\documentclass[a4paper,10pt]{amsart}
\usepackage[utf8]{inputenc}
\usepackage{amsthm}
\usepackage{comment}
\usepackage{amsmath}
\usepackage{bm}
\usepackage{amsfonts}
\usepackage{amssymb, enumitem}
\usepackage{mathtools}
\usepackage{appendix}
\usepackage{mathrsfs}
\usepackage{setspace}
\usepackage{xcolor}
\usepackage{todonotes}
\usepackage{thmtools} 
\usepackage[foot]{amsaddr}
\usepackage[pdfdisplaydoctitle,colorlinks,breaklinks,urlcolor=blue,linkcolor=blue,citecolor=blue]{hyperref} 
\usepackage[nameinlink,capitalise]{cleveref}

\newcommand{\C}{\mathbb{C}}

\newcommand{\HH}{\mathcal{H}}

\newcommand{\N}{\mathbb{N}}

\newcommand{\R}{\mathbb{R}}

\newcommand{\T}{\mathbb{T}}


\let\div\relax

\DeclareMathOperator{\div}{div}

\renewcommand{\epsilon}{\varepsilon}
\renewcommand{\setminus}{\smallsetminus}

\newcommand{\one}{\bm{1}}


\newtheorem{theorem}{Theorem}[section]
\newtheorem{definition}[theorem]{Definition}
\newtheorem{hypothesis}[theorem]{Hypothesis}
\newtheorem{corollary}[theorem]{Corollary}
\newtheorem{lemma}[theorem]{Lemma}
\newtheorem{proposition}[theorem]{Proposition}

\theoremstyle{remark}
\newtheorem{remark}[theorem]{Remark}

\numberwithin{equation}{section}

\newcommand{\angH}{\omega_{H^{\infty}}}
\newcommand{\p}{\mathbb{P}}
\newcommand{\Tor}{\mathbb{T}}
\newcommand{\Do}{\mathsf{D}}
\newcommand{\Dom}{\mathcal{O}}
\newcommand{\Ld}{L_{q,{\rm D}}}
\newcommand{\Lr}{L_{q,{\rm R}}}
\newcommand{\Ldd}{\ell_{q,{\rm D}}}
\newcommand{\Lrd}{\ell_{q,{\rm R}}}
\newcommand{\Lt}{\ell_{q,{\rm P}}}
\newcommand{\rsec}{\mathcal{R}}
\newcommand{\Ls}{\mathbb{L}}
\newcommand{\Ws}{\mathbb{W}}
\newcommand{\Hs}{\mathbb{H}}
\newcommand{\dd}{\mathrm{d}}
\newcommand{\embed}{\hookrightarrow}
\newcommand{\NM}{\mathcal{N}}
\allowdisplaybreaks



\title[Navier-Stokes Equations with Boundary Noise]{Global well-Posedness and Interior Regularity\\ of 2D Navier-Stokes Equations with\\ Stochastic Boundary Conditions}
\author{Antonio Agresti}
\address{Institute of Science and Technology Austria (ISTA), Am Campus 1, 3400 Klosterneuburg, Austria}
\email{\href{mailto:antonio.agresti92 at gmail.com}{antonio.agresti92 at gmail.com}}
\author{Eliseo Luongo}
\address{Scuola Normale Superiore, Piazza dei Cavalieri, 7, 56126 Pisa, Italia}
\email{\href{mailto:eliseo.luongo at sns.it}{eliseo.luongo at sns.it}}

\thanks{The first author has received funding from the European Research Council (ERC) under the Eu\-ropean Union’s Horizon 2020 research and innovation programme (grant agreement No 948819) \includegraphics[height=0.4cm]{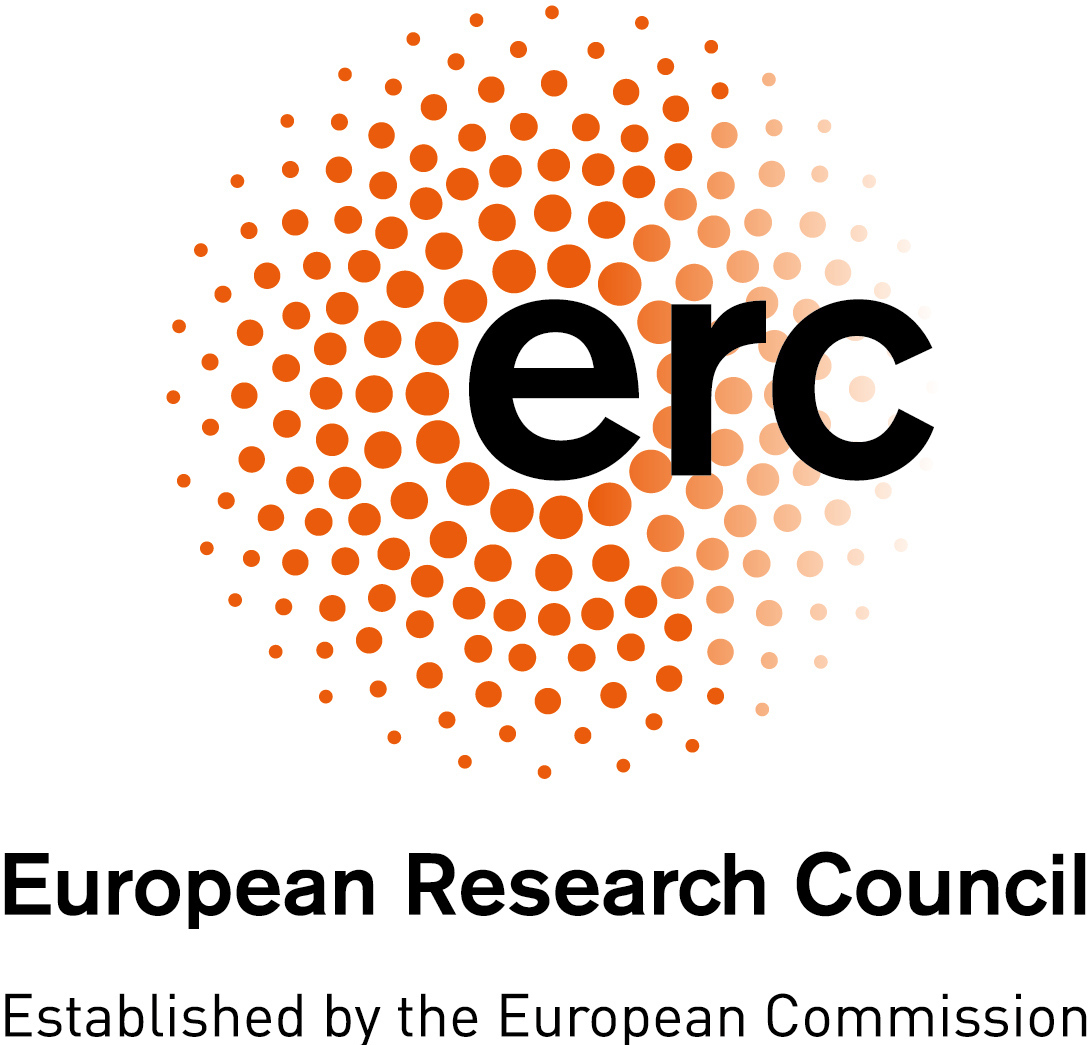}\,\includegraphics[height=0.4cm]{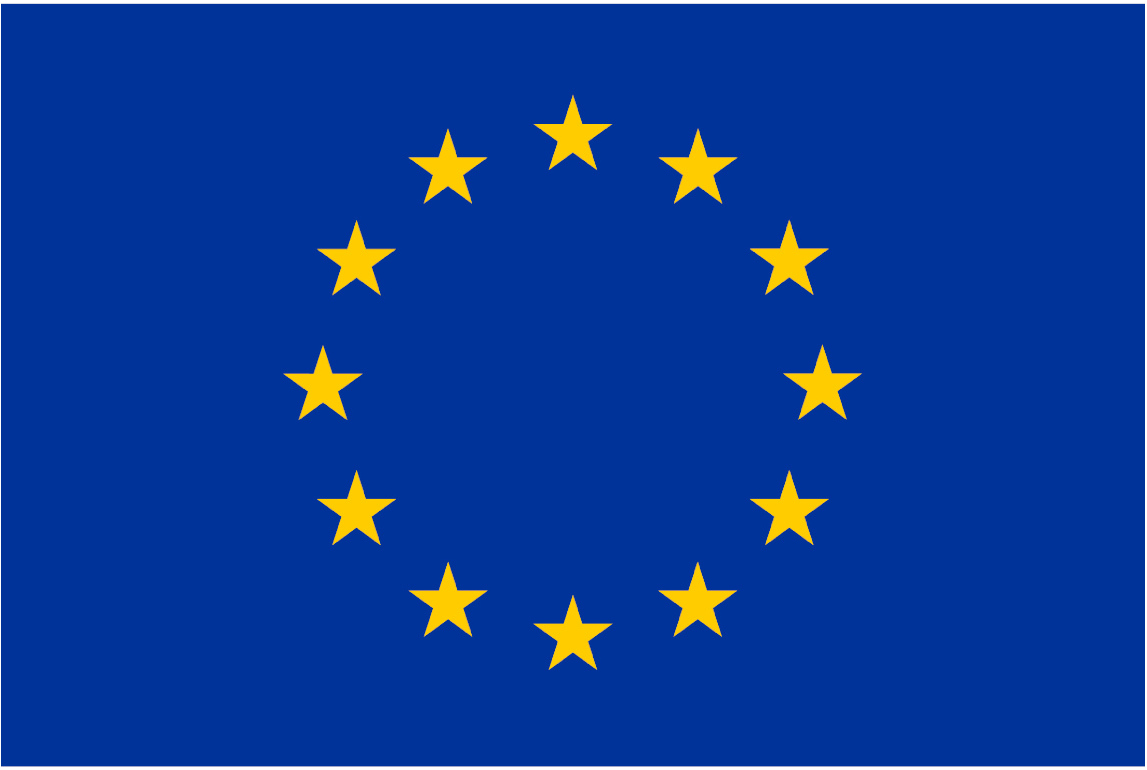}.}

\keywords{Navier-Stokes Equations, Stochastic Boundary Conditions, Stochastic Maximal $L^p$-regularity}
\subjclass{60H15, 76D03 (47A60, 35J25)}
\date\today

\begin{document}

\begin{abstract}
The paper is devoted to the analysis of the global well-posedness and the interior regularity of the 2D Navier-Stokes equations with inhomogeneous stochastic boundary conditions. The noise, white in time and coloured in space, can be interpreted as the physical law describing the driving mechanism on the atmosphere-ocean interface, i.e. as a balance of the shear stress of the ocean and the horizontal wind force.
\end{abstract}

\maketitle

\tableofcontents

\section{Introduction}\label{sec:introduction}
Partial differential equations with boundary noise have been introduced by Da Prato and Zabczyck in the seminal paper \cite{da1993evolution}. They showed that, also in the one dimensional case, the solutions of the heat equation with white noise Dirichlet or Neumann Boundary conditions have low regularity compared to the case of noise diffused inside the domain. In particular, in the case of Dirichlet boundary conditions the solution is only a distribution. Some improvements in the analysis of the interior regularity of the solutions of these problems and some nonlinear variants have been obtained exploiting specific properties of the heat kernel and of suitable nonlinearities. For some results in this direction we refer to \cite{alos2002stochastic,Zanella,debussche2007optimal,fabbri2009lq,Goldys23}. All these issues make the problem of treating non-linear partial differential equations with boundary noise coming from fluid dynamical models an, almost untouched, field of open problems. 

Throughout the manuscript we fix a finite time horizon $T>0$. Let $a>0$, $\Dom=\mathbb{T}\times(0,a)$ and let $\mathbb{T}$ be the one dimensional torus. Finally, 
we denote by 
\begin{equation}
\Gamma_b=\mathbb{T}\times\{0\}\quad\text{ and }\quad\Gamma_u=\mathbb{T}\times\{a\},
\end{equation} 
the bottom and the upper part of the boundary of $\Dom$, respectively. 

In this paper we are interested in the global well-posedness and the interior regularity of the 2D Navier-Stokes equations with boundary noise for the unknown velocity field $u(t,\omega,x,z)=(u_1,u_2):\R_+\times \Omega\times \Dom \to \R^2$, formally written as
\begin{equation}\label{Intro equation}
\left\{
\begin{aligned}
\partial_{t}u+u\cdot\nabla u+\nabla P  & =\Delta u, \qquad & \text{ on }&(0,T)\times\Dom, \\
\operatorname{div}u  & =0, & \text{ on }&(0,T)\times\Dom,  \\ 
u    & = 0,& \text{ on }&(0,T)\times\Gamma_b,\\
\partial_{z}u_1    & =h_b \dot{W}_{\mathcal{H}}, & \text{ on }&(0,T)\times\Gamma_u,\\
u_2   & =0, & \text{ on }&(0,T)\times\Gamma_u, \\ 
u(  0)    & =u_0, & \text{ on }&\Dom, 
\end{aligned}    \right.
\end{equation}
where $\nabla u=(\partial_j u_i)_{i,j=1}^2$, $W_{\mathcal{H}}(t)$ is a $\mathcal{H}$-cylindrical Brownian motion and $h_b(t,x)$ is a sufficiently regular forcing term; we refer to \Cref{sec:main results} below for the  the relevant assumptions and definitions.
To the best of our knowledge this is the first instance of a \emph{global} well-posedness result for a fluid dynamical system driven by stochastic white in time boundary conditions. We refer to \cite{bessaih2014homogenization,bessaih2016homogenization} for some homogenization results in the case of Navier-Stokes equations with dynamic boundary conditions driven by a stochastic forcing and to \cite{binz2020primitive} for the local analysis of the three dimensional primitive equations with boundary noise. Finally, we refer to \cite{dalibard2009asymptotic,dalibard2009mathematical} for some limit behaviors of the model \eqref{Intro equation} with $h_b \dot{W}_{\mathcal{H}}$ replaced by a highly oscillating and regular stationary random field.


Following the books by Pedlosky \cite{pedlosky1996ocean,pedlosky2013geophysical} and Gill \cite{gill1982atmosphere}, the model \eqref{Intro equation} is a good idealization of the velocity of the fluid in the ocean. 
In this scenario, the domain $\Dom=\T \times (0,a)$ can be considered a vertical slice of the ocean with depth $a>0$ and we should interpret $u_1$ (resp. $u_2$) as the horizontal (resp. vertical) component of the velocity field $u$.
Indeed even if, in principle, one should consider a free surface, instead of $\Gamma_u=\T\times \{a\}$, depending on the time, the approximation of such surface as independent of the time, although highly unrealistic, is justified by the fact that the behavior of the fluid around the surface is in general very turbulent. Hence, as emphasized in \cite{desjardins1999homogeneous}, only a modelization is tractable and meaningful. The stochastic boundary condition appearing in \eqref{Intro equation} is interpreted as the physical law describing the driving mechanism on the atmosphere-ocean interface, i.e. as a balance of the shear stress of the ocean and the horizontal wind force, see \cite{lions1993models} for details.

\subsection{Main Results}\label{sec:main results}
We begin by introducing some notation. 
Consider a complete filtered probability space $(\Omega,\mathcal{F},(\mathcal{F}_t)_{t\geq0},\mathbf{P})$, a separable Hilbert space $\mathcal{H}$ and a cylindrical $\mathcal{F}-$Brownian motion $(W_{\HH}(t))_{t\geq 0}$ on $\mathcal{H}$. We say that a process $\Phi$ is $\mathcal{F}$-progressive measurable if $\Phi|_{(0,t)\times \Omega}$ is $\mathcal{F}_t\times \mathcal{B}((0,t))$-measurable for all $t>0$, where $\mathcal{B}$ denotes the Borel $\sigma$-algebra. For the relevant notation on function spaces, we refer to \Cref{sss:notation}.

\begin{hypothesis}\label{assumptions boundary noise}
Let 
$q>2,\ p>2q,\  \alpha\in [0,\frac{1}{q}-\frac{2}{p})$ be such that:
\begin{equation*}
 -\frac{1}{q}-\alpha+\frac{1}{2}> 0.
\end{equation*}
Assume that $h_b:(0,T)\times \Omega \to W^{-\alpha,q}(\Gamma_u;\mathcal{H})$ is a $\mathcal{F}$-progressively measurable process with $\mathbf{P}-a.s.$\ paths in 
$
L^p(0,T;W^{-\alpha,q}(\Gamma_u;\mathcal{H})) .
$
\end{hypothesis}
\begin{remark}\label{rem: example noise}
\autoref{assumptions boundary noise} is for instance satisfied if 
     $q>2$, $   p>2q>4 $ and $ \alpha=0$.
    Note that the case $q=2$ considered in \cite{da1993evolution} is not allowed in \autoref{assumptions boundary noise}. 
\end{remark}

Following the idea of \cite{da2002two} we split the analysis of \eqref{Intro equation} in two parts. First we consider the stochastic linear problem with non-homogeneous boundary conditions
\begin{equation}
\left\{
\label{Linear stochastic}
\begin{aligned}
\partial_{t}w+\nabla \rho  & =\Delta w, \qquad\quad  &\text{ on }&(0,T)\times \Dom,\\
\operatorname{div}w  & =0, &\text{ on }&(0,T)\times \Dom,\\
w    & = 0, &\text{ on }&(0,T)\times \Gamma_b,\\
\partial_{z}w_1  & =h_b\dot{ W}_{\mathcal{H}}, &\text{ on }&(0,T)\times \Gamma_u,\\
w_2   & =0, &\text{ on }&(0,T)\times \Gamma_u, \\ 
w (0) & =0, &\text{ on }&\Dom,
\end{aligned}
\right.
\end{equation}
The solution to the above linear equation \eqref{Linear stochastic} can be treated in mild form as in \cite{da1993evolution,da1996ergodicity}. Secondly, denoting by $v=u-w$ we will consider the Navier-Stokes equations with random coefficients 
\begin{equation}
\left\{
\begin{aligned}
\partial_{t}v+(v+w)\cdot\nabla(v+w)+\nabla(P-\rho)  & =\Delta v, \quad &\text{ on }&(0,T)\times \Dom,\\
\operatorname{div}v  & =0,&\text{ on }&(0,T)\times \Dom,\\
v   & = 0, &\text{ on }&(0,T)\times \Gamma_b,\\
\partial_{z}v_1   & =0, &\text{ on }&(0,T)\times \Gamma_u,\\
v_2    & =0, &\text{ on }&(0,T)\times \Gamma_u, \\ 
v(0)   & =u_0, &\text{ on }&\Dom .  
\end{aligned}
\right.
\label{modified NS}   
\end{equation}
As discussed in \cite[Chapter 13]{da1996ergodicity}, if $h_b$, $u_0$, $W_{\mathcal{H}}(t)$ would be regular enough, then $u=v+w$ will be a classical solution of the Navier-Stokes equations with inhomogeneous boundary conditions \eqref{Intro equation}. 

To state our first result, we introduce some more notation. Here and below, we denote by $H$ (resp. $V$, $\mathbb{L}^4$) the space $L^2(\Dom;\R^2)$ (resp. $H^1(\Dom;\R^2)$, $ L^4(\Dom;\R^2)$) of divergence free vector fields adapted to our framework, introduced rigorously in \Cref{subsec:stokes operator }.
\begin{definition}\label{def z weak sol}
A process $u$ with paths $\mathbf{P}-a.s.$ in $C([0,T];H)\cap L^4(0,T;\mathbb{L}^4)$ and progressively measurable with respect to these topologies, is a pathwise weak solution of \eqref{Intro equation} if $u=v+w$, where $w$ has paths in $ C(0,T;H)\cap L^4(0,T;\Ls^4(\Dom))$, it is progressively measurable with respect to these topologies and is a mild solution of \eqref{Linear stochastic} while $v$ has paths in $C(0,T;H)\cap L^2(0,T;V)$, it is progressively measurable with respect to these topologies and is a weak solution of \eqref{modified NS}. 
\end{definition}

The first main result of this paper reads as follows.

\begin{theorem}[Global well-posedness]
\label{main thm}
Let \autoref{assumptions boundary noise} be satisfied. Then for all $u_0\in H$ there exists a unique weak solution $u$  to \eqref{Intro equation} in the sense of \autoref{def z weak sol}.
\end{theorem}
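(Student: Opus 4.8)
The plan is to follow the splitting $u = v+w$ already encoded in \autoref{def z weak sol}, treating the linear stochastic part and the random-coefficient Navier--Stokes part separately, and then combining. First I would establish the existence, uniqueness and regularity of the mild solution $w$ of \eqref{Linear stochastic}. The natural approach is to reduce the inhomogeneous Neumann-type boundary condition to an abstract evolution equation in $H$ by introducing a Dirichlet--Neumann type lifting (Green) map $\mathcal{N}$ associated with the Stokes operator, so that $w$ solves $\dd w = A w\,\dd t + (\lambda - A)\mathcal{N} h_b \,\dd W_\HH$ in mild form, $w(t) = \int_0^t (\lambda-A)S(t-s)\mathcal{N} h_b(s)\,\dd W_\HH(s)$. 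The key quantitative input is the mapping property of $\mathcal{N}$ on the negative-order Sobolev boundary data together with the analyticity and smoothing of the Stokes semigroup $S(t)$; combined with the integrability index $p>2q$ and the constraint $-\tfrac1q-\alpha+\tfrac12>0$ from \autoref{assumptions boundary noise}, a stochastic-maximal-$L^p$-regularity / factorization argument (Da Prato--Kwapie\'n--Zabczyk style) yields $w \in C([0,T];H)\cap L^4(0,T;\Ls^4(\Dom))$ $\mathbf{P}$-a.s., with progressive measurability. I expect this to be where the precise exponent bookkeeping lives, but it should be essentially a citation to stochastic maximal regularity plus the boundary-lifting estimates set up earlier in the paper.

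Second, conditionally on a fixed realization of $w$ with the above path regularity, I would solve the deterministic (random-coefficient) problem \eqref{modified NS} for $v$. Writing the equation for $v$ after projecting onto divergence-free fields, the new terms compared to deterministic 2D Navier--Stokes are the ``perturbation'' terms $v\cdot\nabla w + w\cdot\nabla v + w\cdot\nabla w$. The standard 2D energy method applies: test with $v$, use $\langle v\cdot\nabla v, v\rangle = 0$ and $\langle w\cdot\nabla v, v\rangle = 0$ (the latter using $\operatorname{div} w = 0$ and the boundary conditions, which is why the function spaces $H$, $V$ were tailored in \Cref{subsec:stokes operator }), and bound the remaining trilinear terms by $\|w\|_{\Ls^4}\|v\|_{\Ls^4}\|\nabla v\|_H$ and $\|w\|_{\Ls^4}^2\|\nabla v\|_H$ via Ladyzhenskaya's inequality $\|v\|_{\Ls^4}^2 \lesssim \|v\|_H\|\nabla v\|_H$ in 2D. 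Absorbing the $\|\nabla v\|_H$ factors and invoking Gronwall with the integrable-in-time quantity $\|w(t)\|_{\Ls^4}^4$ gives the a priori bound $v \in C([0,T];H)\cap L^2(0,T;V)$. Existence then follows by a Galerkin scheme with these estimates plus a standard compactness (Aubin--Lions) and limit-passage argument, and uniqueness follows from a difference estimate $v_1-v_2$ of the same type, again using the 2D trilinear bounds and Gronwall.

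Third, I would glue the two pieces: set $u=v+w$, check that it has the regularity $C([0,T];H)\cap L^4(0,T;\Ls^4)$ required by \autoref{def z weak sol} (note $v\in L^2(0,T;V)\embed L^4(0,T;\Ls^4)$ in 2D, so both summands lie in $L^4(0,T;\Ls^4)$), and verify the joint progressive measurability --- the latter because the solution map $w\mapsto v$ is (pathwise) continuous/deterministic and $w$ is progressively measurable, so the composition is adapted. Uniqueness of $u$ reduces to uniqueness of $w$ (already established as a mild solution, which is unique) and uniqueness of $v$ given $w$.

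The main obstacle, I expect, is the first step: obtaining enough path regularity for $w$ --- specifically $w\in L^4(0,T;\Ls^4)$ --- from boundary data that is only in the negative-order space $W^{-\alpha,q}(\Gamma_u;\HH)$. This is exactly the low-regularity phenomenon flagged in the introduction (à la Da Prato--Zabczyk), and it is the reason for the somewhat delicate hypotheses $q>2$, $p>2q$, $\alpha<\tfrac1q-\tfrac2p$, $\tfrac12-\tfrac1q-\alpha>0$: one needs $w$ to gain enough spatial integrability through the parabolic smoothing to land in $\Ls^4$ while keeping time integrability $L^4$, which forces a careful trade-off in the factorization/maximal-regularity estimate. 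Everything downstream (the nonlinear step) is then a robust, essentially classical 2D Navier--Stokes argument made to tolerate an $L^4_tL^4_x$ drift $w$.
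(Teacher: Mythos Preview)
Your proposal is correct and follows the same three-step architecture as the paper: regularity of $w$ via stochastic maximal regularity and the Neumann lifting map, pathwise well-posedness of $v$ via 2D energy estimates, and gluing with measurability via continuous dependence of $v$ on $w$. Two implementation choices differ from the paper and are worth flagging. First, for the linear step the paper does not use a Da~Prato--Kwapie\'n--Zabczyk factorization but rather the van~Neerven--Veraar--Weis stochastic maximal $L^p$-regularity theorem, which is available because the Stokes operator $A_q$ has bounded $H^\infty$-calculus on $\Ls^q$; this is precisely why the non-Hilbertian setting $q>2$ is needed, and the exponent bookkeeping you anticipate reduces to choosing $\theta\in[0,\tfrac12)$ so that $H^{\theta,p}(0,T;\Do(A_q^{\frac{1}{2q}-\frac{\alpha}{2}-\theta-\epsilon}))$ embeds into $C([0,T];H)$ and $L^4(0,T;\Ls^4)$. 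Second, for existence of $v$ the paper does not run a Galerkin scheme with Aubin--Lions compactness; instead it builds a Picard iteration $v^{n+1}$ solving the \emph{classical} Navier--Stokes system with forcing $-B(v^n,w)-B(w,v^n)-B(w,w)$, invoking the already-established well-posedness of homogeneous 2D Navier--Stokes as a black box, and shows the sequence is Cauchy in $C([0,\overline T];H)\cap L^2(0,\overline T;V)$ on short intervals. Your Galerkin route is equally valid and arguably more standard; the paper's route trades a compactness argument for a contraction argument and gets continuous dependence on $(u_0,w)$ essentially for free, which it then uses (as you do) to transfer progressive measurability from $w$ to $v$.
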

According to \autoref{rem: example noise}, the introduction of the non-Hilbertian setting is necessary in order to prove \autoref{main thm} above, at least with the tools introduced in this article.

\begin{remark}[Additional bulk forces]
\label{additional bulk forces}
    Without additional difficulties we could also consider in equation \eqref{Intro equation} an additive noise diffused inside the domain of the form $h_d(t) \, \dd\widetilde{W}_{\mathcal{H}}(t)$, where $\widetilde{W}_{\mathcal{H}}$ is a cylindrical Brownian motion on $\mathcal{H}$ independent of $W_{\mathcal{H}}$ and $h_d:(0,T)\times \Omega\rightarrow \gamma(\mathcal{H}, {X_{-\lambda,A_q}} )$ is a progressively measurable process with paths $\mathbf{P}-a.s.$ in $L^p\left(0,T;\gamma\left(\mathcal{H},{X_{-\lambda,A_q}}\right)\right)$, with $p>2,\ q\geq 2,\ \lambda\in [0,\frac{1}{2}) $ such that $1-\frac{2}{p}-2\lambda>0$ and there exists $\theta \in [0,\frac{1}{2})$  satisfying
    \begin{align*}
        \theta \leq \frac{3}{4}-\lambda-\frac{1}{q},\qquad\theta\geq \frac{1}{p}-\frac{1}{4}.
    \end{align*} 
    The case $q=p=2$ and $\lambda=0$ is also allowed, see \cite[Chapter 5]{da2014stochastic}.
    Here $A_q$ and $\gamma$ stands for the Stokes operator on $L^q$ and the class of $\gamma$-radonifying operators, see \Cref{subsec:stokes operator } and  \cite[Chapter 9]{Analysis2}, respectively. {Finally, $X_{-\lambda,A_q}$ is the extrapolated space or order $\lambda$ w.r.t.\ $A_q$ as defined below, see \eqref{eq:duality_fractional_scale}.} 
    To see this, note that, under these assumptions, arguing as in \autoref{regularity stokes} the solution $\widetilde{w}$ to 
    \begin{equation}\label{Linear stochastic diffused}
    \left\{
\begin{aligned}
\partial_t \widetilde{w}+\nabla \widetilde{\rho}   & =\Delta \widetilde{w} +h_d \dot{\widetilde{W}}_{\mathcal{H}},\quad &\text{ on }&(0,T)\times \Dom,\\
\operatorname{div}\widetilde{w}  & =0, &\text{ on }&(0,T)\times \Dom, \\
\widetilde{w}    & = 0, &\text{ on }&(0,T)\times \Gamma_b,\\
\partial_{z}\widetilde{w}_1  & =0, &\text{ on }&(0,T)\times \Gamma_u,\\
\widetilde{w}_2    & =0, &\text{ on }&(0,T)\times \Gamma_u,  \\ 
\widetilde{w}(0)    & =0, &\text{ on }& \Dom,
\end{aligned}
\right.
\end{equation}
can be obtained as a stochastic convolution. In particular, the above assumptions on $h_d$ imply that $\widetilde{w}$ 
is a progressively measurable process with values in $C([0,T];H)\cap L^4(0,T;\mathbb{L}^4)$. Therefore this term adds no difficulties in order to analyze the well-posedness of equation \eqref{modified NS}. For this reason we prefer to not consider this classical source of randomness.
\end{remark}

\begin{remark}[Comparison with the literature]\
\begin{enumerate}[leftmargin=*]
\item\label{paragone berselli}
\autoref{main thm} shares strong similarities with \cite[Theorem 1.2]{berselli2006existence}, which addresses the well-posedness of certain 2D deterministic Navier-Stokes equations with non-homogeneous non-smooth Navier-type boundary conditions. However, it is important to note that our model focuses on a different phenomenon than the one studied in \cite{berselli2006existence}. For this reason, contrary to us, they stress the regularity of the boundary condition of the normal trace of the velocity. From a mathematical viewpoint, the white noise appearing in equation \eqref{Intro equation} is rougher both in time and in space compared to the boundary conditions discussed in \cite{berselli2006existence}. However, as discussed in \cite{da1993evolution}, Neumann boundary conditions are more regular than Dirichlet boundary conditions and allow us to treat rougher inputs. Due to these differences, the two results have different ranges of applicability and do not cover each other. Moreover, the tools introduced here differ significantly from the techniques involved in \cite{berselli2006existence}.
\item \label{paragone primitive}
As discussed in the introduction, the first result in the direction of the analysis of fluid dynamical models with stochastic boundary conditions have been proved in \cite[Theorem 5.1]{binz2020primitive}, where the authors established local well-posedness of 3D primitive equations with boundary noise modeling wind forces. Both their strategy and ours are based on the splitting technique introduced in \cite{da2002two}. After showing suitable regularity properties of the stochastic convolution via stochastic maximal $L^p$-regularity techniques (cf. \autoref{regularity stokes} and \cite[Proposition 4.3]{binz2020primitive}), a thorough analysis of certain nonlinear models is required. In contrast, we conduct this analysis within a suitable Hilbertian framework, enabling us to derive energy estimates essential for establishing the global well-posedness of \eqref{Intro equation} (cf. \autoref{Thm deterministic well--posed modified} and \cite[Section 5.3]{binz2020primitive}). The difference between the global well-posedness result which we are able to obtain and \cite[Theorem 5.1]{binz2020primitive} can be seen as consequence of the fact that the 2D Navier-Stokes equations are globally well-posed in the weak setting, while the same cannot be asserted for the primitive equations (cf.\ \cite{Ju17}). Therefore, in order to prove their local well-posedness result, the authors in \cite{binz2020primitive} need to work with a notion of solution which mixes strong and weak regularity in the space variables. As a byproduct of this fact we are able to consider a noise rougher in space compared to them. Additionally, a minor distinction lies in the boundary conditions applied to the bottom part of the domain $\Gamma_b$. We introduce no-slip boundary conditions to accurately model the bottom of the ocean, a choice with theoretical underpinning in works such \cite{dalibard2009asymptotic, dalibard2009mathematical, gill1982atmosphere, pedlosky1996ocean,pedlosky2013geophysical}. In contrast, \cite{binz2020primitive} considered some form of homogeneous Neumann boundary conditions, a choice related to the functional analytic setup of the primitive equations (cf. \cite[Remark 3.3]{binz2020primitive}). Beyond the distinct justifications from a modeling perspective, our choice leads to differences in the analysis of the corresponding linear elliptic systems (cf. \Cref{subsec Neumann map} and \cite[Section 3.5]{binz2020primitive}).
\end{enumerate}
\end{remark}

Secondly, we are interested in studying the interior regularity of the solution $u$ provided by \autoref{main thm}.

Our second main result reads as follows:
\begin{theorem}[Interior regularity]\label{interior regularity theorem}
Let \autoref{assumptions boundary noise} be satisfied. 
    Let $u$ be the unique weak solution of  \eqref{Intro equation} provided by \autoref{main thm}. Then for all  $t_0\in (0,T)$ and $\Dom_0\subset \Dom $ such that $\mathrm{dist}({\Dom_0}, \partial \Dom)>0$,
    \begin{align*}
        u\in C([t_0,T]; C^{\infty}({\Dom_0};\mathbb{R}^2)) \quad \mathbf{P}-a.s.
    \end{align*}
\end{theorem}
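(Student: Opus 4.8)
The plan is to exploit the decomposition $u=v+w$ of \autoref{def z weak sol} and prove interior smoothness of $w$ and of $v$ separately: for the linear part $w$ — the stochastic convolution — by reducing it pathwise to a homogeneous Stokes system and invoking interior hypoellipticity; for $v$ by a deterministic $2$D parabolic bootstrap, using that $w$ is already interior-smooth. Fix once and for all $t_0\in(0,T)$ and an open $\Dom_0$ with $\mathrm{dist}(\Dom_0,\partial\Dom)>0$, and interpose $\Dom_0\Subset\Dom_1\Subset\Dom$.

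\emph{Linear part.} Although $w$ is only a mild solution of \eqref{Linear stochastic}, it is also an analytically weak solution in the sense of Da Prato--Zabczyk; when tested against a divergence-free $\phi\in C_c^\infty(\Dom_1;\R^2)$ the pressure gradient and the boundary term carrying $h_b\dot W_{\mathcal H}$ both vanish, since $\phi$ and its derivatives are supported away from $\partial\Dom$. Hence no stochastic integral survives, and one obtains the pathwise identity $\langle w(t),\phi\rangle=\langle w(s),\phi\rangle+\int_s^t\langle w(r),\Delta\phi\rangle\,\dd r$ $\mathbf{P}$-a.s., for all $0\le s\le t\le T$. As $w\in C([0,T];H)$ $\mathbf{P}$-a.s.\ (cf.\ \autoref{def z weak sol} and \autoref{regularity stokes}), a separability/continuity argument gives a single $\mathbf{P}$-full event on which this holds for all admissible $\phi$ and all $s\le t$; recovering the pressure, $w(\omega,\cdot)\in L^2((0,T)\times\Dom_1;\R^2)$ is then a distributional solution of the homogeneous non-stationary Stokes system on $(0,T)\times\Dom_1$. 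By the classical interior regularity of that system, $w(\omega,\cdot)\in C^\infty((0,T)\times\Dom_1;\R^2)$, and the accompanying quantitative estimates yield $w\in C([t_0,T];C^\infty(\overline{\Dom_0};\R^2))$ $\mathbf{P}$-a.s. (Alternatively, one keeps $w$ in stochastic form: via the factorization formula and the off-diagonal Gaussian bounds for the Stokes semigroup, $\|A^{k}S(\tau)\mathcal N(g)\|_{C^0(\overline{\Dom_0})}$ decays faster than any power of $\tau$ as $\tau\downarrow0$ for $g\in W^{-\alpha,q}(\Gamma_u;\mathcal H)$ — precisely because $\mathrm{dist}(\Dom_0,\partial\Dom)>0$ — which absorbs the roughness permitted by \autoref{assumptions boundary noise} and runs through the maximal-regularity estimates of \autoref{regularity stokes}; here $\mathcal N$ is the Neumann lifting of \Cref{subsec Neumann map}.)

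\emph{Nonlinear part.} Fix $\omega$ in the full-measure event above on which, in addition, $v(\omega,\cdot)\in C([0,T];H)\cap L^2(0,T;V)$ and $w(\omega,\cdot)\in C([0,T];H)\cap L^4(0,T;\mathbb{L}^4)$ (cf.\ \autoref{def z weak sol}). Testing \eqref{Intro equation} against interior divergence-free functions exactly as above shows that $u(\omega,\cdot)$ is a distributional solution of the $2$D Navier--Stokes equations on $(0,T)\times\Dom_1$, with no stochastic term; in particular the scalar vorticity $\zeta:=\partial_1u_2-\partial_2u_1$ solves there the transport--diffusion equation $\partial_t\zeta+u\cdot\nabla\zeta=\Delta\zeta$, with no pressure present. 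Since $v\in L^2(0,T;V)$ and $w$ is interior-smooth, $\zeta\in L^2((t_0/2,T)\times\Dom_1)$, while $u\in L^4((0,T)\times\Dom)$ since $w$ has this regularity by \autoref{def z weak sol} and $v\in L^4$ by the $2$D Ladyzhenskaya inequality. A localized parabolic bootstrap for this scalar equation — cut off in space and time, strictly away from $\partial\Dom$ and from $t=0$, alternating local parabolic $L^p$- and Schauder estimates — successively improves the regularity of $\zeta$; at each stage the elliptic relation $\Delta u=\nabla^{\perp}\zeta$ transfers the gain to $u$, and the improved $u$ re-enters the transport term. Iterating gives $u\in C^\infty((0,T)\times\Dom_1;\R^2)$ and, quantitatively, $u\in C([t_0,T];C^\infty(\overline{\Dom_0};\R^2))$ $\mathbf{P}$-a.s.; the hypothesis $t_0>0$ reflects exactly the absence of regularity of $u_0\in H$ at the initial time. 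As $t_0$ and $\Dom_0$ were arbitrary, the theorem follows.

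\emph{Main obstacle.} The delicate point is the stochastic-to-pathwise reduction of $w$: one must verify carefully, using the Neumann-map description of \Cref{subsec Neumann map}, that the mild solution of \autoref{regularity stokes} is an analytically weak solution against interior divergence-free test functions — equivalently, that the noise enters the weak formulation only through the boundary integral on $\Gamma_u$ — and that the resulting pathwise Stokes identity holds simultaneously for all such test functions on one $\mathbf{P}$-full event. Once this is in place, interior hypoellipticity of the Stokes system and the classical $2$D parabolic bootstrap are standard. If one instead follows the kernel route, the corresponding obstacle is to establish the super-polynomial interior decay of $\|A^{k}S(\tau)\mathcal N(\cdot)\|_{C^0(\overline{\Dom_0})}$ uniformly as $\tau\downarrow0$ and to carry out the stochastic-convolution estimates in the high-regularity local spaces.
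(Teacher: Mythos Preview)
Your proposal is correct and follows the same two-step strategy as the paper: pass from the mild/stochastic description of $w$ to a pathwise weak formulation in which the boundary noise disappears when tested against compactly supported divergence-free functions, then run a vorticity-based parabolic bootstrap. The paper carries out the first step as an explicit lemma (equivalence of mild and weak solutions for \eqref{Linear stochastic}, \autoref{lemma weak solution linear equivalence}), which is precisely the ``main obstacle'' you flag.

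There are two differences in execution worth noting. For $w$, the paper takes the curl to obtain the scalar \emph{heat equation} for $\omega_w=\operatorname{curl} w$ in the interior and then recovers $w$ from $\Delta w=\nabla^\perp\omega_w$ by iterated elliptic regularity; you instead invoke interior hypoellipticity of the non-stationary Stokes system directly. Both are valid; the paper's route avoids citing Stokes interior theory as a black box. For the nonlinear step, the paper bootstraps on $\omega=\operatorname{curl} v$, whose equation carries forcing terms of the form $\operatorname{div}\operatorname{curl}(w\otimes w+w\otimes v+v\otimes w)$ that must be controlled using the already-established interior smoothness of $w$; your choice to work with $\zeta=\operatorname{curl} u$ directly yields the clean 2D vorticity equation $\partial_t\zeta+u\cdot\nabla\zeta=\Delta\zeta$ with no forcing, which is a genuine simplification (you still need the interior smoothness of $w$ to place $\zeta$ in $L^2_{\mathrm{loc}}$ at the start, as you say). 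One small point you do not address: regularity up to and including $t=T$. The paper obtains this by extending $h_b$ by zero on $[T,T+1]$, solving on the enlarged interval, and restricting; your bootstrap as stated only delivers smoothness on $[t_0,T)$.
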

According to \cite{serrin1961interior} (see also \cite[Section 13.1]{lemarie2018navier}), it seems not possible to gain high-order interior time-regularity for the Navier-Stokes problem. This fact is in contrast to the case of the heat equation with white noise boundary conditions, see \cite{brzezniak2015second}. The reason behind this is the presence of the unknown pressure $P$ which, due to its non-local nature, provides a connection between the interior and the boundary  regularity. Finally, let us mention that other techniques to bootstrap further interior space regularity (e.g.\ analyticity), such as the `parameter' trick (see \cite{parameter1,parameter2} and \cite[Subsection 9.4]{pruss2016moving}), seem not to work due to the presence of the noise on $\Gamma_u$.
Similarly to the proof of \autoref{main thm}, we analyze the interior regularity of $u$ combining the interior regularity of $w$ and the interior regularity of $v$. The interior regularity of $w$ is obtained introducing a proper weak formulation, see \autoref{weak solution linear} below. Instead the regularity of $v$ is analyzed via a Serrin's argument exploiting the aforementioned regularity of $w$.

\smallskip

The paper is organized as follows. In \Cref{sec preliminaries} we will introduce the functional setting in order to deal with problem \eqref{Intro equation}. In particular, we will introduce the corresponding of the classical spaces and operator needed to deal with Navier-Stokes equations with no-slip boundary condition to this more involved set of boundary conditions. Indeed, the Stokes operator associated to our problem generates an analytic semigroup which admits an $H^{\infty}$-calculus of angle strictly less than $\frac{\pi}{2}$ also in the non-Hilbertian setting. This is crucial in order to apply the Stochastic maximal $L^p$-regularity results of \cite{VNeerven}, recalled in \Cref{sec: stoch maximal regularity}.
The proof of \autoref{main thm} is the object of \Cref{sec well posed}. In particular, in \Cref{sec regularity mild stokes} we will consider the linear problem \eqref{Linear stochastic}, while in \Cref{sec regularity nonlinear auxiliary} we will consider the nonlinear problem \eqref{modified NS}. The proof of \autoref{interior regularity theorem} is the object of \Cref{sec:interior reg}. In particular, in \Cref{sec interior regularity mild stokes} we will study the interior regularity of the solution of the linear problem \eqref{Linear stochastic}, while in \Cref{sec interior regularity nonlinear auxiliary} we will consider the nonlinear problem \eqref{modified NS}. We postpone some technical proofs related to the properties of the Stokes operator in the \Cref{appendinx H infty calulus}.

\subsubsection{Notation} 
\label{sss:notation}
Here we collect some notation which will be used throughout the paper. Further notation will be introduced where needed.
By $C$ we will denote several constants, perhaps changing value line by line. If we want to keep track of the dependence of $C$ from some parameter $\xi$ we will use the symbol $C(\xi)$. Sometimes we will use the notation $a \lesssim b$ (resp.\ $a\lesssim_\xi b$), if it exists a constant such that $a \leq C b$ (resp.\ $a\leq C(\xi) b$).

Fix $q\in (1,\infty)$. For an integer $k\geq 1$, $W^{k,q}$ denotes the usual Sobolev spaces. 
In the non-positive and non-integer case $s\in (-\infty,\infty)\setminus \N$, we let $W^{s,q}:=B^s_{q,q}$ where $B^{s}_{q,q}$ is the Besov space with smoothness $s$, and integrability $q$ and microscopic integrability $q$ (in particular $W^{0,q}\neq L^q$). Moreover, $H^{s,q}$ denotes the Bessel potential spaces.
Both Besov and Bessel potential spaces can be defined by means of Littlewood-Paley decompositions and restrictions (see e.g.\ \cite{ST87}, \cite[Section 6]{Saw_Besov}) or using the interpolation methods starting with the standard Sobolev spaces $W^{k,q}$ (see e.g.\ \cite[Chapter 6]{BeLo}). Finally, we set $\mathcal{A}^{s,q}(D;\R^d):=(\mathcal{A}^{s,q}(D))^d$ for an integer $d\geq 1$, a domain $D$ and $\mathcal{A}\in \{W,H\}$.

Let $\mathcal{K}$ and $Y$ be a Hilbert and a Banach space, respectively. We denote by $\gamma(\mathcal{K},Y)$ the set of $\gamma$-radonifying operators, see e.g.\ \cite[Chapter 9]{Analysis2} for basic definitions and properties. If $Y$ is Hilbert, then $\gamma(\mathcal{K},Y)$ coincides with the class of Hilbert-Schmidt operator from $\mathcal{K}$ to $Y$.
Below, we need the following Fubini-type result:
\begin{equation*}
\mathcal{A}^{s,q}(D;\mathcal{K})=\gamma(\mathcal{K},\mathcal{A}^{s,q}(D))\ \ \text{ for all }s\in\R,\ q\in (1,\infty), \ \mathcal{A}\in \{W,H\}.    
\end{equation*}
The above follows from \cite[Theorem 9.3.6]{Analysis2} and interpolation.

\section{Preliminaries}\label{sec preliminaries}
\subsection{The Stokes operator and its spectral properties}\label{subsec:stokes operator }
In this section we introduce the functional analytic setup in order to define all the object necessarily in the following. In order to improve the readability of the results we will just state the main results on the Stokes operator postponing the proofs to \Cref{appendinx H infty calulus}.

Throughout this subsection we let $q\in (1,\infty)$. Recall that $\Dom=\Tor\times (0,a)$ where $a>0$.
We begin by introducing the Helmholtz projection on $L^q(\Dom;\R^2)$, see e.g.\ \cite[Subsection 7.4]{pruss2016moving}. Let $f\in L^q(\Dom;\R^2)$ and let $\psi_f\in W^{1,q}(\Dom)$ be the unique solution to the following elliptic problem
\begin{equation}
\label{eq:psi_f_problem}
\left\{
\begin{aligned}
\Delta  \psi_f &= \div f\quad &\text{ on }&\Dom,\\
\partial_n  \psi_f &= f\cdot n   &\text{ on }&\Gamma_u \cup \Gamma_b.
\end{aligned}
\right.
\end{equation}
Here $n$ denotes the exterior normal vector field on $\partial\Dom$. Of course, the above elliptic problem is interpret in its natural weak formulation:
\begin{equation}
\int_{\Dom} \nabla \psi_f \cdot \nabla \varphi \,\dd x\dd z = \int_{\Dom} f\cdot \nabla \varphi\,\dd x \dd z\ \  \text{ for all }    \varphi\in C^{\infty}(\Dom).
\end{equation}
By \cite[Corollary 7.4.4]{pruss2016moving} , we have $\psi_f\in W^{1,q}(\Dom)$ and $\|\nabla \psi_f\|_{L^{q}(\Dom;\R^2)}\lesssim \|f\|_{L^q(\Dom;\R^2)}$ (the proof of such estimate can also be obtained by the Lax-Milgram theorem in Banach spaces \cite[Theorem 1.1]{KY13_LaxMilgram}, see also the proof of \autoref{Regularity of the Neumann map} below). Then the Helmholtz projection is given by $\p_q$ is defined as 
\begin{equation*}
\p_q f= f- \nabla \psi_f, \quad f\in L^q(\Dom;\R^2).    
\end{equation*}

Next we define the Stokes operator on $L^q(\Dom;\R^2)$. For convenience of notation, we actually define $A_q$ as minus the Stokes operator so that $A_q$ is a positive operator for $q=2$ (i.e.\ $\langle A_2 u,u \rangle\geq 0$ for all $u\in \Do(A_2)$). Let $\Ls^q:=\p(L^q(\Dom;\R^2))$. Then, we define the operator $A_q:\Do(A_q)\subseteq \Ls^q\to \Ls^q$ where
\begin{align*}
\Do(A_q)= \big\{f=(f_1,f_2)\in W^{2,q}(\Dom;\mathbb{R}^2)\cap \Ls^q\,:\,
\ &   f|_{\Gamma_b}=0, \\  
&   f_2 |_{\Gamma_u}=\partial_z f_1|_{\Gamma_u}=0\big\},
\end{align*}
and  $A_q u=-\p_q \Delta u$ for $u\in \Do(A_q)$.

In the main arguments we need stochastic maximal $L^q$-regularity estimates for stochastic convolutions. By \cite{VNeerven} (see also \cite{AV20,NVVW15_survey}), it is enough to show the boundedness of the $H^{\infty}$-calculus for $A_q$. For the main notation and basic results on the $H^{\infty}$-calculus we refer to \cite[Chapters 3 and 4]{pruss2016moving} and \cite[Chapter 10]{Analysis2}. 

In the following, we let 
\begin{equation*}
\Hs^{s,q}(\Dom):=H^{s,q}(\Dom;\R^2)\cap \Ls^q, \ \ s\in \R.    
\end{equation*}

\begin{theorem}[Boundedness $H^{\infty}$-calculus]
\label{t:bounded_H_infty}
For all $q\in (1,\infty)$, the operator $A_q$ is invertible and has a bounded $H^{\infty}$-calculus of angle $<\frac{\pi}{2}$.
Moreover the domain of the fractional powers of $A_q$ is characterized as follows:
\begin{enumerate}[leftmargin=*]
    \item $\Do(A_q^{s})= \Hs^{2s,q}(\Dom)$ if $0\leq s<\frac{1}{2q}$.
\vspace{0.1cm}
    \item  
    $\Do(A_q^{s})=
\big\{f\in \Hs^{2s,q}(\Dom)\,|\, f|_{\Gamma_b} =0,\ f_2|_{\Gamma_u}=0 \big\}
$ if $\frac{1}{2q}<s<\frac{1}{2}+\frac{1}{2q}$.
\vspace{0.1cm}
\item  
$
\Do(A_q^{s})=
\big\{f\in \Hs^{2s,q}(\Dom)\,|\, f|_{\Gamma_b} =0,\ f_2|_{\Gamma_u}=\partial_z f_1|_{\Gamma_u}=0 \big\}
$ if $\frac{1}{2}+\frac{1}{2q}<s<1$.
\end{enumerate}
\end{theorem}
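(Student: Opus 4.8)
The plan is to establish the bounded $H^\infty$-calculus for $A_q$ by comparing it with the Laplacian equipped with the corresponding mixed boundary conditions, for which $H^\infty$-calculus results are available, and then to identify the domains of fractional powers via interpolation and trace theory. First I would work on the full (non-divergence-free) scale: consider the realization $L_q$ of $-\Delta$ on $L^q(\Dom;\R^2)$ acting componentwise, with the boundary conditions built into the domain — namely $f|_{\Gamma_b}=0$ and on $\Gamma_u$ the condition $f_2=0$, $\partial_z f_1=0$. This is a normal-Dirichlet / tangential-Neumann mixed problem for a vector Laplacian on the cylinder $\Tor\times(0,a)$. Because $\Tor\times(0,a)$ is (up to the torus direction) an interval, one can separate variables: on the torus the Laplacian in $x$ is a Fourier multiplier, and in $z\in(0,a)$ one has a one-dimensional problem with Dirichlet at $0$ and either Dirichlet ($f_2$) or Neumann ($f_1$) at $a$. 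Standard results (e.g.\ \cite[Chapters 3, 6, 7]{pruss2016moving}, or via the Dore--Venni / Kalton--Weis machinery) give a bounded $H^\infty$-calculus of angle $0$ for such operators, and in particular invertibility, since the bottom Dirichlet condition excludes zero from the spectrum. The details of this step are routine but somewhat lengthy and are exactly what I expect to be postponed to \Cref{appendinx H infty calulus}; the main technical point is handling the mixed boundary condition on $f_1$ (Dirichlet at $z=0$, Neumann at $z=a$), which is still a standard elliptic boundary value problem with the Lopatinskii--Shapiro condition satisfied.

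Next I would transfer this to the divergence-free setting. The Helmholtz projection $\p_q$ is bounded on $L^q(\Dom;\R^2)$ by the elliptic estimate already recalled in the text, and one checks that $\p_q$ commutes with the resolvents of $L_q$ on the relevant invariant subspace — equivalently, that $\p_q$ maps $\Do(L_q)$ into $\Do(A_q)$ and $A_q = \p_q L_q$ on $\Ls^q \cap \Do(L_q)$, using that the pressure gradient $\nabla\psi_f$ is curl-free and that the boundary conditions are compatible with the splitting (this is where the precise form $\Gamma_u$: $u_2=0$, $\partial_z u_1=0$ matters, so that $\p_q$ preserves the boundary conditions — a point that, as the authors note, differs from the Neumann-type setup of \cite{binz2020primitive}). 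Since $A_q$ is the restriction of $L_q$ to the complemented subspace $\Ls^q$ and the projection commutes appropriately, the bounded $H^\infty$-calculus of angle $<\pi/2$ (indeed angle $0$) descends from $L_q$ to $A_q$, and invertibility is inherited as well. The hard part of the whole argument is really this compatibility between $\p_q$ and the mixed boundary conditions — one must verify that a divergence-free field satisfying $\partial_z u_1|_{\Gamma_u}=0$ has its projection still in the domain, and conversely, which requires a careful look at the elliptic problem \eqref{eq:psi_f_problem} and its regularity.

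Finally, for the characterization of $\Do(A_q^s)$ I would use the identity $\Do(A_q^s) = [\,\Ls^q, \Do(A_q)\,]_{s}$ (complex interpolation), valid because $A_q$ has bounded imaginary powers, together with the known interpolation of the Bessel-potential scale $H^{2s,q}(\Dom;\R^2)$ with the boundary conditions, cf.\ \cite[Chapter 6]{pruss2016moving} or the references in \Cref{sss:notation}. The three regimes correspond precisely to the trace thresholds: for $0\le s<\frac{1}{2q}$ no boundary trace survives in $H^{2s,q}$ (since $2s<\frac1q$), so $\Do(A_q^s)=\Hs^{2s,q}$; once $2s>\frac1q$ the zeroth-order traces $f|_{\Gamma_b}$ and $f_2|_{\Gamma_u}$ make sense and must vanish, giving the second case (the Neumann condition $\partial_z f_1|_{\Gamma_u}=0$ is a first-order trace and only appears once $2s>1+\frac1q$, i.e.\ $s>\frac12+\frac1{2q}$); and in the third regime all three conditions are active. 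One must also check that the divergence-free constraint interpolates correctly, i.e.\ $[\Ls^q,\Ls^q\cap H^{2,q}]_s = \Hs^{2s,q}$, which follows from the boundedness of $\p_q$ on the whole scale and the retraction/coretraction theorem. The thresholds $s = \frac1{2q}$ and $s=\frac12+\frac1{2q}$ are excluded precisely because at those exponents the trace operator is not well-defined on $H^{2s,q}$ (the borderline Sobolev embedding fails), so the interpolation identity would pick up a Lizorkin--Triebel correction; this is standard and I would simply avoid those endpoints as in the statement.
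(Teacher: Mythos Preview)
Your proposal has a genuine gap at the central step: the claim that $\p_q$ commutes with the resolvents of $L_q$ (equivalently, that $\Ls^q$ is invariant under $(\lambda-L_q)^{-1}$, or that $\p_q$ maps $\Do(L_q)$ into $\Do(A_q)$) is false. The obstruction is the no-slip condition on $\Gamma_b$. The Helmholtz projection $\p_q u = u - \nabla\psi_u$ controls only the \emph{normal} derivative of $\psi_u$ on the boundary via \eqref{eq:psi_f_problem}; the tangential derivative $\partial_x\psi_u|_{\Gamma_b}$ has no reason to vanish, so for $u\in\Do(L_q)$ one gets $(\p_q u)_1|_{\Gamma_b} = -\partial_x\psi_u|_{\Gamma_b}\neq 0$ in general. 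Equivalently, if you solve $\lambda u - \Delta u = f$ componentwise with the mixed boundary conditions for $f\in\Ls^q$, the result is \emph{not} divergence-free: the scalar $g=\operatorname{div} u$ satisfies the homogeneous equation $\lambda g - \Delta g = 0$, but inherits only the single condition $\partial_z g|_{\Gamma_u}=0$ and no homogeneous condition at $\Gamma_b$, so $g\not\equiv 0$. This is exactly why the Stokes operator with Dirichlet data is not a restriction of the Laplacian and why the pressure is needed; your reduction would go through for pure free-slip conditions on both boundary components, but the no-slip part on $\Gamma_b$ breaks it.

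The paper avoids this by the Kunstmann--Weis transference technique \cite{KW17_stokes}, which does not require any commutation. One proves separately: (i) bounded $H^\infty$-calculus for $A_2$ and $B_2$ via self-adjointness; (ii) bounded $H^\infty$-calculus for $B_q$ via separation of variables and reflection (your treatment of this part is fine); (iii) $\rsec$-sectoriality of $A_q$ directly, via maximal $L^p$-regularity for the linear Stokes problem, obtained by localization to the two boundary components and the half-space results of \cite[Theorem 7.2.1]{pruss2016moving}; and (iv) a Littlewood--Paley decay estimate comparing $\varphi(tA_2)$ and $\psi(sB_2)$, which uses only that $\p$ maps $\Do(B^\gamma)\to\Do(A^\gamma)$ for small $\gamma<\tfrac14$ --- a much weaker compatibility than the one you claim. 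Step (iii) is the substantial missing ingredient in your outline and cannot be deduced from the calculus for $B_q$. Your discussion of the fractional-power domains via complex interpolation and trace thresholds is essentially correct once the $H^\infty$-calculus (hence bounded imaginary powers) is in hand.
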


The above implies that $-A_q$ generates an analytic semigroup on $\Ls^q$. \\
For convenience of notation, we will simply write $A$ in place of $A_2$. Moreover we define 
\begin{align*}
    H:=\mathbb{L}^2,\quad V:=\Do(A^{1/2}),\quad 
 \mathcal{D}(\Dom):=\{f\in C^{\infty}_{\mathrm{c}}(\Dom;\mathbb{R}^2)\, :\, \div f=0\}.
\end{align*}
We denote by $\langle\cdot,\cdot\rangle$ and $\lVert\cdot\rVert$ the inner product and the norm in $H$ respectively.
In the sequel we will denote by $V^{*}$ the dual of $V$ and we will identify $H$
with $H^{*}$. Every time $X$ is a reflexive Banach space such that the embedding $X\hookrightarrow H$ is continuous and dense, denoting by $X^*$ the dual of $X$, the scalar product $\left\langle
\cdot,\cdot\right\rangle $ in $H$ extends to the dual pairing between $X$ and $X^{*}$. We will simplify the notation accordingly.

\autoref{t:bounded_H_infty}  could be known to experts. 
For the reader's convenience,  we provide in \Cref{appendinx H infty calulus} a complete and relatively short proof based on the recent strategy used in \cite{KW17_stokes} for the $H^{\infty}$-calculus for the Stokes operator on Lipschitz domains  \cite[Theorem 16]{KW17_stokes}. 
\subsection{The Neumann map}\label{subsec Neumann map}
Now we are interested in $L^q$-estimates for the Neumann map, i.e.\ we are interested in studying the weak solutions of the elliptic problem
\begin{equation}\label{Non Homogeneous Stokes}
\left\{
\begin{aligned}
-\Delta u+\nabla \pi   &=0, & \text{ on }& \Dom,\\
\div u   &=0,\qquad & \text{ on }& \Dom,  \\
u(  \cdot,0)     &=0, & \text{ on }& \Gamma_b,\\
\partial_{z}u_1(  \cdot ,a)  & =g, & \text{ on }& \Gamma_u,\\
u_2     &=0, & \text{ on }& \Gamma_u.
\end{aligned}\right.
\end{equation}
To state the main result of this subsection, we need to formulate \eqref{Non Homogeneous Stokes} in the weak setting. To this end, we argue formally. Take $\varphi=(\varphi_1,\varphi_2)\in C^{\infty}(\Dom;\R^2)$ such that $\div \varphi=0$, 
\begin{equation*}
\varphi(\cdot,0)=0, \quad\text{ and } \quad \varphi_2( \cdot,a) =0.    
\end{equation*}
A formal integration by parts shows that \eqref{Non Homogeneous Stokes} implies
\begin{equation}
\label{Non Homogeneous Stokes weak}
    \int_{\Dom} \nabla u: \nabla \varphi \,\dd x \dd z= -\int_{\T} g(x) \varphi_1(x,a)\,\dd x.
\end{equation}

In particular, the RHS of \eqref{Non Homogeneous Stokes weak} makes sense even in case $g$ is a distribution if we interpret $\int_{\T} g(x) \varphi_1(x,a)\,\dd x =\langle \varphi_1(\cdot ,a),g\rangle$.

\begin{theorem}
    \label{Regularity of the Neumann map}
Let $q\in (1,\infty)$, for all $g\in W^{-1/q,q}(\Gamma_u)$ there exists a unique $(u,\pi)\in W^{1,q}(\Dom;\mathbb{R}^2)\times L^q(\Dom)/ \mathbb{R}$ weak solution of \eqref{Non Homogeneous Stokes}. Moreover $(u,\pi)$ satisfy 
\begin{align}
 \label{eq:elliptic_regularity_1}
     \lVert u \rVert_{W^{1,q}(\Dom;\mathbb{R}^2)}+\lVert \pi \rVert_{L^{q}(\Dom)/\mathbb R}\leq C\lVert g\rVert_{W^{-1/q,q}(\Gamma_u)}.
 \end{align}
Finally, if $g\in W^{1-1/q,q}(\Gamma_u)$, then 
 $(u,\pi)\in W^{2,q}(\Dom;\mathbb{R}^2)\times W^{1,q}(\Dom)/ \mathbb{R}$ and
 \begin{align}
 \label{eq:elliptic_regularity_2}
     \lVert u \rVert_{W^{2,q}(\Dom;\mathbb{R}^2)}+\lVert \pi \rVert_{W^{1,q}(\Dom)/\mathbb R}\leq C\lVert g\rVert_{W^{1-1/q,q}(\Gamma_u)}.
 \end{align}
\end{theorem}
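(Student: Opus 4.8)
\textbf{Proof strategy for \autoref{Regularity of the Neumann map}.}

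The plan is to reduce \eqref{Non Homogeneous Stokes} to a homogeneous-boundary Stokes problem by subtracting a suitable extension of the boundary datum, and then to combine the Lax--Milgram argument in Banach spaces with the regularity theory for the Stokes operator $A_q$ from \autoref{t:bounded_H_infty}. First I would construct, for $g\in W^{-1/q,q}(\Gamma_u)$, a vector field $G\in L^q(\Dom;\R^2)$ (or rather a divergence-free extension at the right regularity level) whose ``Neumann-type'' trace $\partial_z G_1|_{\Gamma_u}$ equals $g$, with $G|_{\Gamma_b}=0$ and $G_2|_{\Gamma_u}=0$, and with $\|G\|\lesssim \|g\|_{W^{-1/q,q}(\Gamma_u)}$; since $\Dom=\T\times(0,a)$ is a product domain this can be done explicitly by a Fourier expansion in the torus variable and an elementary one-dimensional extension in $z$, or by invoking the mapping properties of a lifting operator together with the Helmholtz projection $\p_q$. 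The precise smoothness bookkeeping — that $g\in W^{-1/q,q}(\Gamma_u)$ buys $W^{1,q}$-type control and $g\in W^{1-1/q,q}(\Gamma_u)$ buys $W^{2,q}$-type control — is exactly the trace-theory statement that $\partial_z$ of a $W^{s,q}(\Dom)$ function has trace in $W^{s-1-1/q,q}(\Gamma_u)$; this is where I would spend the most care.

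Next, for existence and the basic estimate \eqref{eq:elliptic_regularity_1}, I would work directly in the weak formulation \eqref{Non Homogeneous Stokes weak}. The bilinear form $(u,\varphi)\mapsto \int_\Dom \nabla u:\nabla\varphi$ is bounded and coercive on the closed subspace of $W^{1,q}(\Dom;\R^2)$ consisting of divergence-free fields vanishing on $\Gamma_b$ with vanishing normal component on $\Gamma_u$ — coercivity following from a Poincaré inequality on $\Dom$ using the no-slip condition on $\Gamma_b$ — and the right-hand side $\varphi\mapsto \langle\varphi_1(\cdot,a),g\rangle$ is a bounded functional by the trace theorem $W^{1,q}(\Dom)\embed W^{1-1/q,q}(\Gamma_u)$ paired against $W^{-1/q,q}(\Gamma_u)$. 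Applying the Banach-space Lax--Milgram theorem \cite[Theorem 1.1]{KY13_LaxMilgram} to the form on $L^{q'}\times L^q$ data (or rather its natural variant on the solenoidal subspace) yields a unique $u$ with the asserted bound; the pressure $\pi\in L^q(\Dom)/\R$ is then recovered from $-\Delta u+\nabla\pi=0$ by De Rham's theorem, its norm controlled by $\|\nabla u\|_{L^q}$ via the surjectivity of $\div$ onto $L^q/\R$ (equivalently, the closed-range / inf-sup property for this geometry). Uniqueness follows since a solution with $g=0$ lies in the coercivity subspace and is tested against itself.

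For the higher regularity statement, given $g\in W^{1-1/q,q}(\Gamma_u)$ I would write $u=G+\tilde u$ where $G$ is the extension above (now chosen in $W^{2,q}$, using the better datum) and $\tilde u$ solves a Stokes system with homogeneous boundary conditions and right-hand side $f:=\Delta G\in L^q$; by the identification $\Do(A_q)\embed W^{2,q}$ from \autoref{t:bounded_H_infty} (together with the invertibility of $A_q$), $\tilde u=A_q^{-1}\p_q f\in W^{2,q}\cap\Ls^q$ with $\|\tilde u\|_{W^{2,q}}\lesssim\|f\|_{L^q}\lesssim\|G\|_{W^{2,q}}\lesssim\|g\|_{W^{1-1/q,q}(\Gamma_u)}$, and adding back $G$ gives \eqref{eq:elliptic_regularity_2}; the corresponding pressure gains one derivative since $\nabla\pi=\Delta u\in L^q$ has a curl-free $L^q$ representative, forcing $\pi\in W^{1,q}/\R$. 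The main obstacle I anticipate is not any single hard inequality but assembling the extension operator with the exact fractional-order trace mapping properties on the cylinder $\T\times(0,a)$ so that the two endpoint regularities come out with the stated sharp exponents; the product structure of $\Dom$ and the availability of \autoref{t:bounded_H_infty} should, however, make every ingredient routine once the lifting is set up correctly.
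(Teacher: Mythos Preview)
Your proposal has a genuine gap in the first part. You claim that the bilinear form $(u,\varphi)\mapsto \int_\Dom \nabla u:\nabla\varphi$ is ``coercive'' on the solenoidal subspace of $W^{1,q}$, with coercivity coming from Poincar\'e, and that uniqueness follows by ``testing against itself''. This works only when $q=2$. For $q\neq 2$ the test space is $\Do(A_{q'}^{1/2})$, not $\Do(A_q^{1/2})$, so you cannot test a solution against itself; the Banach-space Lax--Milgram theorem you cite requires instead the inf--sup (BNB) condition
\[
\|\nabla v\|_{L^q}\eqsim \sup\Big\{\int_{\Dom}\nabla v:\nabla f\;\Big|\;f\in \Do(A_{q'}^{1/2}),\ \|f\|_{\Do(A_{q'}^{1/2})}\leq 1\Big\},
\]
and this is the whole point of Step 1 in the paper. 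Poincar\'e alone does not give it. The paper proves the non-trivial direction via the duality $(X_{\alpha,A_q})^*=X_{-\alpha,A_{q'}}$ of the fractional scale and an explicit chain of identities moving $A_q^{1/2}$ across the pairing, ultimately relying on \autoref{t:bounded_H_infty}. Without this argument (or an equivalent one, e.g.\ proving $\Do(A_q^{1/2})^* \simeq \Do(A_{q'}^{1/2})$ directly), your Step 1 does not go through.

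For the higher-order estimate your route is genuinely different from the paper's and, in principle, viable: you subtract a divergence-free $W^{2,q}$ lifting of the Neumann datum and invoke $A_q^{-1}:\Ls^q\to\Do(A_q)$. The paper instead establishes an a-priori $W^{2,q}$ estimate by localization and \cite[Theorem 7.2.1]{pruss2016moving}, proves existence for smooth $g$ by $L^2$-theory plus Nirenberg difference quotients, and concludes by density. Your approach is cleaner if the lifting can be built, but be warned that constructing a lifting $G$ that is simultaneously divergence-free, satisfies $G|_{\Gamma_b}=0$, $G_2|_{\Gamma_u}=0$, and $\partial_z G_1|_{\Gamma_u}=g$ with $\|G\|_{W^{2,q}}\lesssim\|g\|_{W^{1-1/q,q}}$ is not entirely trivial; the Helmholtz projection you mention will in general spoil the Dirichlet conditions on $\Gamma_b$ and the vanishing of $G_2$ on $\Gamma_u$, so you would need to build the solenoidal lifting by hand (e.g.\ via a stream function on the strip) rather than by projecting an arbitrary extension.
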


\begin{proof}
We divide the proof into three steps.

\emph{Step 1: Proof of \eqref{eq:elliptic_regularity_1}}. Let $A_q$ be as in \Cref{sec preliminaries}. We prove \eqref{eq:elliptic_regularity_1} by applying the  Lax-Milgram theorem 
of \cite[Theorem 1.1]{KY13_LaxMilgram} to the form $a:Y_1\times Y_2\to \R$ where
\begin{equation*}
a(u,\varphi)= 
    \int_{\Dom} \nabla u: \nabla \varphi \,\dd x\dd z,\quad Y_1=\Do(A_q^{1/2}), \quad Y_2=\Do(A_{q'}^{1/2}).    
\end{equation*}
Recall that, by \autoref{t:bounded_H_infty},
\begin{equation*}
\Do(A_q^{1/2})=\{v=(v_1,v_2)\in \Hs^{1,q}(\Dom)\,:\, v|_{\Gamma_b}=0,\  v_2|_{\Gamma_u}=0\}.  
\end{equation*}
Since $W^{1,q'}(\Dom)\ni \varphi \mapsto \varphi_1|_{\Gamma_u}\in W^{1-1/q',q'}(\Gamma_u)= W^{1/q,q'}(\Gamma_u)$, we have  
\begin{equation}
|\langle \varphi_1(\cdot ,a),g\rangle|\leq \|g\|_{W^{-1/q,q}(\Gamma_u)}\|\varphi\|_{W^{1/q,q'}(\Gamma_u)}
\lesssim \|g\|_{W^{-1/q,q}(\Gamma_u)}\|\varphi\|_{W^{1,q'}(\Dom)}.
\end{equation}
Hence the Lax-Milgram theorem of
of \cite[Theorem 1.1]{KY13_LaxMilgram} implies the existence of $u$ as in \eqref{eq:elliptic_regularity_1} provided, for all $v\in 
\Do(A_p^{1/2})$,
\begin{equation}
\label{eq:variational_ch_Lq_norm}
\|\nabla v\|_{L^q(\Dom;\R^2)}\eqsim \sup\Big\{\int_{\Dom} \nabla v :\nabla f \,\dd x\dd z\, \Big|\,f\in \Do(A^{1/2}_{q'})\text{ and } \| f\|_{\Do(A^{1/2}_{q'})}\leq 1 \Big\}.
\end{equation}
The case $\gtrsim$ of \eqref{eq:variational_ch_Lq_norm} follows from the H\"older inequality. To prove the opposite inequality, we argue by duality. We start by discussing some known facts about the ``Sobolev tower'' of spaces associated the operator $A_p$:
\begin{align*}
X_{\alpha,A_q}&= \Do(A_p^{\alpha}) \  &\text{ for }&\alpha\geq 0,\\
X_{\alpha,A_q}&= (\Ls^q,\|A_q^{{\alpha}} \cdot\|_{\mathbb{L}^q})^{\sim} \  &\text{ for }&\alpha<0.
\end{align*}
Here $\sim$ denotes the completion (since $0\in \rho(A_q)$ by \autoref{t:bounded_H_infty}, we have that $f\mapsto \|A_q^{{\alpha}} f\|_{\Ls^q}$ is a norm {for all $\alpha<0$}). 
Since $(A_q)^*=A_{q'}$, it follows that (see e.g.\ \cite[Chapter 5, Theorem 1.4.9]{Amann95})
\begin{equation}
\label{eq:duality_fractional_scale}
(X_{\alpha,A_q})^*= X_{-\alpha,A_{q'}}.
\end{equation}
Now we can proceed in the proof of $\lesssim$ in \eqref{eq:variational_ch_Lq_norm}. Firstly, as $\Do(A_q)\ \hookrightarrow \Do(A^{1/2}_q)$ is dense for all $q\in (1,\infty)$, we can prove such inequality assuming $v\in \Do(A_q)$. In the latter case, the duality \eqref{eq:duality_fractional_scale} and the Hahn-Banach theorem imply the existence of $g\in X_{-\alpha,A_{q'}}$ of unit norm such that 
\begin{align*}
\|A_q^{1/2} v\|_{L^q(\Dom;\R^2)}
&=\int_{\Dom} A^{1/2}_q v \cdot A_{q'}^{-1/2} g  \,\dd x\dd z\\
&\stackrel{(i)}{=}\int_{\Dom} A_q v \cdot A_{q'}^{-1} g  \,\dd x \dd z\\
&\stackrel{(ii)}{=}-\int_{\Dom} \Delta v \cdot A_{q'}^{-1} g  \,\dd x \dd z\\
&\stackrel{(iii)}{=}-\int_{\Dom} \nabla v : \nabla (A^{-1}_{q'} g)  \,\dd x \dd z
\end{align*}
where in $(i)$ we used that $A_q^{1/2}v =A_{q}^{-1/2} (A_q v)$ and $(A_{q}^{-1/2})^*=A_{q'}^{-1/2}$, in $(ii)$ that $A_q=-\p_q \Delta_q$ and therefore $\p_{q'}A_{q'}^{-1} g=A_{q'}^{-1} g$ as $A_{q'}^{-1} g\in \Do(A^{1/2}_{q'})\subseteq \Ls^{q'}(\Dom)$. Finally, in $(iii)$ we used that no boundary terms appear due to the boundary conditions and $\partial_z v_1(\cdot,a)=0$ as $v\in \Do(A_q)$.

Hence the case $\lesssim$ of \eqref{eq:variational_ch_Lq_norm} follows from the above chain of equality, the fact that $\Do(A_q^{1/2})\embed W^{1,q}(\Dom;\R^2)$ and $A^{-1}_{q'}:X_{-1/2,A_{q'}}\to X_{1/2,A_{q'}}$ is an isomorphism.

Now, the existence of the pressure $\pi$ satisfying the estimate \eqref{eq:elliptic_regularity_1} is standard and follows from the De Rham theorem, see e.g.\ \cite[Corollary III.5.1, Lemma IV.1.1]{G11_book}.

\emph{Step 2: Proof of \eqref{eq:elliptic_regularity_2}}. By Step 1, it suffices to prove the existence of a solution $(u,\pi)\in W^{2,q}(\Dom)\times W^{1,q}(\Dom)/\R$ for which \eqref{Non Homogeneous Stokes} holds. In case of $g\in C^{\infty}(\Gamma_u)$, the conclusion follows from standard $L^2$-theory and we will present the argument in this case at the end of the proof. In the remaining case we argue by density. Note that, arguing as in the proof of \autoref{prop:max_reg_stokes}, a localization argument and \cite[Theorem 7.2.1]{pruss2016moving} (applied with time as a dummy variable) yield the following a-priori estimates for solutions $(u,\pi)\in W^{2,q}(\Dom;\mathbb{R}^2)\times W^{1,q}(\Dom)/\R$ to \eqref{Non Homogeneous Stokes}:
\begin{align*}
    \|u\|_{ W^{2,q}(\Dom;\mathbb{R}^2)}+ \|\nabla \pi\|_{ W^{1,q}(\Dom;\mathbb{R}^2)}\leq C\|u\|_{W^{2-2/q,q}(\Dom;\mathbb{R}^2)}+ C\|g\|_{W^{1-1/q,q}(\Gamma_u)}&\\
    \leq  \varepsilon \|u\|_{W^{2,q}(\Dom;\mathbb{R}^2)}+C_{\varepsilon}\|u\|_{W^{1,q}(\Dom;\mathbb{R}^2)} + C\|g\|_{W^{1-1/q,q}(\Gamma_u)}&\\
    \leq  \varepsilon \|u\|_{W^{2,q}(\Dom;\mathbb{R}^2)}+C_{\varepsilon}\|g\|_{W^{1-1/q,q}(\Gamma_u)}&,
\end{align*}
where $\varepsilon>0$ is arbitrary and in the last step we applied Step 1.

    The above shows $\|u\|_{ W^{2,q}(\Dom;\mathbb{R}^2)}+ \|\nabla \pi\|_{ W^{1,q}(\Dom)}\lesssim \|g\|_{W^{1-1/q,q}(\T)}$ for all solutions $(u,\pi)\in W^{2,q}(\Dom;\mathbb{R}^2)\times W^{1,q}(\Dom)/\R$ to \eqref{Non Homogeneous Stokes}. Combining this, the density of $C^{\infty}(\Gamma_u)$ in $ W^{1-1/q,q}(\Gamma_u)$, and the above mentioned solvability for $g\in C^{\infty}(\Gamma_u)$; one readily obtains the existence of solutions to \eqref{Non Homogeneous Stokes} in the class $W^{2,q}(\Dom)\times W^{1,q}(\Dom)/\R$.
    
\emph{Step 3: Proof of the regularity of $(u,\pi)$ in case of $g\in C^{\infty}(\mathbb{T})$.} The proof of this fact follows the lines of \autoref{Proposition well posed L2}. First, by Lax-Milgram Lemma and \cite[Proposition 1.1, Proposition 1.2]{temam2001navier}, there exists a unique couple, $(u,\pi)\in V\times L^2(\Dom)$ such that  
\begin{align}
    \int_{\Dom} \nabla u:\nabla \phi \,\dd x\dd z&=-\int_{\mathbb T} g(x)\phi_1(x,a)\,\dd x \quad \forall \phi \in V \label{weak formulation L2 boundary}\\
    \langle -\Delta u+\nabla\pi,\varphi\rangle&=0 \quad \forall \varphi \in C^{\infty}_c(\Dom;\mathbb{R}^2) \label{distribution formulation L2 boundary}\\
    \lVert u\rVert_V+\lVert \pi\rVert_{L^2/\mathbb{R}}&\lesssim \lVert g\rVert_{H^{-1/2}(\Gamma_u)}\label{first energy estimate L2 boundary}.
\end{align}
Now, let us fix $h>0$, extend periodically either $u$ and $g$ in the $x$ direction and consider $\phi=\tau_{h}\tau_{-h} u$ as a test function in \eqref{weak formulation L2 boundary}, where $\tau_h v=\frac{v(x+h,z)-v(x,z)}{h}.$ Then by change of variables, it follows that
\begin{align*}
    \lVert \tau_h \nabla u\rVert_{L^2(\Dom)}^2& \leq C \lVert \tau_{h} g\rVert_{L^2(\Gamma_u)} \lVert \tau_{h} u\rVert_{L^2(\Gamma_u)}\\ & \leq C \lVert \tau_{h} g\rVert_{L^2(\Gamma_u)}\lVert \tau_{h} u\rVert_{L^2(\Dom)}+C \lVert \tau_{h} g\rVert_{L^2(\Gamma_u)}\lVert \tau_{h} \nabla u\rVert_{L^2(\Dom)}\\ & \leq C\lVert g\rVert_{C^1(\Gamma_u)}\lVert \tau_h u\rVert_{L^2(\Dom)}+\frac{\lVert \tau_{h} \nabla u\rVert_{L^2(\Dom)}^2}{2}+C\lVert g\rVert_{C^1(\Gamma_u)}^2.
\end{align*}
Therefore 
\begin{align}\label{second energy estimate L^2 boundary}
  \lVert \tau_h \nabla u\rVert_{L^2(\Dom)}^2\leq  C\lVert g\rVert_{C^1(\Gamma_u)}\lVert \tau_h u\rVert_{L^2(\Dom)}+C\lVert g\rVert_{C^1(\Gamma_u)}^2. 
\end{align}
Since $u\in V$ and \eqref{first energy estimate L2 boundary} holds the right hand side of inequality \eqref{second energy estimate L^2 boundary} is uniformly bounded in $h\rightarrow 0$ and this implies \begin{align}
    \lVert \partial_x \nabla u\rVert_{L^2(O)}^2\leq C\lVert g\rVert_{C^1(\Gamma_u)}^2.
\end{align}
Let us now consider $\phi=\partial_x\psi,\ \psi\in \mathcal{D}(\Dom)$ as test function in \eqref{weak formulation L2 boundary}. Thanks to \cite[Proposition 1.1, Proposition 1.2]{temam2001navier}, $\partial_x \pi\in L^2(\Dom)$ and $\lVert \partial_x\pi\rVert_{L^2}\lesssim \lVert g\rVert_{C^1(\Gamma_u)} $. Since $u$ is divergence free and \eqref{distribution formulation L2 boundary} holds, then \begin{align*}
    \partial_z \pi=\partial_{xx}u_2-\partial_{xz}u_1\in L^2(\Dom).
\end{align*}
Therefore $\lVert \nabla\pi\rVert_{L^2}\lesssim \lVert g\rVert_{C^1(\Gamma_u)}.$
Lastly, again by relation \eqref{distribution formulation L2 boundary}
\begin{align*}
    \partial_{zz}u_1=\partial_x\pi-\partial_{xz}u_1\in L^2(\Dom)
\end{align*}
Combining all the information obtained we get 
\begin{align*}
    \lVert u\rVert_{H^{2}(\Dom;\mathbb{R}^2)}+\lVert \pi\rVert_{H^1(\Dom)/\mathbb{R}}\leq C\lVert g\rVert_{C^1(\Gamma_u)}^2
\end{align*}
Iterating the argument one gets that $(u,\pi)\in H^{k+1}(\Dom;\mathbb{R}^2)\times H^{k}(\Dom)$ provided $g\in C^k(\Gamma_u)$ for all $k\geq 1$. Now the claim of Step 3 follows from Sobolev embeddings.
\end{proof}

Next we denote by $\NM$ the solution map defined by \autoref{Regularity of the Neumann map} which associate to a boundary datum $g$ the velocity $u$ solution of \eqref{Non Homogeneous Stokes}, i.e.\ $\NM g:=u$. 
From the above result we obtain 

\begin{corollary}\label{Corollary regularity Neumann map}
Let $\NM$ and $\mathcal{H}$ be the Neumann map and a Hilbert space, respectively. 
Then, for all $q\geq 2$ and $\varepsilon>0$,
\begin{enumerate}[leftmargin=*]
    \item\label{it:mapping_NM1} $\NM\in \mathscr{L}(W^{-\alpha,q}(\Gamma_u;\mathcal{H});\gamma(\mathcal{H},\Do(A_q^{\frac{1-\alpha}{2}+\frac{1}{2q}-\epsilon})))$ for $\alpha\in [0,\frac{1}{q}]$.
    \item\label{it:mapping_NM2} $\NM\in \mathscr{L}(L^q({\Gamma_u};\mathcal{H});\gamma(\mathcal{H},\Do(A_q^{\frac{1}{2}+\frac{1}{2q}-\epsilon})))$.
\end{enumerate}
\end{corollary}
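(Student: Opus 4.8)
The plan is to deduce \autoref{Corollary regularity Neumann map} from \autoref{Regularity of the Neumann map} by combining the Hilbert-space-valued lifting (via the $\gamma$-radonifying/Fubini identity $\mathcal{A}^{s,q}(D;\mathcal{K})=\gamma(\mathcal{K},\mathcal{A}^{s,q}(D))$ from \Cref{sss:notation}), an interpolation between the two endpoint estimates in \autoref{Regularity of the Neumann map}, and the characterization of $\Do(A_q^s)$ in terms of the spaces $\Hs^{2s,q}(\Dom)$ from \autoref{t:bounded_H_infty}. First I would record the scalar mapping properties: \autoref{Regularity of the Neumann map} says $\NM\in\mathscr{L}(W^{-1/q,q}(\Gamma_u);W^{1,q}(\Dom;\R^2))$ and $\NM\in\mathscr{L}(W^{1-1/q,q}(\Gamma_u);W^{2,q}(\Dom;\R^2))$, and moreover the image lies in $\Ls^q$ since $\div u=0$ and the boundary conditions in \eqref{Non Homogeneous Stokes} match those defining $\Do(A_q)$ in the relevant ranges, so in fact $\NM$ maps into $\Hs^{1,q}(\Dom)$ and $\Hs^{2,q}(\Dom)$ respectively (with the appropriate traces vanishing).

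Next I would interpolate. Complex interpolation of the boundary Besov spaces gives $[W^{-1/q,q}(\Gamma_u),W^{1-1/q,q}(\Gamma_u)]_\theta = W^{\theta-1/q,q}(\Gamma_u)$ (a standard Besov-scale identity), while on the target side $[\Hs^{1,q}(\Dom),\Hs^{2,q}(\Dom)]_\theta=\Hs^{1+\theta,q}(\Dom)$ up to the usual care about which boundary conditions are ``seen'' by the interpolation scale. Hence $\NM\in\mathscr{L}(W^{\theta-1/q,q}(\Gamma_u);\Hs^{1+\theta,q}(\Dom))$, equivalently with $\alpha:=1/q-\theta\in[1/q-1,1/q]$, so for $\alpha\in[0,1/q]$ one gets $\NM\in\mathscr{L}(W^{-\alpha,q}(\Gamma_u);\Hs^{1-\alpha+1/q,q}(\Dom))$ — wait, let me recompute: $1+\theta=1+1/q-\alpha$, so the target smoothness is $1+1/q-\alpha = 2\cdot\frac{1-\alpha}{2}+\frac{1}{q}$, i.e.\ $\Hs^{2s,q}$ with $s=\frac{1-\alpha}{2}+\frac{1}{2q}$. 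Now, since this smoothness index $2s$ exceeds the threshold where $\Do(A_q^s)$ requires boundary conditions, one cannot literally write $\Do(A_q^{s})=\Hs^{2s,q}(\Dom)$; instead one loses an arbitrarily small $\epsilon$: by \autoref{t:bounded_H_infty} and the fact that $\NM g$ satisfies the homogeneous boundary conditions $f|_{\Gamma_b}=0$, $f_2|_{\Gamma_u}=0$ (and, when $\alpha$ is small enough that $2s$ crosses the next threshold, also $\partial_z f_1|_{\Gamma_u}=0$), one has $\NM g\in\Do(A_q^{s-\epsilon})$ for every $\epsilon>0$, with $\Do(A_q^{s-\epsilon})=\{f\in\Hs^{2s-2\epsilon,q}:f|_{\Gamma_b}=0,\ f_2|_{\Gamma_u}=0\}$ (or with the extra condition). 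The $\epsilon$-loss is exactly the price for the traces being compatible but the operator scale only recognizing them strictly below the critical exponent. This yields \autoref{it:mapping_NM1}, and \autoref{it:mapping_NM2} follows identically using the embedding $L^q(\Gamma_u)\embed W^{-\epsilon',q}(\Gamma_u)$ for small $\epsilon'$ (or by running the same interpolation with $g\in W^{0,q}$, $L^q\embed W^{-\epsilon',q}$) and taking $\alpha=\epsilon'$, $\epsilon'\downarrow 0$; alternatively interpolate between $\alpha=0$ and $\alpha$ slightly negative isn't allowed, so the cleaner route is $L^q\hookrightarrow W^{-\epsilon',q}$ and apply part (1) with $\alpha=\epsilon'$, absorbing $\epsilon'$ into the final $\epsilon$.

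Finally, the Hilbert-space-valued version: everything above is linear and bounded, and by the Fubini-type identity $W^{s,q}(D;\mathcal{H})=\gamma(\mathcal{H},W^{s,q}(D))$ and $H^{s,q}(D;\mathcal{H})=\gamma(\mathcal{H},H^{s,q}(D))$ recorded in \Cref{sss:notation}, tensoring $\NM$ with $\mathcal{H}$ gives a bounded operator $W^{-\alpha,q}(\Gamma_u;\mathcal{H})=\gamma(\mathcal{H},W^{-\alpha,q}(\Gamma_u))\to\gamma(\mathcal{H},\Do(A_q^{s-\epsilon}))$ (using that $\Do(A_q^{s-\epsilon})$, as a closed subspace of $H^{2(s-\epsilon),q}(\Dom;\R^2)$ cut out by bounded trace functionals, inherits the $\gamma$-Fubini property, or equivalently that $A_q^{s-\epsilon}$ intertwines with the identity on $\mathcal{H}$ via $\gamma(\mathcal{H},\cdot)$ functoriality and ideal property). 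The main obstacle I anticipate is the careful bookkeeping of \emph{which} boundary conditions survive interpolation and at which threshold the extra condition $\partial_z f_1|_{\Gamma_u}=0$ must be imposed to land in $\Do(A_q^{s-\epsilon})$ rather than merely in $\Hs^{2(s-\epsilon),q}$ — i.e.\ matching the three cases of \autoref{t:bounded_H_infty} against the value of $\frac{1-\alpha}{2}+\frac{1}{2q}$ for $\alpha\in[0,1/q]$ — together with justifying the identity $[\Hs^{1,q},\Hs^{2,q}]_\theta=\Hs^{1+\theta,q}$ on the divergence-free subspace; the $\epsilon$-loss is precisely what lets us sidestep the boundary-condition obstruction at the critical exponents, so no sharp endpoint result is needed.
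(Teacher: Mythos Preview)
Your strategy is the same as the paper's: reduce to the scalar case via the $\gamma$-Fubini identity and the ideal property, interpolate between the two endpoint estimates of \autoref{Regularity of the Neumann map}, note that $\NM g$ satisfies the homogeneous boundary conditions on $\Gamma_b$ and for the second component on $\Gamma_u$, and then invoke the fractional-domain characterization in \autoref{t:bounded_H_infty}. Two differences are worth recording. First, the paper uses the \emph{real} method $(\cdot,\cdot)_{\theta,q}$, so the target lands in $B^{1+\theta}_{q,q}(\Dom;\R^2)$ and the $\epsilon$-loss comes entirely from the embedding $B^{1+\theta}_{q,q}\hookrightarrow H^{1+\theta-\epsilon,q}$; you use complex interpolation and locate the $\epsilon$-loss at the boundary-condition threshold. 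Second, the paper interpolates in the \emph{ambient} spaces $W^{1,q}(\Dom;\R^2)$ and $W^{2,q}(\Dom;\R^2)$ and only afterwards records that the specific element $\NM g$ is divergence-free and satisfies the traces $\NM g|_{\Gamma_b}=0$, $(\NM g)_2|_{\Gamma_u}=0$; this sidesteps the obstacle you flag of justifying $[\Hs^{1,q},\Hs^{2,q}]_\theta=\Hs^{1+\theta,q}$ on the divergence-free subspace.

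One correction: your parenthetical assertion that $\NM g$ also satisfies $\partial_z f_1|_{\Gamma_u}=0$ when $\alpha$ is small is false --- by construction $\partial_z(\NM g)_1|_{\Gamma_u}=g$, which is the whole point of the Neumann map. Fortunately this case never arises: for $\alpha\in[0,1/q]$ one has $s=\frac{1-\alpha}{2}+\frac{1}{2q}\leq\frac12+\frac{1}{2q}$, so after subtracting $\epsilon$ you are always in the second regime of \autoref{t:bounded_H_infty}, where only $f|_{\Gamma_b}=0$ and $f_2|_{\Gamma_u}=0$ are required. For part \eqref{it:mapping_NM2} the paper simply uses $L^q(\Gamma_u)\hookrightarrow B^0_{q,q}(\Gamma_u)$ (valid for $q\geq 2$) and applies \eqref{it:mapping_NM1} at $\alpha=0$; your route through $L^q\hookrightarrow W^{-\epsilon',q}$ and $\alpha=\epsilon'$ also works.
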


\begin{proof}
To begin, recall that $W^{s,q}(\Gamma_u;\mathcal{H})=\gamma(\mathcal{H},W^{s,q}(\Gamma_u))$ for all $s\in \R$ and $ q\in (1,\infty)$, see \Cref{sss:notation}. Hence, due to the ideal property of $\gamma$-radonifying operators \cite[Theorems 9.1.10 and 9.1.20]{Analysis2}, it is enough to consider the scalar case $\mathcal{H}=\R$.

\eqref{it:mapping_NM1}: 
By interpolating with the real method $(\cdot,\cdot)_{\theta,q}$ where $\theta\in (0,1)$ (see e.g.\ \cite[Theorem 6.4.5]{BeLo}), the estimates in \autoref{Regularity of the Neumann map} yield 
\begin{equation*}
\NM : W^{\theta-1/q,q} (\Gamma_u)\to W^{\theta+1,q}(\Dom)\ \  \text{ for all }\theta\in (0,1).    
\end{equation*}
Moreover, by construction $\NM [u] $ satisfies 
\begin{equation*}
\NM [u]|_{\Gamma_b}=0 , \quad \text{ and }\quad (\NM [u])_2 |_{\Gamma_u}=0,    
\end{equation*}
where $(\cdot)_2$ denotes the second component.
Hence \eqref{it:mapping_NM1} follows from the description of the fractional power of $A_q$ in \autoref{t:bounded_H_infty}
and that $B^{1+\theta}_{q,q}(\Dom;\R^2)\embed H^{\theta+1-\varepsilon,q}(\Dom;\R^2)$.

\eqref{it:mapping_NM2}: Follows from \eqref{it:mapping_NM1} and $L^q(\Gamma_u)\embed B^{0}_{q,q}(\Gamma_u)$ as $q\geq 2$.
\end{proof}

\subsection{Deterministic Navier-Stokes equations}
Let us consider the deterministic Navier-Stokes equations with homogeneous boundary conditions 

\begin{equation}
\left\{
\begin{aligned}
\partial_{t}\overline{u}+\overline{u}\cdot\nabla\overline{u}+\nabla\overline{\pi}  & =\Delta \overline{u}+\overline{f}, \quad &\text{ on }&(0,T)\times \Dom,\\
\operatorname{div}\overline{u}  & =0,&\text{ on }&(0,T)\times \Dom,\\
\overline{u}   & = 0, &\text{ on }&(0,T)\times \Gamma_b,\\
\partial_{z}\overline{u}_1   & =0, &\text{ on }&(0,T)\times \Gamma_u,\\
\overline{u}_2    & =0, &\text{ on }&(0,T)\times \Gamma_u, \\ 
\overline{u}(0)   & =\overline{u}_0, &\text{ on }&\Dom .  
\end{aligned}
\right.
\label{classical NS}   
\end{equation}

Define the trilinear form $b:\mathbb{L}^{4}\times V\times\mathbb{L}%
^{4}\rightarrow\mathbb{R}$ as%
\begin{equation}
\label{def:b_trilinear_form}
b\left(  u,v,w\right)  =\sum_{i,j=1}^{2}\int_{\Dom}u_{i}
\partial_{i}v_{j}  w_{j}  \, \dd x\dd z=\int_{\Dom}\left(
u\cdot\nabla v\right)  \cdot w\,\dd x\dd z
\end{equation}
which is well--defined and continuous on $\mathbb{L}^{4}\times V\times
\mathbb{L}^{4}$ by the H\"{o}lder inequality. Since by Sobolev embedding theorem $V\subset\mathbb{L}^{4}$, $b$ is also defined and continuous on
$V\times V\times V$. Moreover, by standard interpolation inequalities,
\begin{equation}
\lVert f\rVert _{L^{4}\left(  \Dom\right)  }^{2}\leq C\lVert
f\rVert _{L^{2}\left(  \Dom\right)  }\lVert f\rVert _{H^{1}\left(
\Dom\right)  } \ \ \text{ for all }f\in H^{1}\left(  \Dom\right)  . \label{interpolation ineq}%
\end{equation}
 Integrating by parts, the standard oddity relation below holds
\[
b\left(  u,v,w\right)  =-b\left(  u,w,v\right)
\]
if $u\in\mathbb{L}^{4}$, $v,w\in V$.

Lastly we introduce the operator
\[
B:\mathbb{L}^{4}\times\mathbb{L}^{4}\rightarrow V^*%
\]
defined by the identity%
\[
\left\langle B\left(  u,v\right)  ,\phi\right\rangle =-b\left(  u,\phi
,v\right)  =-\int_{\Dom}\left(  u\cdot\nabla\phi\right)  \cdot v\,\dd x\dd z
\]
for all $\phi\in V$. When $v\in V$, we may also write%
\[
\left\langle B\left(  u,v\right)  ,\phi\right\rangle =b(  u,v,\phi
).\]
Moreover, when $u\cdot\nabla v\in L^{2}\left(  \Dom;\mathbb{R}^{2}\right)  $, it
is explicitly given by
\[
B\left(  u,v\right)  =\p(  u\cdot\nabla v).
\]
We have to define our notion of weak solution for problem \eqref{classical NS}.
\begin{definition}
\label{Def NS determ}Given $\overline{u}_{0}\in H$ and $\overline{f}\in L^{2}\left(  0,T;V^{*}\right)  $, we say that
\[
\overline{u}\in C\left(  \left[  0,T\right]  ;H\right)  \cap L^{2}\left(  0,T;V\right)
\]
is a weak solution of equation \eqref{classical NS} if 
for all $\phi\in \Do\left(  A\right)  $ and $t\in [0,T]$,
\begin{align*}
&  \left\langle \overline{u}\left(  t\right)  ,\phi\right\rangle -\int_{0}^{t}b\left(
\overline{u}\left(  s\right)  ,\phi,\overline{u}\left(  s\right)  \right)  \,\dd s\\
&  =\left\langle \overline{u}_{0},\phi\right\rangle -\int_{0}^{t}\left\langle \overline{u}\left(
s\right)  ,A\phi\right\rangle \,\dd s+\int_{0}^{t}\left\langle \overline{f}\left(  s\right)
,\phi\right\rangle_{V^*,V} \,\dd s.
\end{align*}
\end{definition}

The following results
are simple adaptations of classical results, see for instance  \cite{flandoli2023stochastic,lions1996mathematical,temam1995navier,temam2001navier}.
\begin{lemma}
\label{lemma tecnico}If $u,v\in L^{4}\left(  0,T;\mathbb{L}^{4}\right)  $
then
\begin{equation}
B\left(  u,v\right)  \in L^{2}\left(  0,T;V^*\right)  .
\label{ineq 1 lemma tecnico}%
\end{equation}
Moreover,%
\begin{equation}
\lvert b\left(  u,v,w\right)  \rvert \leq\epsilon\lVert
v\rVert _{V}^{2}+\epsilon'\lVert u\rVert _{V}^{2}+\frac{C%
}{\epsilon^2\epsilon'}\lVert u\rVert^{2}\lVert w\rVert
_{\mathbb{L}^4}^{4} \label{ineq 2 lemma tecnico}%
\end{equation}%
\begin{equation}
\lvert b\left(  u,v,w\right)  \rvert \leq\epsilon\lVert
v\rVert _{V}^{2}+\epsilon'\lVert w\rVert _{V}^{2}+\frac{C%
}{\epsilon^2\epsilon'}\lVert w\rVert^{2}\lVert u\rVert
_{\mathbb{L}^4}^{4}, \label{ineq 3 lemma tecnico}%
\end{equation}
where $C$ is a constant independent of $\epsilon$ and $\epsilon'$.
\end{lemma}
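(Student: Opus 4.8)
The plan is to prove the three claims of \autoref{lemma tecnico} by reducing everything to H\"older's inequality, the interpolation inequality \eqref{interpolation ineq}, and Young's inequality, which is the standard toolkit for the 2D Navier--Stokes trilinear form.

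\textbf{Step 1: the bound \eqref{ineq 1 lemma tecnico}.} Recall that $B(u,v)\in V^*$ is defined by $\langle B(u,v),\phi\rangle=-b(u,\phi,v)=-\int_{\Dom}(u\cdot\nabla\phi)\cdot v\,\dd x\dd z$. For fixed $t$, H\"older's inequality with exponents $4,2,4$ gives $|\langle B(u(t),v(t)),\phi\rangle|\le \|u(t)\|_{\mathbb{L}^4}\|\nabla\phi\|_{L^2}\|v(t)\|_{\mathbb{L}^4}\le \|u(t)\|_{\mathbb{L}^4}\|v(t)\|_{\mathbb{L}^4}\|\phi\|_V$, so $\|B(u(t),v(t))\|_{V^*}\le \|u(t)\|_{\mathbb{L}^4}\|v(t)\|_{\mathbb{L}^4}$. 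Squaring and integrating in time, then using Cauchy--Schwarz in $t$, yields $\int_0^T\|B(u,v)\|_{V^*}^2\,\dd t\le \big(\int_0^T\|u\|_{\mathbb{L}^4}^4\,\dd t\big)^{1/2}\big(\int_0^T\|v\|_{\mathbb{L}^4}^4\,\dd t\big)^{1/2}<\infty$, which is \eqref{ineq 1 lemma tecnico}.

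\textbf{Step 2: the bounds \eqref{ineq 2 lemma tecnico} and \eqref{ineq 3 lemma tecnico}.} For \eqref{ineq 2 lemma tecnico}, start from $|b(u,v,w)|\le \|u\|_{\mathbb{L}^4}\|\nabla v\|_{L^2}\|w\|_{\mathbb{L}^4}\le \|u\|_{\mathbb{L}^4}\|v\|_V\|w\|_{\mathbb{L}^4}$ by H\"older. Peel off $\|v\|_V$ with Young's inequality $ab\le \epsilon a^2+\tfrac{1}{4\epsilon}b^2$, producing $\epsilon\|v\|_V^2$ plus $\tfrac{C}{\epsilon}\|u\|_{\mathbb{L}^4}^2\|w\|_{\mathbb{L}^4}^2$. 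On the remaining factor apply \eqref{interpolation ineq} to $u$: $\|u\|_{\mathbb{L}^4}^2\le C\|u\|\,\|u\|_V$. Then use Young's inequality once more, in the form $\|u\|_V\cdot\big(\|u\|\|w\|_{\mathbb{L}^4}^2\big)\le \epsilon'\|u\|_V^2+\tfrac{C}{\epsilon'}\|u\|^2\|w\|_{\mathbb{L}^4}^4$, and absorb the constants to obtain the stated form with the $\tfrac{C}{\epsilon^2\epsilon'}$ prefactor. Inequality \eqref{ineq 3 lemma tecnico} is entirely symmetric: by the oddity relation $b(u,v,w)=-b(u,w,v)$ (valid for $u\in\mathbb{L}^4$, $v,w\in V$), one has $|b(u,v,w)|=|b(u,w,v)|$, so applying \eqref{ineq 2 lemma tecnico} with the roles of $v$ and $w$ interchanged gives exactly \eqref{ineq 3 lemma tecnico}; alternatively one repeats the H\"older--Young--interpolation chain placing the interpolation inequality on $w$ instead.

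\textbf{Main obstacle.} There is no genuine obstacle here — the result is, as the paper says, a routine adaptation of classical 2D Navier--Stokes estimates. The only point requiring a little care is bookkeeping the two independent small parameters $\epsilon,\epsilon'$ so that the final constant has the claimed homogeneity $\tfrac{C}{\epsilon^2\epsilon'}$ with $C$ independent of both; this forces the order of the two Young's inequalities (first split off $\|v\|_V^2$ at scale $\epsilon$, then split off $\|u\|_V^2$ at scale $\epsilon'$) and the use of \eqref{interpolation ineq} in between. One should also note in passing that \eqref{interpolation ineq} is stated for scalar $H^1(\Dom)$ functions and is applied componentwise, which is harmless since $V\embed H^1(\Dom;\R^2)$.
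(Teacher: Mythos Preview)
Your proof is correct and follows exactly the standard H\"older--interpolation--Young argument that the paper defers to the classical references (the paper does not spell out a proof of \autoref{lemma tecnico}, merely citing \cite{flandoli2023stochastic,lions1996mathematical,temam1995navier,temam2001navier}). The bookkeeping of the two small parameters and the order of the Young splittings is done correctly, yielding the claimed $\tfrac{C}{\epsilon^{2}\epsilon'}$ constant.
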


\begin{theorem}
\label{Thm deterministic well--posed}For every $\overline{u}_{0}\in H$ and $\overline{f}\in
L^{2}\left(  0,T;V^*\right)  $ there exists a unique weak solution of
equation \eqref{classical NS}. It satisfies%
\[
\lVert \overline{u}\left(  t\right)  \rVert^{2}+2\nu\int_{0}%
^{t}\lVert \nabla \overline{u}\left(  s\right)  \rVert _{L^{2}}^{2}\,\dd s=\lVert
\overline{u}_{0}\rVert^{2}+2\int_{0}^{t}\left\langle \overline{u}\left(  s\right)
,\overline{f}\left(  s\right)  \right\rangle_{V^*,V} \,\dd s.
\]
If $\left(  \overline{u}_{0}^{n}\right)  _{n\in\mathbb{N}}$ is a sequence in $H$
converging to $\overline{u}_{0}\in H$ and $\left(  \overline{f}^{n}\right)  _{n\in\mathbb{N}}$ is a
sequence in $L^{2}\left(  0,T;V^*\right)  $ converging to $\overline{f}\in
L^{2}\left(  0,T;V^*\right)  $, then the corresponding unique solutions
$\left(  \overline{u}^{n}\right)  _{n\in\mathbb{N}}$ converge to the corresponding
solution $\overline{u}$ in $C\left(  \left[  0,T\right]  ;H\right)  $ and in
$L^{2}\left(  0,T;V\right)  $.
\end{theorem}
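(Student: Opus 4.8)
The plan is to establish \autoref{Thm deterministic well--posed} by the classical Galerkin scheme, adapted to the present functional-analytic setting built around the Stokes operator $A$. First I would fix a sequence $(e_k)_{k\geq 1}$ of eigenfunctions of $A$ (or, more robustly, simply a basis of $\Do(A)$ that is orthonormal in $H$ and orthogonal in $V$, which exists since $A^{-1}$ is compact and self-adjoint on $H=\mathbb{L}^2$ thanks to $\Do(A^{1/2})=V\embed H$ compactly), let $H_n=\mathrm{span}(e_1,\dots,e_n)$ with $\Pi_n$ the orthogonal projection onto $H_n$, and pose the finite-dimensional ODE system $\frac{\dd}{\dd t}\langle \overline{u}^n,e_k\rangle = -\langle \overline{u}^n, A e_k\rangle + b(\overline{u}^n,\overline{u}^n,e_k)+\langle \overline{f},e_k\rangle_{V^*,V}$, $\langle \overline{u}^n(0),e_k\rangle=\langle \overline{u}_0,e_k\rangle$, for $k=1,\dots,n$. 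Local existence of $\overline{u}^n$ follows from Carathéodory's theorem since the nonlinearity is locally Lipschitz in the coefficients; the energy estimate below gives global existence.

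The energy estimate is obtained by multiplying the $k$-th equation by $\langle \overline{u}^n,e_k\rangle$ and summing over $k\leq n$: using the oddity relation $b(\overline{u}^n,\overline{u}^n,\overline{u}^n)=0$ (valid here since $\overline{u}^n\in\Do(A)\subseteq V$) one gets $\frac{1}{2}\frac{\dd}{\dd t}\|\overline{u}^n\|^2 + \|\nabla \overline{u}^n\|_{L^2}^2 = \langle \overline{f},\overline{u}^n\rangle_{V^*,V}$, and after Young's inequality and absorbing $\|\overline{u}^n\|_V^2$ (here I would use that $\|\nabla\cdot\|_{L^2}$ is an equivalent norm on $V$, which follows from the no-slip condition on $\Gamma_b$ via Poincaré) one obtains a uniform bound for $\overline{u}^n$ in $L^\infty(0,T;H)\cap L^2(0,T;V)$ in terms of $\|\overline{u}_0\|$ and $\|\overline{f}\|_{L^2(0,T;V^*)}$. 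Then, by \autoref{lemma tecnico} together with $V\embed \mathbb{L}^4$ and interpolation \eqref{interpolation ineq}, $\overline{u}^n$ is bounded in $L^4(0,T;\mathbb{L}^4)$ and consequently $B(\overline{u}^n,\overline{u}^n)$ is bounded in $L^2(0,T;V^*)$; since $A\overline{u}^n = A\Pi_n \overline{u}^n$ is controlled in $L^2(0,T;V^*)$ as well (using $\langle A\overline{u}^n,\phi\rangle=\langle \nabla\overline{u}^n,\nabla\Pi_n\phi\rangle$ and uniform boundedness of $\Pi_n$ on $V$), the equation gives a uniform bound on $\partial_t \overline{u}^n$ in $L^2(0,T;V^*)$. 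By Banach–Alaoglu and Aubin–Lions (with the triple $V\embed H\embed V^*$, the first embedding compact) one extracts a subsequence converging weakly-$*$ in $L^\infty(0,T;H)$, weakly in $L^2(0,T;V)$, and strongly in $L^2(0,T;H)$; this strong convergence is exactly what is needed to pass to the limit in the quadratic term, since $b(\overline{u}^n,\phi,\overline{u}^n)\to b(\overline{u},\phi,\overline{u})$ for fixed $\phi\in\Do(A)$ by writing the difference as $b(\overline{u}^n-\overline{u},\phi,\overline{u}^n)+b(\overline{u},\phi,\overline{u}^n-\overline{u})$ and using $\|\nabla\phi\|_{L^\infty}$-type control together with $L^2(0,T;H)$ convergence and $L^2(0,T;V)$ boundedness. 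The limit $\overline{u}$ then satisfies \autoref{Def NS determ}; the initial condition passes to the limit because $\overline{u}^n\to\overline{u}$ also weakly in $C([0,T];H_{\mathrm{w}})$, and $\overline{u}\in C([0,T];H)$ follows from $\overline{u}\in L^2(0,T;V)$, $\partial_t\overline{u}\in L^2(0,T;V^*)$ via the Lions–Magenes lemma, which in the same stroke yields the energy \emph{equality} (not just inequality) in the statement, since in dimension two $B(\overline{u},\overline{u})\in L^2(0,T;V^*)$ makes $t\mapsto\|\overline{u}(t)\|^2$ absolutely continuous with the stated derivative and $\langle B(\overline{u},\overline{u}),\overline{u}\rangle=0$.

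For uniqueness and the continuous dependence claim, I would take two solutions $\overline{u},\overline{v}$ with data $(\overline{u}_0,\overline{f})$, $(\overline{v}_0,\overline{g})$, set $\overline{w}=\overline{u}-\overline{v}$, and test the difference equation against $\overline{w}$ itself (legitimate by the energy equality applied to $\overline{w}$, whose regularity is that of a 2D weak solution): this gives $\frac{1}{2}\frac{\dd}{\dd t}\|\overline{w}\|^2+\|\nabla\overline{w}\|_{L^2}^2 = -b(\overline{w},\overline{v},\overline{w})+\langle \overline{f}-\overline{g},\overline{w}\rangle_{V^*,V}$, and the crucial nonlinear term is estimated by \eqref{ineq 2 lemma tecnico} (or Ladyzhenskaya's inequality $|b(\overline{w},\overline{v},\overline{w})|\lesssim \|\overline{w}\|\,\|\nabla\overline{w}\|_{L^2}\,\|\overline{v}\|_{\mathbb{L}^4}\lesssim \tfrac12\|\nabla\overline{w}\|_{L^2}^2 + C\|\overline{v}\|_{\mathbb{L}^4}^4\|\overline{w}\|^2$), so that after absorbing the dissipation one lands on $\frac{\dd}{\dd t}\|\overline{w}\|^2 \leq \varphi(t)\|\overline{w}\|^2 + \|\overline{f}-\overline{g}\|_{V^*}^2$ with $\varphi\in L^1(0,T)$ because $\overline{v}\in L^4(0,T;\mathbb{L}^4)$; Grönwall then gives both uniqueness (when the data coincide) and the quantitative stability estimate $\sup_{[0,T]}\|\overline{w}(t)\|^2 + \int_0^T\|\nabla\overline{w}\|_{L^2}^2 \lesssim (\|\overline{u}_0-\overline{v}_0\|^2 + \|\overline{f}-\overline{g}\|_{L^2(0,T;V^*)}^2)\exp(\int_0^T\varphi)$, from which convergence of $\overline{u}^n\to\overline{u}$ in $C([0,T];H)\cap L^2(0,T;V)$ for convergent data is immediate. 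The main obstacle is the passage to the limit in the nonlinear term — everything hinges on producing the strong $L^2(0,T;H)$ compactness via Aubin–Lions, which in turn requires the uniform $L^2(0,T;V^*)$ bound on $\partial_t\overline{u}^n$; the slightly non-standard point here, compared with the textbook no-slip Stokes operator, is to check that the projections $\Pi_n$ associated with our $A$ are uniformly bounded on $V=\Do(A^{1/2})$ and that $\|\nabla\cdot\|_{L^2}$ is an equivalent $V$-norm in the present mixed boundary-condition setting, both of which follow from \autoref{t:bounded_H_infty} (self-adjointness and the $H^\infty$-calculus of $A=A_2$) and the Poincaré inequality afforded by $u|_{\Gamma_b}=0$.
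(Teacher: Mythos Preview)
Your argument is correct and is precisely the classical Galerkin/Aubin--Lions route from the references the paper invokes. Note, however, that the paper does \emph{not} prove \autoref{Thm deterministic well--posed}: it is stated as a known result with the sentence ``The following results are simple adaptations of classical results, see for instance \cite{flandoli2023stochastic,lions1996mathematical,temam1995navier,temam2001navier}'' and no proof is given. So there is no ``paper's own proof'' to compare against; you have supplied exactly the kind of standard proof those references contain, correctly adapted to the mixed boundary conditions via the properties of $A$ (self-adjointness, compact resolvent, $\|\nabla\cdot\|_{L^2}$ as an equivalent norm on $V$ from Poincar\'e on $\Gamma_b$).
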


\subsection{Stochastic Maximal $L^p$-regularity}\label{sec: stoch maximal regularity}
Let $\mathcal{H}$ and $(W_{\mathcal{H}}(t))_{t\geq 0}$ be a Hilbert space and a cylindrical $\mathcal{F}-$Brownian motion on $\mathcal{H}$, respectively.
The following result was proven in \cite{VNeerven}, see also \cite[Section 7]{NVVW15_survey} and \cite[Section 3]{AV22_QSEE1} for additional references. 
Below, for a Banach space $Y$, $H^{s,q}(\R_+;Y)$ denotes the $Y$-valued Bessel potential space on $\R_+$ with smoothness $s\in \R$ and integrability $q$; such space can be defined either by complex interpolation (see e.g.\ \cite[Chapter 3, Section 4.5]{pruss2016moving}) or by restriction from $\R$ (see e.g.\ \cite[Subsection 3.1]{ALV23}).
For the notion of $H^{\infty}$-calculus and $\gamma$-radonifying operators $\gamma(\mathcal{H},Y)$ we refer to \cite[Chapter 9 and 10]{Analysis2}.

\begin{theorem}\label{max regularity}
    
    Let $X$ be a Banach space isomorphic to a closed subspace of $L^q(D,\mu)$ where $q\in [2,+\infty)$ and $(D,\mathscr{A},\mu)$ is a $\sigma$-finite measure space. Let $\mathcal{A}$ be an invertible operator and assume that it admits a bounded $H^{\infty}$ calculus of angle $<\pi/2$ on $X$ and let $(\mathcal{S}(t))_{t\geq 0}$ the bounded analytic semigroup generated by $-\mathcal{A}$. For all $\mathcal{F}-$adapted $G\in L^p(\mathbb{R}_+\times \Omega;\gamma(\mathcal{H};X))$ the stochastic convolution process
    \begin{align*}
        U(t)=\int_0^t \mathcal{S}(t-s)G(s)\,\dd W_{\mathcal{H}}(s)\quad t\geq 0,
    \end{align*}
    is well defined in $X$, takes values in the fractional domain $\Do(\mathcal{A}^{1/2})$ almost surely and for all $2<p<+\infty$ the following space-time regularity estimate holds: $\forall \theta\in [0,\frac{1}{2})$
    \begin{align}\label{maximal regularity inequality}
    \mathbf{E}\left[\lVert U(t) \rVert^p_{H^{\theta,p}(\mathbb{R}_+;\Do(\mathcal{A}^{1/2-\theta}))} \right]\leq C_{\theta}^p\mathbf{E}\left[\lVert G\rVert_{L^p(\mathbb{R}_+;L^q(O;\mathcal{H}))}^p\right]        
    \end{align}
    with a constant $C_{\theta}$ independent of $G$.
\end{theorem}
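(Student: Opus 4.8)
The plan is to reduce this to the version of stochastic maximal $L^p$-regularity already available in the literature, namely the results of van Neerven--Veraar--Weis and their refinements, and then to unpack the regularity statement in the form stated. Recall that the abstract theorem says: if $-\mathcal{A}$ generates an analytic semigroup admitting a bounded $H^\infty$-calculus of angle $<\pi/2$ on a UMD Banach space $X$ of type $2$ (which is automatic for closed subspaces of $L^q(D,\mu)$ with $q\in[2,\infty)$), then $\mathcal{A}$ has \emph{stochastic maximal $L^p$-regularity}, i.e.\ the stochastic convolution $U=\mathcal{S}\diamond G$ maps $L^p(\mathbb{R}_+\times\Omega;\gamma(\mathcal{H};X))$ boundedly into $L^p(\Omega;L^p(\mathbb{R}_+;\Do(\mathcal{A}^{1/2})))$ together with a trace/time-regularity gain. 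Since $A_q$ (more precisely the operator $\mathcal{A}$ in the statement) is assumed invertible, there is no need to pass to $\mathcal{A}+\delta$ for $\delta>0$, and one works on the whole half-line $\mathbb{R}_+$ rather than on a bounded interval.

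First I would verify the structural hypotheses: a closed subspace $X$ of $L^q(D,\mu)$ with $q\in[2,\infty)$ is a UMD space with Pisier's property $(\alpha)$ and of (Rademacher) type $2$; these are the standing assumptions under which the stochastic maximal regularity theory of \cite{VNeerven} applies. Combined with the assumed bounded $H^\infty$-calculus of angle $<\pi/2$ for the invertible operator $\mathcal{A}$, the cited result gives: for all $\mathcal{F}$-adapted $G\in L^p(\mathbb{R}_+\times\Omega;\gamma(\mathcal{H};X))$ the process $U$ is well defined, takes values in $\Do(\mathcal{A}^{1/2})$ a.s., and
\begin{align*}
\mathbf{E}\,\lVert \mathcal{A}^{1/2}U\rVert^p_{L^p(\mathbb{R}_+;X)}
\leq C^p\,\mathbf{E}\,\lVert G\rVert^p_{L^p(\mathbb{R}_+;\gamma(\mathcal{H};X))}.
\end{align*}
Next, to obtain the joint space-time smoothness $H^{\theta,p}(\mathbb{R}_+;\Do(\mathcal{A}^{1/2-\theta}))$ for $\theta\in[0,\tfrac12)$, I would invoke the fact (also part of the van Neerven--Veraar--Weis circle of results, and explicitly recorded in \cite[Section 7]{NVVW15_survey} and \cite[Section 3]{AV22_QSEE1}) that the stochastic convolution operator commutes with fractional powers of $\mathcal{A}$ and with the fractional time-derivative $\partial_t^{\theta}$ in an appropriate sense: writing $\mathcal{A}^{\theta}$ and a half-line Bessel-potential realisation $J^{\theta}$ of order $\theta$ in time, one has $\mathcal{A}^{1/2-\theta}J^{\theta}U = \mathcal{A}^{1/2-\theta}(J^{\theta}\mathcal{A}^{\theta})(\mathcal{A}^{-\theta}U)$, and the operator $\mathcal{A}^{\theta}J^{\theta}$, or rather the associated singular-integral manipulation, is controlled using that $\mathcal{A}$ has bounded imaginary powers (a consequence of the $H^\infty$-calculus). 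Concretely, one reduces the $\theta>0$ estimate to the $\theta=0$ case applied to a modified integrand, using that $\gamma(\mathcal{H};X)$-valued stochastic integration and deterministic smoothing in $t$ interact as in the scalar case because $X$ has type $2$ and property $(\alpha)$.

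Finally I would assemble the estimate \eqref{maximal regularity inequality} with the constant $C_\theta$ depending only on $\theta$ (and on the $H^\infty$-calculus bounds of $\mathcal{A}$, the UMD/type-$2$ constants of $X$, and $p$, $q$), and note that the right-hand side $\mathbf{E}\,\lVert G\rVert^p_{L^p(\mathbb{R}_+;L^q(D;\mathcal{H}))}$ is exactly $\mathbf{E}\,\lVert G\rVert^p_{L^p(\mathbb{R}_+;\gamma(\mathcal{H};X))}$ once one recalls the identification $L^q(D;\mathcal{H})=\gamma(\mathcal{H};L^q(D))$ (the Fubini-type identity recorded in \Cref{sss:notation}) together with the ideal property of $\gamma$-radonifying operators to descend from $L^q(D)$ to the closed subspace $X$. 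The main obstacle — or rather the only non-bookkeeping point — is the $\theta>0$ case: justifying that the fractional-in-time smoothing can be pulled inside the stochastic convolution and absorbed into a fractional power of $\mathcal{A}$, which is where the bounded $H^\infty$-calculus (and not merely sectoriality) is genuinely used. Everything else is a direct citation of \cite{VNeerven} and standard properties of $\gamma$-radonifying operators; since the statement is phrased as ``was proven in \cite{VNeerven}'', the proof here can legitimately be limited to checking that the hypotheses of that reference are met and recording the identifications above.
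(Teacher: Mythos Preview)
The paper does not give a proof of this theorem at all: it is stated as a quotation of a known result, with the sentence ``The following result was proven in \cite{VNeerven}, see also \cite[Section 7]{NVVW15_survey} and \cite[Section 3]{AV22_QSEE1}'' preceding the statement, and only a remark on extensions following it. Your proposal correctly recognizes this and in fact supplies more than the paper does, namely a sketch of why the structural hypotheses (UMD, type $2$, property $(\alpha)$) are met for closed subspaces of $L^q$ and how the $\theta>0$ case is obtained; this is consistent with the cited references, so there is nothing to correct.
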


For extensions of the above result we refer to \cite{AV20,LV21} for the weighted case, and to \cite[Subsection 6.2]{ALV23} for the case of homogeneous spaces.
However, the latter situations will not be considered here.

\section{Well-Posedness}\label{sec well posed}
\subsection{Stokes Equations}\label{sec regularity mild stokes}
As discussed in \Cref{sec:main results}, we start by considering the linear problem \eqref{Linear stochastic}.
According to \cite{da1993evolution,da1996ergodicity}, the mild solution $w$ of the former problem is formally given by
\begin{align}\label{Mild equation linear}
    w(t)=A_q\int_0^t S_q(t-s)\mathcal{N}[h_b(s)]\,\dd W_{\mathcal{H}}(s).
\end{align}
Here $A_q$ is (minus) the Stokes operator with homogeneous boundary conditions, and $(S_q(t))_{t\geq 0}$ its corresponding semigroup (cf.\ \autoref{t:bounded_H_infty}).

Next step is to prove that $w(t)$ is well defined in some functional spaces and has some regularities useful to treat the nonlinearity of the Navier-Stokes equations.

\begin{proposition}\label{regularity stokes}
    Let $\alpha\in [0,\frac{1}{q}]$ and assume that $h_b:(0,T)\times \Omega\to W^{-\alpha,q}(\Gamma_u;\mathcal{H})$ is $\mathcal{F}$-progressive measurable with $\mathbf{P}-a.s.$ paths in  $L^p(0,T;W^{-\alpha,q}(\Gamma_u))$. Then the process $w$ defined in \eqref{Mild equation linear} is a well defined process with  $\mathbf{P}-a.s.$ paths in 
    \begin{equation*}
     H^{\theta,p}(0,T;\Do(A_q^{\frac{1}{2q}-\frac{\alpha}{2}-\theta-\epsilon})) \ \ \text{ for all }\  \theta\in [0,\tfrac{1}{2}),\ \epsilon>0.   
    \end{equation*}
In particular, if $h_b$ satisfies \autoref{assumptions boundary noise}, then $w$ has $\mathbf{P}-a.s.$ trajectories in $ C([0,T];H)\cap L^4(0,T;\mathbb{L}^4)$.
\end{proposition}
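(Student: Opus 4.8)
The plan is to obtain both the well-posedness and the space-time regularity of $w$ from the stochastic maximal $L^p$-regularity estimate of \autoref{max regularity}, applied to the Stokes operator $A_q$ on $X=\Ls^q$ (a closed subspace of $L^q(\Dom;\R^2)$, with $q\geq2$, which is invertible and has a bounded $H^{\infty}$-calculus of angle $<\tfrac\pi2$ by \autoref{t:bounded_H_infty}), after transferring the unbounded factor $A_q$ appearing in \eqref{Mild equation linear} onto the Neumann map $\NM$ via \autoref{Corollary regularity Neumann map}. Throughout, for $\kappa<0$ the space $\Do(A_q^{\kappa})$ is read as the extrapolation space $X_{\kappa,A_q}$. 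Concretely, fix $\theta\in[0,\tfrac12)$ and $\epsilon>0$ small, and set $\beta:=\tfrac{1-\alpha}{2}+\tfrac1{2q}-\epsilon$; since $\alpha\in[0,\tfrac1q]$ and $q>1$ one has $\beta\in(0,1)$ (indeed $\beta\in[\tfrac12-\epsilon,\tfrac12+\tfrac1{2q}-\epsilon]$). By \eqref{it:mapping_NM1} of \autoref{Corollary regularity Neumann map}, $\NM\in\mathscr{L}\big(W^{-\alpha,q}(\Gamma_u;\mathcal{H});\gamma(\mathcal{H},\Do(A_q^{\beta}))\big)$, hence $G:=A_q^{\beta}\NM[h_b]$ is $\F$-progressively measurable, has $\mathbf{P}$-a.s.\ paths in $L^p(0,T;\gamma(\mathcal{H},\Ls^q))$, and satisfies $\|G(s)\|_{\gamma(\mathcal{H},\Ls^q)}\lesssim\|h_b(s)\|_{W^{-\alpha,q}(\Gamma_u;\mathcal{H})}$. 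As fractional powers of $A_q$ commute with $(S_q(t))_{t\geq0}$ and with the stochastic integral (being closed operators), and $A_q^{-\beta}\in\mathscr{L}(\Ls^q)$ since $0\in\rho(A_q)$, the process in \eqref{Mild equation linear} can be written as $w=A_q^{1-\beta}V$, where $V(t)=\int_0^t S_q(t-s)G(s)\,\dd W_{\mathcal{H}}(s)$ is the ordinary stochastic convolution of $G$; interpreted in the extrapolation scale of $A_q$, this rewriting is what gives rigorous meaning to the formal expression \eqref{Mild equation linear}.

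Next I would apply \autoref{max regularity} to $G$. Since it is stated for inputs in $L^p(\mathbb{R}_+\times\Omega;\gamma(\mathcal{H},\Ls^q))$, whereas $G$ only has $\mathbf{P}$-a.s.\ $L^p$-integrable paths, I first localize: extend $G$ by $0$ on $(T,\infty)$, put $\tau_n=\inf\{t\geq0:\|G\|_{L^p(0,t;\gamma(\mathcal{H},\Ls^q))}>n\}\wedge T$, apply \autoref{max regularity} to $G\one_{[0,\tau_n]}$, and let $n\to\infty$; because $h_b\in L^p(0,T;W^{-\alpha,q}(\Gamma_u;\mathcal{H}))$ $\mathbf{P}$-a.s.\ we have $\tau_n=T$ for $n$ large enough $\mathbf{P}$-a.s., and the locality of the stochastic convolution then gives that $V$ has $\mathbf{P}$-a.s.\ paths in $H^{\theta,p}(0,T;\Do(A_q^{1/2-\theta}))$. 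By \eqref{eq:duality_fractional_scale} and \autoref{t:bounded_H_infty}, $A_q^{1-\beta}$ is an isomorphism $X_{\sigma,A_q}\to X_{\sigma-(1-\beta),A_q}$ for every $\sigma\in\R$; applying it pointwise in $t$ to $V$ yields $w=A_q^{1-\beta}V\in H^{\theta,p}(0,T;X_{1/2-\theta-(1-\beta),A_q})$, and since $\tfrac12-\theta-(1-\beta)=\tfrac1{2q}-\tfrac{\alpha}{2}-\theta-\epsilon$ and $\theta,\epsilon$ were arbitrary, this is precisely the first assertion.

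For the last statement, under \autoref{assumptions boundary noise} I would combine the above with Sobolev embeddings, using $\Do(A_q^{\kappa})\hookrightarrow\Hs^{2\kappa,q}(\Dom)$ for $\kappa\in[0,\tfrac1{2q})$ from \autoref{t:bounded_H_infty}. Taking $\theta=0$ gives, for $\epsilon$ small, $w\in L^p(0,T;\Do(A_q^{\frac1{2q}-\frac{\alpha}{2}-\epsilon}))\hookrightarrow L^p(0,T;\Hs^{\frac1q-\alpha-2\epsilon,q}(\Dom))\hookrightarrow L^p(0,T;\mathbb{L}^4)\hookrightarrow L^4(0,T;\mathbb{L}^4)$, where the embedding $H^{\frac1q-\alpha-2\epsilon,q}(\Dom)\hookrightarrow L^4(\Dom;\R^2)$ (in dimension two) holds for $\epsilon$ small precisely because $\tfrac12-\tfrac1q-\alpha>0$, and $L^p\hookrightarrow L^4$ in time because $p>2q>4$ and $T<\infty$. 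Taking instead $\theta\in(\tfrac1p,\tfrac1{2q}-\tfrac{\alpha}{2})$ — a nonempty interval since $\alpha<\tfrac1q-\tfrac2p$ — and $\epsilon>0$ so small that $\kappa:=\tfrac1{2q}-\tfrac{\alpha}{2}-\theta-\epsilon\geq0$, one gets $w\in H^{\theta,p}(0,T;\Do(A_q^{\kappa}))\hookrightarrow C([0,T];\Do(A_q^{\kappa}))\hookrightarrow C([0,T];H)$, using the Sobolev embedding in time ($\theta p>1$) and $\Do(A_q^{\kappa})\hookrightarrow\Ls^q\hookrightarrow\Ls^2=H$ on the bounded domain $\Dom$ with $q\geq2$.

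The main obstacle is the bookkeeping of fractional powers in the first two paragraphs: one must move the unbounded operator $A_q$ from \eqref{Mild equation linear} onto $\NM$ entirely within the extrapolation scale of $A_q$, where \autoref{Corollary regularity Neumann map} supplies exactly the smoothing needed up to the unavoidable $\epsilon$-loss, and then check that the numerology of \autoref{assumptions boundary noise} — namely $\alpha<\tfrac1q-\tfrac2p$ and $\tfrac12-\tfrac1q-\alpha>0$ — is precisely what makes the closing embeddings into $C([0,T];H)$ and $L^4(0,T;\mathbb{L}^4)$ work. The remaining ingredients (the localization, the locality of the stochastic convolution, and the transference arguments used to commute $A_q^{1-\beta}$ past the stochastic integral) are standard in this circle of ideas.
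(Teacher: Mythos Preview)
Your proof is correct and follows essentially the same route as the paper: both arguments transfer the unbounded factor $A_q$ in \eqref{Mild equation linear} onto the Neumann map via \autoref{Corollary regularity Neumann map} (your $\beta$ equals the paper's exponent $\tfrac{1-\alpha}{2}+\tfrac{1}{2q}-\epsilon$, and your $V$ is the paper's $\widetilde{w}$), reduce to $L^p(\Omega)$-integrable data by a stopping-time localization, apply \autoref{max regularity}, and close with Sobolev embeddings driven by the numerology of \autoref{assumptions boundary noise}. Your final paragraph is in fact slightly more explicit than the paper's about the precise choices of $\theta$ yielding the embeddings into $C([0,T];H)$ and $L^4(0,T;\mathbb{L}^4)$, but the strategies coincide.
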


 \begin{proof}
     By replacing $h_b$ by $\one_{[0,\tau_n]\times \Omega}h_{b}$, $\tau_n$ being the following stopping time 
     \begin{equation*}
    \tau_n:=\{t\in [0,T]\,:\, \|h\|_{L^p(0,t;W^{-\alpha,q}(\Gamma_u;\mathcal{H}))}\geq n\} \quad \text{ where }\quad \inf\varnothing :=T,
     \end{equation*}
     for all $n\geq 1$, it is enough to consider the case $h_b\in L^p((0,T)\times \Omega;W^{-\alpha,q}(\Gamma_u;\mathcal{H}))$.
     
     Let $\epsilon>0$ be fixed later. From \autoref{Corollary regularity Neumann map} and \autoref{max regularity} we have that $ \mathbf{P}-a.s.$ and  for each $\theta\in [0,\frac{1}{2})$
     \begin{align*}
         \widetilde{w}=\int_0^\cdot S_q(\cdot-s)A_q^{\frac{1-\alpha}{2}+\frac{1}{2q}-\epsilon}\mathcal{N}[h_b(s) ]\,\dd W_{\mathcal{H}}(s)\in H^{\theta,p}(0,T;\Do(A_q^{1/2-\theta})) \quad
     \end{align*}
     Therefore, $\mathbf{P}-a.s.$,  
 \begin{equation*}
     w=A_q^{\frac{1+\alpha}{2}-\frac{1}{2q}+\epsilon}\widetilde{w}\in  H^{\theta,p}(0,T;\Do(A_q^{\frac{1}{2q}-\frac{\alpha}{2}-\theta-\epsilon})).
 \end{equation*}
Finally, note that, by \autoref{assumptions boundary noise}, \autoref{t:bounded_H_infty} and the Sobolev embeddings (see e.g.\ \cite[Proposition 2.7]{AV22_QSEE1}) we can find $\theta_1,\theta_2\in [0,\frac{1}{2})$ and $\varepsilon>0$ such that 
\begin{align*}
          H^{\theta_1,p}(0,T;\Do(A_q^{\frac{1}{2q}-\frac{\alpha}{2}-\theta_1-\epsilon}))&\hookrightarrow C([0,T];H),\\ 
          H^{\theta_2,p}(0,T;\Do(A_q^{\frac{1}{2q}-\frac{\alpha}{2}-\theta_2-\epsilon}))&\hookrightarrow L^4(0,T;\mathbb{L}^4).
     \end{align*}
     where the first embedding follows from $\alpha<\frac{1}{q}-\frac{2}{p}$ and the second one from the remaining conditions in \autoref{assumptions boundary noise}.
     Hence the proof is complete.
     \end{proof}

\subsection{Auxiliary Navier--Stokes Type Equations}\label{sec regularity nonlinear auxiliary}

Having solved the Stokes problem we
introduce the auxiliary variable%
\[
v\left(  t\right)  =u\left(  t\right)  -w(  t),
\]
which satisfies \eqref{modified NS}, i.e.
\begin{equation*}
\left\{
\begin{aligned}
\partial_{t}v+\left(  v+w\right)  \cdot\nabla\left(  v+w\right)
+\nabla\left(  P-\rho\right)   &  =\nu\Delta v, &\text{ on }&(0,T)\times \Dom,\\
\operatorname{div}v  & =0,&\text{ on }&(0,T)\times \Dom,\\
v   & = 0, &\text{ on }&(0,T)\times \Gamma_b,\\
\partial_{z}v_1   & =0, &\text{ on }&(0,T)\times \Gamma_u,\\
v_2    & =0, &\text{ on }&(0,T)\times \Gamma_u, \\ 
v(0)   & =u_0, &\text{ on }&\Dom .  
\end{aligned}
\right.
\end{equation*}
This first equation in the above system has the form%
\begin{equation*}
\partial_{t}v+v\cdot\nabla v+\nabla\pi  =\nu\Delta v-L\left(  v,w\right) 
\end{equation*}
with the affine function%
\[
L\left(  v,w\right)  =v\cdot\nabla w+w\cdot\nabla v+w\cdot\nabla w.
\]
For each $\omega\in \Omega$ fixed, the Navier--Stokes structure is preserved and the auxiliary
equation for $v$ with homogeneous boundary conditions is solvable similarly to the classical Navier--Stokes
equations. 
Therefore, let us introduce the notion of weak solution of the deterministic problem \eqref{modified NS} with random coefficients.
Recall that $A$ and $b$ are (minus) the Stokes operator on $\Ls^2$ and defined in \eqref{def:b_trilinear_form}, respectively.

\begin{definition}
\label{Def NS determ modified}Given $u_0\in H$ and $w\in L^{4}\left(  0,T;\mathbb{L}^{4}\right)  $,
we say that
\[
v\in C\left(  \left[  0,T\right]  ;H\right)  \cap L^{2}\left(  0,T;V\right)
\]
is a weak solution of equation \eqref{modified NS} if
\begin{align*}
&  \left\langle v\left(  t\right)  ,\phi\right\rangle -\int_{0}^{t}b\left(
v\left(  s\right)  +w\left(  s\right)  ,\phi,v\left(  s\right)  +w\left(
s\right)  \right)  \,\dd s\\
&  =\left\langle u_0,\phi\right\rangle -\int_{0}^{t}\left\langle v\left(
s\right)  ,A\phi\right\rangle \,\dd s
\end{align*}
for every $\phi\in \Do\left(  A\right)  $ and $t\in [0,T]$.
\end{definition}
\begin{theorem}
\label{Thm deterministic well--posed modified}For every $u_0\in H$ and $w\in L^{4}\left(  0,T;\mathbb{L}%
^{4}\right)  $, there exists a unique weak solution $v$ of equation
\eqref{modified NS}. Moreover, $v$ satisfies for all $t\in [0,T]$%
\begin{align}\label{energy equality modified}
&  \lVert v\left(  t\right)  \rVert^{2}+2\int_{0}%
^{t}\lVert \nabla v\left(  s\right)  \rVert _{L^{2}}^{2}\,\dd s=\lVert
u_0\rVert^{2} +2\int_{0}^{t}\left(  b\left(  v,v,w\right) 
+b\left(  w,v,w\right)  \right)  \left(  s\right)  \,\dd s.
\end{align}
If $\left(  u_{0}^{n}\right)  _{n\in\mathbb{N}}$ is a sequence in $H$
converging to $u_{0}\in H$ and $\left(  w^{n}\right)  _{n\in\mathbb{N}}$ is a
sequence in $L^{4}\left(  0,T;\mathbb{L}%
^{4}\right) $ converging to $w\in
L^{4}\left(  0,T;\mathbb{L}%
^{4}\right)  $, then the corresponding unique solutions
$\left(  v^{n}\right)  _{n\in\mathbb{N}}$ converge to the corresponding
solution $v$ in $C\left(  \left[  0,T\right]  ;H\right)  $ and in
$L^{2}\left(  0,T;V\right)  $.
\end{theorem}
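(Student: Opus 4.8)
The plan is to treat \eqref{modified NS} pathwise, i.e.\ for each fixed $\omega\in\Omega$, as a deterministic Navier--Stokes system with a fixed coefficient field $w=w(\omega)\in L^4(0,T;\mathbb{L}^4)$, and to mimic the classical Galerkin/energy-estimate proof of \autoref{Thm deterministic well--posed}. First I would set up a Galerkin scheme: let $(e_k)_{k\geq1}$ be the eigenbasis of $A$ in $H$, let $H_n=\mathrm{span}(e_1,\dots,e_n)$ and $P_n$ the orthogonal projection onto $H_n$, and consider the finite-dimensional ODE for $v^n(t)=\sum_{k=1}^n c_k^n(t)e_k$ obtained by projecting the weak formulation of \autoref{Def NS determ modified}. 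Because the drift involves $v^n\cdot\nabla v^n$, $v^n\cdot\nabla w$, $w\cdot\nabla v^n$ and $w\cdot\nabla w$, and $w$ is only $L^4$ in time, the ODE has a Carath\'eodory right-hand side, so local existence follows from the Carath\'eodory existence theorem; global existence on $[0,T]$ will come from the a priori bound below.

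Next I would derive the key energy estimate. Testing the Galerkin equation with $v^n$ itself and using the oddity relation $b(u,\cdot,\cdot)$ (so that $b(v^n,v^n,v^n)=0$), one gets
\[
\tfrac12\tfrac{\dd}{\dd t}\lVert v^n\rVert^2+\lVert\nabla v^n\rVert_{L^2}^2
=b(v^n,v^n,w)+b(w,v^n,w),
\]
after grouping the terms coming from $L(v^n,w)$ exactly as in the announced identity \eqref{energy equality modified}. Now I would estimate the right-hand side using \autoref{lemma tecnico}: apply \eqref{ineq 2 lemma tecnico} to $b(v^n,v^n,w)$ (with the roles arranged so the $V$-norm of $v^n$ appears twice with small constants $\epsilon,\epsilon'$, and the remainder is $C\epsilon^{-2}(\epsilon')^{-1}\lVert v^n\rVert^2\lVert w\rVert_{\mathbb{L}^4}^4$), and apply \eqref{ineq 3 lemma tecnino} — I mean \eqref{ineq 3 lemma tecnico} — to $b(w,v^n,w)$ (so that only one $\epsilon\lVert v^n\rVert_V^2$ term appears and the remainder is $C\epsilon^{-2}(\epsilon')^{-1}\lVert w\rVert^2\lVert w\rVert_{\mathbb{L}^4}^4\in L^1(0,T)$). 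Absorbing the $\lVert\nabla v^n\rVert_{L^2}^2$ contributions into the left-hand side, we obtain
\[
\tfrac{\dd}{\dd t}\lVert v^n(t)\rVert^2+\lVert\nabla v^n(t)\rVert_{L^2}^2
\leq C\lVert w(t)\rVert_{\mathbb{L}^4}^4\lVert v^n(t)\rVert^2+g(t),
\]
with $g\in L^1(0,T)$ depending only on $\lVert w\rVert_{L^4(0,T;\mathbb{L}^4)}$ and $\lVert v^n(0)\rVert\leq\lVert u_0\rVert$. Since $\lVert w\rVert_{\mathbb{L}^4}^4\in L^1(0,T)$, Gr\"onwall's lemma gives a uniform-in-$n$ bound on $v^n$ in $L^\infty(0,T;H)\cap L^2(0,T;V)$, hence global existence of $v^n$ and uniform energy bounds.

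Then I would pass to the limit. From the uniform bounds, extract a subsequence with $v^n\rightharpoonup v$ weakly-$*$ in $L^\infty(0,T;H)$ and weakly in $L^2(0,T;V)$; the nonlinear and coupling terms are controlled via $B(v^n+w,v^n+w)\in L^2(0,T;V^*)$ uniformly (by \eqref{ineq 1 lemma tecnico} of \autoref{lemma tecnico} together with the uniform $L^4(0,T;\mathbb{L}^4)$ bound on $v^n$ coming from interpolation \eqref{interpolation ineq}), so $\partial_t v^n$ is bounded in $L^2(0,T;V^*)$ and the Aubin--Lions lemma yields strong convergence $v^n\to v$ in $L^2(0,T;H)$ (indeed in $L^4(0,T;\mathbb{L}^4)$ after a further interpolation/embedding). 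This strong convergence suffices to identify the limits of $b(v^n+w,\phi,v^n+w)$ for $\phi\in\Do(A)$, so $v$ is a weak solution in the sense of \autoref{Def NS determ modified}; continuity $v\in C([0,T];H)$ and the energy \emph{equality} \eqref{energy equality modified} follow from the Lions--Magenes lemma (using $v\in L^2(0,T;V)$, $\partial_t v\in L^2(0,T;V^*)$) exactly as in the classical case, the only subtlety being that the forcing $-L(v,w)$ is no longer in $L^2(0,T;V^*)$ but the combination $B(v+w,v+w)-B(v,v)$ still is. Uniqueness and continuous dependence: given two solutions $v^{(1)},v^{(2)}$ with data $(u_0^{(1)},w^{(1)})$, $(u_0^{(2)},w^{(2)})$, set $\delta=v^{(1)}-v^{(2)}$, $\eta=w^{(1)}-w^{(2)}$; test the difference equation with $\delta$, use the oddity relations and \autoref{lemma tecnico} to bound $\tfrac{\dd}{\dd t}\lVert\delta\rVert^2+\lVert\nabla\delta\rVert_{L^2}^2\leq C(\lVert w^{(1)}\rVert_{\mathbb{L}^4}^4+\lVert w^{(2)}\rVert_{\mathbb{L}^4}^4)\lVert\delta\rVert^2+(\text{terms linear in }\eta)$, and conclude by Gr\"onwall; taking $\eta=0$, $u_0^{(1)}=u_0^{(2)}$ gives uniqueness, and the general inequality gives convergence $v^n\to v$ in $C([0,T];H)\cap L^2(0,T;V)$ when $(u_0^n,w^n)\to(u_0,w)$.

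The main obstacle is the low time-regularity of the coefficient $w$: it is only $L^4(0,T;\mathbb{L}^4)$, so $L(v,w)$ is not an $L^2(0,T;V^*)$ forcing and one cannot directly invoke \autoref{Thm deterministic well--posed} as a black box. The resolution is precisely the bilinear-estimate bookkeeping in \autoref{lemma tecnico}: inequalities \eqref{ineq 2 lemma tecnico}--\eqref{ineq 3 lemma tecnico} are tailored so that every dangerous term is either absorbed into $\epsilon\lVert v\rVert_V^2$ or bounded by $C\lVert w\rVert_{\mathbb{L}^4}^4\lVert v\rVert^2$ (Gr\"onwall-integrable in time) or by a fixed $L^1(0,T)$ function of $w$ alone — which is exactly enough to run the energy method. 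A secondary point is that the time-dependent coefficient makes the Galerkin ODE only Carath\'eodory rather than smooth, but this costs nothing beyond invoking the Carath\'eodory existence theorem in place of Picard--Lindel\"of.
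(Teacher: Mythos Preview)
Your Galerkin--compactness argument is correct, but the paper takes a genuinely different route: it proves existence by a \emph{Picard iteration} built on \autoref{Thm deterministic well--posed}. Concretely, the paper sets $v^0=0$ and defines $v^{n+1}$ as the solution of the \emph{classical} Navier--Stokes system \eqref{classical NS} with forcing $-B(v^n,w)-B(w,v^n)-B(w,w)\in L^2(0,T;V^*)$ (this inclusion holds by \eqref{ineq 1 lemma tecnico}), then shows the sequence is Cauchy in $C([0,\bar T];H)\cap L^2(0,\bar T;V)$ on a small interval and patches. So your claim that ``one cannot directly invoke \autoref{Thm deterministic well--posed} as a black box'' is misleading: the paper does exactly that, but at the level of iterates rather than for the full $(v,w)$-coupled equation. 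Your approach buys self-containment and avoids the local-in-time contraction step; the paper's approach inherits the energy \emph{equality} for free from \autoref{Thm deterministic well--posed} at each iterate and then passes to the limit, whereas you invoke Lions--Magenes directly. Two small inaccuracies in your sketch: (i) applying \eqref{ineq 3 lemma tecnico} to $b(w,v^n,w)$ as written produces an $\epsilon'\lVert w\rVert_V^2$ term, which is unavailable since $w$ is only in $L^4(0,T;\mathbb{L}^4)$; the correct bound is the direct H\"older--Young estimate $|b(w,v^n,w)|\leq \lVert w\rVert_{\mathbb{L}^4}^2\lVert\nabla v^n\rVert_{L^2}\leq \epsilon\lVert v^n\rVert_V^2+C_\epsilon\lVert w\rVert_{\mathbb{L}^4}^4$; (ii) once $v\in L^4(0,T;\mathbb{L}^4)$ is known, $L(v,w)=B(v,w)+B(w,v)+B(w,w)$ \emph{is} in $L^2(0,T;V^*)$ by \eqref{ineq 1 lemma tecnico}, so there is no regularity obstruction to the Lions--Magenes argument.
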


\begin{proof}
We split the proof into several steps.

\emph{Step 1: Uniqueness}. Let $v^{\left(  i\right)  }$ be two solutions.
The function $z=v^{\left(  1\right)  }-v^{\left(  2\right)  }$ satisfies
\begin{align*}
&  \left\langle z\left(  t\right)  ,\phi\right\rangle -\int_{0}^{t}\left(
b\left(  v^{\left(  1\right)  }+w,\phi,v^{\left(  1\right)  }+w\right)
-b\left(  v^{\left(  2\right)  }+w,\phi,v^{\left(  2\right)  }+w\right)
\right)  \,\dd s\\
&  =-\int_{0}^{t}\left\langle z\left(  s\right)  ,A\phi\right\rangle \,\dd s
\end{align*}
for every $\phi\in \Do\left(  A\right)  $. A simple manipulation gives us%
\begin{align*}
&  b\left(  v^{\left(  1\right)  }+w,\phi,v^{\left(  1\right)  }+w\right)
-b\left(  v^{\left(  2\right)  }+w,\phi,v^{\left(  2\right)  }+w\right)
-b\left(  z,\phi,z\right) \\
&  =b\left(  v^{\left(  2\right)  }+w,\phi,z\right)  +b\left(  z,\phi
,v^{\left(  2\right)  }+w\right)
\end{align*}
hence%
\begin{align*}
&  \left\langle z\left(  t\right)  ,\phi\right\rangle -\int_{0}^{t}b\left(
z\left(  s\right)  ,\phi,z\left(  s\right)  \right)  \,\dd s=-\int_{0}^{t}\left\langle z\left(  s\right)  ,A\phi\right\rangle
\,\dd s+\int_{0}^{t}\left\langle \widetilde{f}\left(  s\right)  ,\phi\right\rangle
\,\dd s
\end{align*}
where
\[
\widetilde{f}=-B\left(  v^{\left(  2\right)  }+w,z\right)  -B\left(
z,v^{\left(  2\right)  }+w\right)\hspace{-0.1cm}.
\]
By \autoref{lemma tecnico}, $\widetilde{f}\in L^{2}\left(
0,T;V^*\right)  $. Then, by \autoref{Thm deterministic well--posed}%
,
\[
\lVert z\left(  t\right)  \rVert^{2}+2\int_{0}%
^{t}\lVert \nabla z\left(  s\right)  \rVert _{L^{2}}^{2}\,\dd s=2\int%
_{0}^{t}b\left(
z,z,v^{\left(  2\right)  }+w\right)\left(s\right)    \,\dd s.
\]
Again by \autoref{lemma tecnico}, we have%
\begin{align*}
\left\lvert b\left(z,z,  v^{\left(  2\right)  }+w\right)  \right\rvert  &
\leq\left\lvert b\left( z,z, v^{\left(  2\right)  }\right)  \right\rvert
+\left\lvert b\left(z,z,w\right)  \right\rvert \\
&  \leq\epsilon\lVert z\rVert _{V}^{2}+\epsilon\lVert
z\rVert _{V}^{2}+\frac{C}{\epsilon^{3}}\lVert z\rVert
^{2}\lVert v^{\left(  2\right)  }\rVert _{\mathbb{L}^4}^{4}\\
&  +\epsilon\lVert z\rVert _{V}^{2}+\epsilon\lVert z\rVert
_{V}^{2}+\frac{C}{\epsilon^{3}}\lVert z\rVert^{2}\lVert
w\rVert _{\mathbb{L}^4}^{4}%
\end{align*}%
\[
=4\epsilon\lVert z\rVert _{V}^{2}+\frac{C}{\epsilon^{3}%
}\lVert z\rVert^{2}\left(  \lVert v^{\left(  2\right)
}\rVert _{\mathbb{L}^4}^{4}+\lVert w\rVert _{\mathbb{L}^4}^{4}\right)\hspace{-0.1cm}.
\]
Summarizing, with $4\epsilon=1$, using the fact that $\lVert
z\rVert _{V}^{2}=\lVert \nabla z\rVert _{L^{2}}^{2}$, renaming the constant $C$,
\[
\lVert z\left(  t\right)  \rVert^{2}+\int_{0}%
^{t}\lVert \nabla z\left(  s\right)  \rVert _{L^{2}}^{2}\,\dd s{\leq} C\int%
_{0}^{t}\lVert z\left(  s\right)  \rVert^{2}\left(  1+\lVert
v^{\left(  2\right)  }\left(  s\right)  \rVert _{\mathbb{L}^4}^{4}+\lVert
w\left(  s\right)  \rVert _{\mathbb{L}^4}^{4}\right)  \,\dd s.
\]
We conclude $z=0$ by the Gronwall lemma, using the assumption on $w$ and
the integrability properties of $v^{\left(  2\right)  }$.

\emph{Step 2: Existence}. 
Define the sequence $\left(  v^{n}\right)  $ by
setting $v^{0}=0$ and for every $n\geq0$, given $v^{n}\in C\left(  \left[
0,T\right]  ;H\right)  \cap L^{2}\left(  0,T;V\right)  $, let $v^{n+1}$ be the
solution of equation \eqref{classical NS} with initial condition $u_0$ and
with
\[
-B\left(  v^{n},w\right)  -B\left(  w,v^{n}\right)  -B\left(  w,w\right)
\]
in place of $f$. In particular%
\begin{align*}
&  \left\langle v^{n+1}\left(  t\right)  ,\phi\right\rangle -\int_{0}%
^{t}b\left(  v^{n+1}\left(  s\right)  ,\phi,v^{n+1}\left(  s\right)  \right)
\,\dd s\\
&  =\left\langle u_0,\phi\right\rangle -\int_{0}^{t}\left\langle
v^{n+1}\left(  s\right)  ,A\phi\right\rangle \,\dd s\\
&  -\int_{0}^{t}\left\langle \left(  B\left(  v^{n},w\right)  +B\left(
w,v^{n}\right)  +B\left(  w,w\right)  \right)  \left(  s\right)
,\phi\right\rangle \,\dd s
\end{align*}
for every $\phi\in \Do\left(  A\right)  $. In order to claim that this
definition is well done, we notice that
\[
B\left(  v^{n},w\right)  ,B\left(  w,v^{n}\right)  ,B\left(  w,w\right)  \in
L^{2}\left(  0,T;V^*\right)
\]
by \autoref{lemma tecnico}.

Then let us investigate the convergence of $\left(  v^{n}\right)  $. First,
let us prove a bound. From the previous identity and
\autoref{Thm deterministic well--posed} we get
\begin{align*}
&  \lVert v^{n+1}\left(  t\right)  \rVert^{2}+2\int%
_{0}^{t}\lVert \nabla v^{n+1}\left(  s\right)  \rVert _{L^{2}}%
^{2}\,\dd s\\
&  =\lVert u_0\rVert^{2}  +2\int_{0}^{t}\left(  b\left(  v^{n},v^{n+1},w\right)  +b\left(
w,v^{n+1},v^{n}\right)  +b\left(  w,v^{n+1},w\right)  \right)  \left(
s\right)  \,\dd s.
\end{align*}
It gives us (using \autoref{lemma tecnico})
\begin{align*}
&  \lVert v^{n+1}\left(  t\right)  \rVert^{2}+\int%
_{0}^{t}\lVert \nabla v^{n+1}\left(  s\right)  \rVert _{L^{2}}%
^{2}\,\dd s\\
&  =\lVert u_0\rVert^{2}+\epsilon\int_{0}^{t}\lVert
v^{n}\left(  s\right)  \rVert _{V}^{2}\,\dd s\\
&  +C_{\epsilon}\int_{0}^{t}\lVert v^{n}\left(  s\right)  \rVert^{2}\left(  1+\lVert w\left(  s\right)  \rVert _{\mathbb{L}^4}%
^{4}\right)  \,\dd s+C_{\epsilon}\int_{0}^{t}\lVert w\left(  s\right)
\rVert _{\mathbb{L}^4}^{4}\,\dd s.
\end{align*}
Choosing a small constant $\epsilon$, one can find
$R>\lVert u_0\rVert^{2}$ and $\overline{T}$ small enough, depending only from $\lVert u_0\rVert$ and $\lVert w\rVert_{L^4(0,T;\mathbb{L}^4)}$, such that
if
\begin{equation}
\sup_{t\in\left[  0,\overline{T}\right]  }\lVert v^{n}\left(  t\right)  \rVert
^{2}\leq R,\qquad\int_{0}^{\overline{T}}\lVert v^{n}\left(  s\right)  \rVert
_{V}^{2}\,\dd s\leq R \label{fixed point R}%
\end{equation}
then the same inequalities hold for $v^{n+1}$.

Set $w_{n}=v^{n}-v^{n-1}$, for $n\geq1$. From the identity above,%
\begin{align*}
&  \left\langle w_{n+1}\left(  t\right)  ,\phi\right\rangle -\int_{0}%
^{t}\left(  b\left(  v^{n+1},\phi,v^{n+1}\right)  -b\left(  v^{n},\phi
,v^{n}\right)  \right)  \left(  s\right)  \,\dd s\\
&  =-\int_{0}^{t}\left\langle w_{n+1}\left(  s\right)  ,A\phi\right\rangle
\,\dd s-\int_{0}^{t}\left\langle \left(  B\left(  v^{n},w\right)  -B\left(
v^{n-1},w\right)  \right)  \left(  s\right)  ,\phi\right\rangle \,\dd s\\
&  -\int_{0}^{t}\left\langle \left(  B\left(  w,v^{n}\right)  -B\left(
w,v^{n-1}\right)  \right)  \left(  s\right)  ,\phi\right\rangle \,\dd s.
\end{align*}
Again as above, since%
\begin{align*}
&  b\left(  v^{n+1},\phi,v^{n+1}\right)  -b\left(  v^{n},\phi,v^{n}\right)
-b\left(  w_{n+1},\phi,w_{n+1}\right) \\
&  =b\left(  v^{n},\phi,w_{n+1}\right)  +b\left(  w_{n+1},\phi,v^{n}\right)
\end{align*}
we may rewrite it as%
\begin{align*}
&  \left\langle w_{n+1}\left(  t\right)  ,\phi\right\rangle -\int_{0}%
^{t}b\left(  w_{n+1}\left(  s\right)  ,\phi,w_{n+1}\left(  s\right)  \right)
\,\dd s\\
&  =-\int_{0}^{t}\left\langle w_{n+1}\left(  s\right)  ,A\phi\right\rangle
\,\dd s-\int_{0}^{t}\left\langle \left(  B\left(  w_{n},w\right)  +B\left(
w,w_{n}\right)  \right)  \left(  s\right)  ,\phi\right\rangle \,\dd s\\
&  +\int_{0}^{t}\left(  b\left(  v^{n},\phi,w_{n+1}\right)  +b\left(
w_{n+1},\phi,v^{n}\right)  \right)  \left(  s\right)  \,\dd s.
\end{align*}
One can check as above the applicability of
\autoref{Thm deterministic well--posed} and get
\begin{align*}
&  \lVert w_{n+1}\left(  t\right)  \rVert ^{2}+2\int%
_{0}^{t}\lVert \nabla w_{n+1}\left(  s\right)  \rVert _{L^{2}}%
^{2}\,\dd s\\
&  =2\int_{0}^{t}\left(  b\left(  w_{n},w_{n+1},w\right)  +b\left(
w,w_{n+1},w_{n}\right)  \right)  \left(  s\right)  \,\dd s\\
&  +2\int_{0}^{t}b\left(
w_{n+1},w_{n+1},v^{n}\right)\left(  s\right)  \,\dd s.
\end{align*}
As above we deduce%
\[
\lvert b\left(  w_{n+1}%
,w_{n+1},v^{n}\right)  \rvert \leq\frac{1}{2}\lVert w_{n+1}%
\rVert _{V}^{2}+C\lVert w_{n+1}\rVert^{2}\lVert
v^{n}\rVert _{\mathbb{L}^4}^{4}.
\]
But%
\[
\lvert b\left(  w_{n},w_{n+1},w\right)  +b\left(  w,w_{n+1},w_{n}\right)
\rvert \leq\frac{1}{2}\lVert w_{n+1}\rVert _{V}^{2}+\frac{1
}{4}\lVert w_{n}\rVert _{V}^{2}+C\lVert w_{n}\rVert%
^{2}\lVert w\rVert _{\mathbb{L}^4}^{4}.
\]
Hence%
\begin{align*}
&  \lVert w_{n+1}\left(  t\right)  \rVert ^{2}+\int%
_{0}^{t}\lVert \nabla w_{n+1}\left(  s\right)  \rVert _{L^{2}}%
^{2}\,\dd s\\
&  \leq C\int_{0}^{t}\lVert w_{n+1}\left(  s\right)  \rVert%
^{2}\left(  1+\lVert v^{n}\left(  s\right)  \rVert _{\mathbb{L}^4}%
^{4}\right)  \,\dd s\\
&  +\frac{1}{4}\int_{0}^{t}\lVert w_{n}\left(  s\right)  \rVert _{V}%
^{2}\,\dd s+C\int_{0}^{t}\lVert w_{n}\left(  s\right)  \rVert%
^{2}\lVert w\left(  s\right)  \rVert _{\mathbb{L}^4}^{4}\,\dd s.
\end{align*}
Now we work under the bounds \eqref{fixed point R} and deduce, using the Gronwall
lemma, for $\overline{T}$, depending only from $\lVert u_0\rVert$ and $\lVert w\rVert_{L^4(0,T;\mathbb{L}^4)}$, possibly smaller than the previous one,%
\begin{align*}
&  \sup_{t\in\left[  0,\overline{T}\right]  }\lVert w_{n+1}\left(  t\right)
\rVert^{2}+\int_{0}^{\overline{T}}\lVert w_{n+1}\left(  s\right)
\rVert _{V}^{2}\,\dd s\\
&  \leq\frac{1}{2}\left(  \sup_{t\in\left[  0,\overline{T}\right]  }\lVert
w_{n}\left(  t\right)  \rVert^{2}+\int_{0}^{\overline{T}}\lVert
w_{n}\left(  s\right)  \rVert _{V}^{2}\,\dd s\right) \hspace{-0.1cm}.
\end{align*}
It implies that the sequence $\left(  v^{n}\right)  $ is Cauchy in $C\left(
\left[  0,\overline{T}\right]  ;H\right)  \cap L^{2}\left(  0,\overline{T};V\right)  $. The limit
$v$ has the right regularity to be a weak solution and satisfies the weak
formulation; in the identity above for $v^{n+1}$ and $v^{n}$ we may prove
that%
\begin{align*}
\int_{0}^{t}b\left(  v^{n+1}\left(  s\right)  ,\phi,v^{n+1}\left(  s\right)
\right)  \,\dd s  &  \rightarrow\int_{0}^{t}b\left(  v\left(  s\right)
,\phi,v\left(  s\right)  \right)  \,\dd s\\
\int_{0}^{t}b\left(  v^{n}\left(  s\right)  ,\phi,w\left(  s\right)  \right)
\,\dd s  &  \rightarrow\int_{0}^{t}b\left(  v\left(  s\right)  ,\phi,w\left(
s\right)  \right)  \,\dd s\\
\int_{0}^{t}b\left(  w\left(  s\right)  ,\phi,v^{n}\left(  s\right)  \right)
\,\dd s  &  \rightarrow\int_{0}^{t}b\left(  w\left(  s\right)  ,\phi,v\left(
s\right)  \right)  \,\dd s.
\end{align*}
All these convergences can be proved easily by recalling the definition of $b$.
Similarly, we can pass to the limit in the energy identity. After proving existence and uniqueness in $[0,\overline{T}]$ we can reiterate the existence procedure and in a finite number of steps cover the interval $[0,T]$.

\emph{Step 3: Continuous dependence on the data}. Let $v^n$  (resp. $v$) the unique solution of \eqref{modified NS} with data $u_0^n,\ w^n$ (resp. $u_0,\ w$). Since $u_0^n\rightarrow u_0$ in $H$  (resp. $w^n\rightarrow w$ in $L^4(0,T;\mathbb{L}^4)$) the family $(u_0^n)_{n\in\mathbb{N}}$ is bounded in $H$ (resp. the family $(w^n)_{n\in\mathbb{N}}$ is bounded in $L^4(0,T;\mathbb{L}^4)$), by \eqref{energy equality modified} one can show easily that the family $(v^n)_{n\in\mathbb{N}}$ is bounded in $C([0,T];H)\cap L^2(0,T;V)\hookrightarrow L^4(0,T;\mathbb{L}^4).$ Moreover for each $t\in [0,T]$, $w^n:=v^n-v$ satisfies the energy relation
\begin{align}\label{continuity step 1}
    \frac{1}{2}\lVert w^n(t)\rVert^2+\int_0^t \lVert \nabla w^n(s)\rVert_{L^2}^2 \,\dd s&= \frac{1}{2}\lVert u_0^n-u_0\rVert^2 \notag\\ &+\int_0^t b(v^n(s)+z^n(s),w^n(s),z^n(s)-z(s))\,\dd s \notag\\ &+\int_0^t b(w^n(s),w^n(s),v(s)+z(s))\,\dd s\notag\\ &+\int_0^t b(z^n(s)-z(s),w^n(s),v(s)+z(s))\,\dd s.
\end{align}
We can easily bound the right hand side of relation \eqref{continuity step 1} by Young's inequality and H\"older's inequality obtaining 
\begin{align}\label{continuity step 2}
    \frac{1}{2}\lVert w^n(t)\rVert^2&+\frac{1}{2}\int_0^t \lVert \nabla w^n(s)\rVert_{L^2}^2 \,\dd s\leq \frac{1}{2}\lVert u_0^n-u_0\rVert^2\notag\\ &+C\int_0^t \lVert w^n(s)\rVert^2 \left(\lVert v(s)\rVert_{\mathbb{L}^4}^4+\lVert z(s)\rVert_{\mathbb{L}^4}^4\right)\,\dd s\notag\\ & +C\lVert z^n-z\rVert_{L^4(0,T;\mathbb{L}^4)}^2\left(\lVert z^n\rVert_{L^4(0,T;\mathbb{L}^4)}^2+\lVert z\rVert_{L^4(0,T;\mathbb{L}^4)}^2\right)\notag\\ &+C\lVert z^n-z\rVert_{L^4(0,T;\mathbb{L}^4)}^2\left(\lVert v^n\rVert_{L^4(0,T;\mathbb{L}^4)}^2+\lVert v\rVert_{L^4(0,T;\mathbb{L}^4)}^2\right).
\end{align}
Applying Gronwall's inequality to relation \eqref{continuity step 2} the claim follows immediately.
\end{proof}

\begin{remark}\label{remark measurability}
Freezing the variable $\omega\in \Omega$ and solving \eqref{modified NS} for each $\omega$ does not allow to obtain information about the measurability properties of $v$. However, measurability of $v$ with respect of the progressive $\sigma$-algebra follows from the continuity of the solution map with respect to $u_0$ and $w$. Therefore we have the required measurability properties for $v$ with $w$ being the mild solution of \eqref{Linear stochastic}. In particular $v$ has $\mathbf{P}$-a.s. paths in $C(0,T;H)\cap L^2(0,T;V)$, it is progressively measurable with respect to these topologies and \begin{align}\label{auxiliary weak formulation nonlinear}
&  \left\langle v\left(  t\right)  ,\phi\right\rangle -\int_{0}^{t}b\left(
v\left(  s\right)  +w\left(  s\right)  ,\phi,v\left(  s\right)  +w\left(
s\right)  \right)  \,\dd s \notag\\
&  =\left\langle u_0,\phi\right\rangle -\int_{0}^{t}\left\langle v\left(
s\right)  ,A\phi\right\rangle \,\dd s \quad \mathbf{P}-a.s.
\end{align}
for every $\phi\in \Do\left(  A\right)  $ and $t\in [0,T]$.
\end{remark}

\begin{proof}[Proof of \autoref{main thm}] It follows immediately combining \autoref{regularity stokes}, \autoref{Thm deterministic well--posed modified} and \autoref{remark measurability}.
\end{proof}

\section{Interior Regularity}\label{sec:interior reg}
\subsection{Stokes Equations}\label{sec interior regularity mild stokes}

 As in the proof of \autoref{main thm}, by a stopping time argument we may assume that $h_b$ is also $L^p(\Omega)$-integrable, cf.\ the beginning of the proof of \autoref{regularity stokes}.
This fact will be used below without further mentioning it. 
We start showing a lemma, concerning the relation between the mild and the weak formulation of \eqref{Linear stochastic} defined below.
\begin{definition}\label{weak solution linear}
Let \autoref{assumptions boundary noise} be satisfied.
A stochastic process $w$ 
is a weak solution of \eqref{Linear stochastic} if it is $\mathcal{F}$-progressively measurable with $\mathbf{P}-a.s.$ paths in
\begin{align*}
    w \in C([0,T];H)\cap L^4(0,T;\mathbb{L}^4)
\end{align*}
and for each $\phi\in \Do(A)$
\begin{align}\label{weak formulation linear}
\langle w(t),\phi\rangle=& -\int_{0}^{t}\langle w(s),A\phi\rangle
\ \,\dd s+\int_0^t \langle h_b(s),\phi\rangle_{W^{-\alpha,q}(\Gamma_u),W^{\alpha,q'}(\Gamma_u)} \,\dd W_{\mathcal{H}}(s)
\end{align}
for each $t \in[0,T],\ \mathbf{P}-a.s.$
\end{definition}

Note that the last term in \eqref{weak formulation linear} is well-defined as $\alpha<1/2$ and $q'< 2$.

\begin{remark}\label{remark well-posed stoch integral}
In the definition above, the term  
$\langle h_b(s),\phi\rangle_{W^{-\alpha,q}(\Gamma_u),W^{\alpha,q'}(\Gamma_u)}$ is given by the following linear operator on $\mathcal{H}$:
\begin{equation*}
\mathcal{H}\ni h'\mapsto \langle h_b(s)h',\phi\rangle_{W^{-\alpha,q}(\Gamma_u),W^{\alpha,q'}(\Gamma_u)}=L_{\phi} (h_b(s)h')
\end{equation*}
where $L_{\phi}:=\langle \cdot,\phi\rangle_{W^{-\alpha,q}(\Gamma_u),W^{\alpha,q'}(\Gamma_u)}$. By the ideal property of $\gamma$-spaces and 
$\gamma(\mathcal{H}, W^{-\alpha,q}(\Gamma_u))=W^{-\alpha,q}(\Gamma_u;\mathcal{H})$ we have
\begin{equation*}
\|\langle h_b(s),\phi\rangle_{W^{-\alpha,q}(\Gamma_u),W^{\alpha,q'}(\Gamma_u)}\|_{\mathcal{H}^*}\lesssim \|h_b(s)\|_{W^{-\alpha,q}(\Gamma_u;\mathcal{H})}\|\phi\|_{W^{\alpha,q'}(\Gamma_u)},
\end{equation*}
a.e.\ on $(0,T)\times \Omega$.
Whence, the stochastic integral in \eqref{weak formulation linear} is well-defined with scalar value as
\begin{align*}
    &\mathbf{E}\left[\Big|  \int_0^T \langle h_b(s),\phi\rangle_{W^{-\alpha,q}(\Gamma_u),W^{\alpha,q'}(\Gamma_u)} \,\dd W_{\mathcal{H}}(s)\Big|^2\right]\\
    &\quad= \mathbf{E} \left[ \int_0^T \|\langle h_b(s),\phi\rangle_{W^{-\alpha,q}(\Gamma_u),W^{\alpha,q'}(\Gamma_u)}\|_{\mathcal{H}^*}^2 \,\dd s\right]\\
    &\quad  \lesssim\mathbf{E} \left[ \int_0^T \|h_b(s)\|_{W^{-\alpha,q}(\Gamma_u;\mathcal{H})}^2\|\phi\|_{W^{-\alpha,q'}(\Gamma_u)}^2 \,\dd s\right]<\infty,
\end{align*}    
where the last estimate follows from \autoref{assumptions boundary noise}.
\end{remark}

\begin{lemma}\label{lemma weak solution linear equivalence}
    Let \autoref{assumptions boundary noise} be satisfied.
   There exists a unique weak solution of \eqref{Linear stochastic} in the sense of \autoref{weak solution linear} and it is given by the formula \eqref{Mild equation linear}.
\end{lemma}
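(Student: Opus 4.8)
\textbf{Proof plan for Lemma \ref{lemma weak solution linear equivalence}.}

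The plan is to show the two implications separately: (i) the mild solution given by \eqref{Mild equation linear} is a weak solution in the sense of \autoref{weak solution linear}, and (ii) weak solutions are unique, which by (i) forces every weak solution to equal \eqref{Mild equation linear}. For (ii), uniqueness, the standard trick is linearity: if $w^{(1)},w^{(2)}$ are two weak solutions, then $z=w^{(1)}-w^{(2)}$ has $\mathbf{P}$-a.s.\ paths in $C([0,T];H)\cap L^4(0,T;\mathbb{L}^4)$, is progressively measurable, and satisfies $\langle z(t),\phi\rangle=-\int_0^t\langle z(s),A\phi\rangle\,\dd s$ for all $\phi\in\Do(A)$ and $t\in[0,T]$, $\mathbf{P}$-a.s.\ (the noise terms cancel since $h_b$ is the same). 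This is, $\omega$ by $\omega$, the weak formulation of the deterministic homogeneous Stokes equation with zero initial datum; by the standard energy equality for the analytic semigroup generated by $-A$ (e.g.\ testing against $z$ itself after the usual Galerkin/regularization argument, or simply invoking \autoref{Thm deterministic well--posed} with $\overline f=0$ and $\overline u_0=0$, since $z\in L^2(0,T;V)$ follows from the argument there or can be added to the class of test functions by density), one gets $\lVert z(t)\rVert^2=0$ for all $t$, hence $z\equiv 0$.

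For (i), the main work is to verify that $w$ defined by \eqref{Mild equation linear} satisfies \eqref{weak formulation linear}. First I would record, using \autoref{regularity stokes}, that $w$ has $\mathbf{P}$-a.s.\ paths in $C([0,T];H)\cap L^4(0,T;\mathbb{L}^4)$ and the required progressive measurability, so the left-hand side and the first integral on the right-hand side of \eqref{weak formulation linear} make sense, and by \autoref{remark well-posed stoch integral} the stochastic integral is well-defined with scalar values. The identity itself should be derived by a duality/Fubini argument: fix $\phi\in\Do(A)$ and test $w(t)=A_q\int_0^t S_q(t-s)\mathcal{N}[h_b(s)]\,\dd W_{\mathcal{H}}(s)$ against $\phi$. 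Using the stochastic Fubini theorem (justified by the integrability from \autoref{Corollary regularity Neumann map} and \autoref{max regularity}, since $A_q S_q(t-\cdot)\mathcal{N}[h_b(\cdot)]$ sits in the right $\gamma$-space), $\langle w(t),\phi\rangle=\int_0^t \langle A_q S_q(t-s)\mathcal{N}[h_b(s)],\phi\rangle\,\dd W_{\mathcal{H}}(s)$. Then I would move $A_q S_q(t-s)$ onto $\phi$ by self-adjointness/duality of the Stokes operators ($\langle A_q S_q(t-s)g,\phi\rangle=\langle g, S_{q'}(t-s)A_{q'}\phi\rangle$ for $\phi\in\Do(A)$, using $(A_q)^*=A_{q'}$ from the excerpt), and identify $\langle \mathcal{N}[h_b(s)], S_{q'}(t-s)A_{q'}\phi\rangle$. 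By the weak characterization \eqref{Non Homogeneous Stokes weak} of the Neumann map applied with test function $\psi=S_{q'}(t-s)A_{q'}\phi$ (which lies in $\Do(A_{q'})$ and hence has the right boundary behaviour and is divergence free), this pairing equals $-\langle (S_{q'}(t-s)A_{q'}\phi)_1(\cdot,a), h_b(s)\rangle$. Finally, integrating in $t$ the semigroup identity $A\int_0^t S(t-s)\,\dd s$ applied to $\phi$, i.e.\ using $S(t-s)A_{q'}\phi=-\frac{d}{ds}S(t-s)\phi$ and $\int_0^t$, one collapses the double structure and recovers exactly the two terms in \eqref{weak formulation linear}: the $-\int_0^t\langle w(s),A\phi\rangle\,\dd s$ term comes from differentiating/integrating the semigroup in the deterministic part of the mild formula, and the boundary pairing against $h_b$ produces the stochastic integral $\int_0^t\langle h_b(s),\phi\rangle\,\dd W_{\mathcal{H}}(s)$. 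Care is needed that $(S_{q'}(r)A_{q'}\phi)_1|_{\Gamma_u}\to \phi_1|_{\Gamma_u}$ appropriately as $r\to 0$ so the trace pairings are the ones claimed; this is where I expect to spend the most effort.

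The main obstacle is the bookkeeping around the unbounded operator $A_q$ in front of the stochastic convolution: one must commute $A_q$ past the stochastic integral and onto the test function without losing integrability, and one must correctly pass from the ``$\mathcal{N}[h_b]$ paired with a smooth Stokes-regular test field'' formulation to the ``$h_b$ paired with the boundary trace'' formulation via \eqref{Non Homogeneous Stokes weak}. All of this is legitimate because of the smoothing $A_qS_q(t-s)\mathcal{N}$ provided by \autoref{Corollary regularity Neumann map} (the Neumann map lands in $\Do(A_q^{(1-\alpha)/2+1/(2q)-\varepsilon})$, so $A_qS_q(t-s)$ applied to it is still in a fractional domain with positive exponent for $t-s>0$ and is time-integrable near $s=t$ precisely under \autoref{assumptions boundary noise}), but the argument should be organized to make each commutation manifestly valid, e.g.\ by first proving the identity for $h_b\in C^1$-in-time and smooth in space and then passing to the limit using the continuity/stability estimates already available, or by a direct application of the stochastic Fubini theorem with the pairing against $A_{q'}\phi$ kept inside from the start.
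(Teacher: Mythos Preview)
Your existence direction (mild $\Rightarrow$ weak) follows essentially the paper's approach: pair the mild formula against a smooth test function, use $(A_q)^*=A_{q'}$ and the semigroup duality to move $A_qS_q(t-s)$ onto the test side, convert $\langle\mathcal N[h_b],A_{q'}\psi\rangle$ into the boundary pairing $\langle h_b,\psi\rangle_{\Gamma_u}$ via the weak formulation of the Neumann problem, and finish with stochastic Fubini. Two small organizational points worth noting: the paper takes $\phi\in\Do(A^2)$ (not merely $\Do(A)$) so that $A\phi$ is itself an admissible test field, and rather than ``collapsing the double structure'' directly it computes $\int_0^t\langle w(s),A\phi\rangle\,\dd s$ by inserting the mild formula for $w(s)$, swapping the order of integration, and using $\int_r^t S(s-r)A^2\phi\,\dd s=A\phi-S(t-r)A\phi$. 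This avoids the delicate trace limit $(S_{q'}(r)A_{q'}\phi)_1|_{\Gamma_u}\to\phi_1|_{\Gamma_u}$ you flagged.

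Your uniqueness argument, however, has a gap. The class in \autoref{weak solution linear} is only $C([0,T];H)\cap L^4(0,T;\mathbb L^4)$; no $L^2(0,T;V)$ regularity is assumed, so you cannot test the difference $z$ against itself, and \autoref{Thm deterministic well--posed} does not apply since that result presupposes the solution lies in $L^2(0,T;V)$. The paper bypasses energy methods entirely: it extends the weak formulation to time-dependent test functions $\psi\in C^1([0,T];\Do(A))$ and then chooses $\psi(s)=S_{q'}(t-s)\phi$ with $\phi\in\Do(A^2)$, so that the $\partial_s\psi$ and $A\psi$ terms cancel and one is left directly with the mild representation \eqref{Mild equation linear} for \emph{any} weak solution. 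This backward-semigroup trick needs only $z\in C([0,T];H)$ and is the right tool here; your parenthetical remark about time-dependent test functions is in fact the correct route, not a mere alternative.
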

\begin{proof}
We split the proof into two steps.

\emph{Step 1: There exists a unique weak solution of \eqref{Linear stochastic}  and it is necessarily given by the mild formula \eqref{Mild equation linear}}. Let $\psi \in C^1([0,T];\Do(A))$. Arguing as in the first step of the proof of \cite[Theorem 1.7]{flandoli2023stochastic}, see also \cite[Proposition 17]{flandoli2022heat}, one can readily check that $w$ satisfies 
\begin{align}\label{weak formulation linear time depending test}
\langle w(t),\psi(t)\rangle=& \int_0^t\langle w(s),\partial_s\psi(s)\rangle \,\dd s-\int_{0}^{t}\langle w(s),A\psi(s)\rangle \,\dd s\notag \\ & +\int_0^t \langle h_b(s),\psi(s)\rangle_{W^{-\alpha,q}(\Gamma_u),W^{\alpha,q'}(\Gamma_u)} \,\dd W_{\mathcal{H}}(s)
\end{align}
for each $t \in[0,T],\ \mathbf{P}-a.s.$
The stochastic integral in the relation above is well-defined arguing as in \autoref{remark well-posed stoch integral}.
Now consider $\phi\in \Do(A^2)$ and use $\psi_t(s)=S_{q'}(t-s)\phi,\ s\in [0,t]$ as test function in \eqref{weak formulation linear time depending test}  obtaining, since $S_{q'}(t)|_{H}=S(t)$, 
\begin{align}\label{weak formulation linear time depending test 2}
\langle w(t),\phi\rangle= & \int_0^t \langle h_b(s),S_{q'}(t-s)\phi\rangle_{W^{-\alpha,q}(\Gamma_u),W^{\alpha,q'}(\Gamma_u)} \,\dd W_{\mathcal{H}}(s). 
\end{align}
Recalling the definition of the Neumann map $\mathcal{N}$, \eqref{weak formulation linear time depending test 2} can be rewritten as 
\begin{align}\label{weak formulation linear time depending test 3}
\langle w(t),\phi\rangle= & \int_0^t \langle \mathcal{N}[h_b(s)],A_{q'}S_{q'}(t-s)\phi\rangle \,\dd W_{\mathcal{H}}(s).
\end{align}

Then, exploiting the self-adjointness property of $S_q$ and $A_q$ we have that weak solutions of \eqref{Linear stochastic} satisfy the mild formulation. Therefore they are unique.

\emph{Step 2: The mild formula \eqref{Mild equation linear} is a weak solution of \eqref{Linear stochastic} in the sense of \autoref{weak solution linear}}. We begin by noticing that $w$ has the required regularity due to \autoref{regularity stokes}. Let us test our mild formulation \eqref{Mild equation linear} against functions $\phi\in \Do(A^2)\subseteq \Do(A_{q'}^2)$. It holds, since $S_{q'}(t)|_{H}=S(t),\ A_{q'}|_{\Do(A)}=A$ and exploiting self-adjointness property of $S_q$ and $A_q$  \begin{align*}
\langle w(t),\phi\rangle &= \int_0^t \langle \mathcal{N}[h_b(s)],AS(t-s)\phi\rangle \,\dd W_{\mathcal{H}}(s)\\ & =\int_0^t \langle h_b(s),S(t-s)\phi\rangle_{W^{-\alpha,q}(\Gamma_u),W^{\alpha,q'}(\Gamma_u)} \,\dd W_{\mathcal{H}}(s)\quad \mathbf{P}-a.s.,
\end{align*}
where in the last step we used the definition of Neumann map. In order to complete the proof of this step it is enough to show that 
\begin{align}\label{relation to be verified}
    \int_0^t \langle h_b(s),S(t-s)\phi\rangle_{W^{-\alpha,q}(\Gamma_u),W^{\alpha,q'}(\Gamma_u)} \,\dd W_{\mathcal{H}}(s)=-\int_0^t \langle w(s),A\phi\rangle \,\dd s\notag&\\ 
    +\int_0^t \langle h_b(s),\phi\rangle_{W^{-\alpha,q}(\Gamma_u),W^{\alpha,q'}(\Gamma_u)} \,\dd W_{\mathcal{H}}(s) \quad \mathbf{P}-a.s. &
\end{align}
Relation \eqref{relation to be verified} is true. Indeed,
\begin{align}\label{weak formulation pre fubini}
    \int_0^t \langle w(s),A\phi\rangle \,\dd s&=\int_0^t \,\dd s \int_0^s \langle \mathcal{N}[h_b(r)],S(s-r)A^2 \phi \rangle \,\dd W_{\mathcal{H}}(r)\quad \mathbf{P}-a.s.
\end{align}
The double integrals in \eqref{weak formulation pre fubini} can be exchanged via stochastic Fubini's Theorem, see \cite{da2014stochastic}, since 
\begin{align*}
    &\int_0^t \,\dd s\left(\mathbf{E}\left[\int_0^s \,\dd r \| \langle  \mathcal{N}[h_b(r)],S(s-r)A^2 \phi\rangle\|_{\mathcal{H}}^2\right]\right)^{1/2}\\ & \leq C(T,q)\lVert A^2 \phi\rVert_{\mathbb{L}^2}\mathbf{E}\left[\lVert h_b\rVert_{L^2(0,T;W^{-\alpha,q}(\Gamma_u;\mathcal{H}))}^2\right]^{1/2} <+\infty\end{align*}
Therefore the double integral in the right hand side of \eqref{weak formulation pre fubini} can be rewritten as 
\begin{align*}
    &\int_0^t \,\dd s \int_0^s \langle \mathcal{N}[h_b(r)],S(s-r)A^2 \phi \rangle \,\dd W_{\mathcal{H}}(r)\\
    &\qquad =\int_0^t \,\dd W_{\mathcal{H}}(r) \int_r^t  \langle \mathcal{N}[h_b(r)],S(s-r)A^2 \phi \rangle \,\dd s\\ 
    &\qquad =\int_0^t \langle \mathcal{N}[h_b(r)],A \phi \rangle \,\dd W_{\mathcal{H}}(r)\\ 
    & \qquad  -\int_0^t \langle \mathcal{N}[h_b(r)],AS(t-r)\phi \rangle \,\dd W_{\mathcal{H}}(r)\\ 
    &\qquad  = \int_0^t \langle h_b(r), \phi \rangle_{W^{-\alpha,q}(\Gamma_u),W^{\alpha,q'}(\Gamma_u)} \,\dd W_{\mathcal{H}}(r)\\ 
    & \qquad  -\int_0^t \langle h_b(r),S(t-r)\phi \rangle_{W^{-\alpha,q}(\Gamma_u),W^{\alpha,q'}(\Gamma_u)} \,\dd W_{\mathcal{H}}(r)\quad \mathbf{P}-a.s.
\end{align*}
Inserting this expression in \eqref{weak formulation pre fubini}, \eqref{relation to be verified} holds and the proof is complete.
\end{proof}

Thanks to the weak formulation guaranteed by \autoref{lemma weak solution linear equivalence} we can easily obtain the interior regularity of the linear stochastic problem \eqref{Linear stochastic}. Let $N_0$ be the $\mathbf{P}$ null measure set where at least one between $w\notin C([0,T];H)\cap L^4(0,T;\mathbb{L}^4)$, $v\notin C([0,T];H)\cap L^2(0,T;V)$, \eqref{weak formulation linear} and \eqref{auxiliary weak formulation nonlinear} is not satisfied. In the following we will work pathwise in $\Omega\setminus N_0$ even if not specified.
\begin{corollary}\label{corollary inteorior regularity linear}
        Let \autoref{assumptions boundary noise} be satisfied.
        Let $w$ be the unique weak solution of  \eqref{Linear stochastic} in the sense of \autoref{weak solution linear}.  Then, for all $0<t_1\leq t_2<T,$ $x_0\in \Dom $, $r>0$ such that $\mathrm{dist}({B(x_0,r)}, \partial \Dom)>0$,
    \begin{align*}
        w\in C([t_1,t_2], C^{\infty}({B(x_0,r)};\mathbb{R}^2)) \quad \mathbf{P}-a.s.
    \end{align*}
\end{corollary}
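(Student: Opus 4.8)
The plan is to exploit the weak formulation \eqref{weak formulation linear} together with interior elliptic/parabolic regularity for the Stokes system, working pathwise on $\Omega\setminus N_0$. First I would fix a ball $B(x_0,r)$ with $\mathrm{dist}(B(x_0,r),\partial\Dom)>0$ and fix a slightly larger ball $B'=B(x_0,r')$ still compactly contained in $\Dom$. The key observation is that the boundary noise in \eqref{weak formulation linear} is supported on $\Gamma_u$, hence for test functions $\phi\in\Do(A)$ that are compactly supported in $\Dom$ the stochastic term drops out entirely: testing \eqref{weak formulation linear} against such $\phi$ (more precisely, against $\p_2\psi$ for $\psi\in\mathcal{D}(\Dom)$ supported in $B'$, to handle the divergence-free constraint) gives that $w$ satisfies the homogeneous Stokes equation $\partial_t w+\nabla\rho=\Delta w$ in the interior in the sense of distributions, with no stochastic forcing. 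This reduces the problem to a deterministic (pathwise) interior regularity statement for the Stokes system with an $L^4_t\mathbb{L}^4_x$ datum.

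Next I would run a standard Serrin-type bootstrap on the interior. Since $w\in C([0,T];H)\cap L^4(0,T;\mathbb{L}^4)$ by \autoref{regularity stokes}, the pressure $\rho$ can be recovered locally (via the De Rham theorem / local Helmholtz decomposition, as in \cite{G11_book}) and one obtains interior Caccioppoli-type estimates: on $B'\times(t_1,t_2)$, $w$ solves a linear Stokes system with no forcing, so local parabolic regularity theory for the Stokes operator (e.g.\ \cite[Chapter 1]{lemarie2018navier} or classical interior estimates) upgrades $w$ from $L^4_{\mathrm{loc}}$ to $H^{k}_{\mathrm{loc}}$ in space for every $k$ and to $C([t_1,t_2];\cdot)$ in time, on any compactly contained subdomain. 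Iterating on a nested sequence of balls $B(x_0,r)\Subset B(x_0,r_1)\Subset\cdots\Subset B(x_0,r')\Subset\Dom$ and combining with Sobolev embeddings yields $w\in C([t_1,t_2];C^\infty(B(x_0,r);\R^2))$. The time-continuity of the spatial $C^\infty$-norm follows from the already-established $w\in C([0,T];H)$ together with the uniform-in-time interior elliptic bounds for the instantaneous Stokes resolvent problem.

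The main obstacle I expect is the handling of the pressure: unlike the heat equation, the Stokes system is non-local through $\rho$, so one cannot localize the equation for $w$ by simply cutting off, since $\eta w$ is no longer divergence-free and a commutator term involving the pressure appears. The standard remedy is to work with the local pressure obtained from the interior De Rham theorem and to absorb the cutoff commutators into lower-order terms via interpolation, exactly as in the classical interior regularity theory for Navier--Stokes (cf.\ \cite{serrin1961interior,lemarie2018navier}); I would carry this out carefully at the first bootstrap step and then iterate. A secondary, but routine, point is to justify that test functions of the form $\p_2\psi$ with $\psi$ smooth and compactly supported in $\Dom$ are admissible in \eqref{weak formulation linear} and that they suffice to characterize $w$ in the interior — this is immediate once one notes such functions lie in $\Do(A)$ (indeed in $\Do(A^2)$ after mollification) and vanish near $\partial\Dom$.
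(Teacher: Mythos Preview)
Your overall strategy is sound, but there is a gap in the choice of test functions, and the paper's route sidesteps the pressure issue you flag in a much simpler way.

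\textbf{The gap.} You claim that $\p_2\psi$ with $\psi$ smooth and compactly supported in $\Dom$ ``vanishes near $\partial\Dom$'' and hence kills the stochastic boundary term. This is false in general: the Helmholtz projection is non-local, so $\p_2\psi=\psi-\nabla\psi_\psi$ where $\psi_\psi$ solves the Neumann problem \eqref{eq:psi_f_problem}, and $\nabla\psi_\psi$ has no reason to vanish near $\partial\Dom$. In particular $(\p_2\psi)_1|_{\Gamma_u}=-\partial_x\psi_\psi(\cdot,a)$ is generically nonzero, so the stochastic term in \eqref{weak formulation linear} survives, and $\p_2\psi$ need not even lie in $\Do(A)$ (the Dirichlet condition on $\Gamma_b$ fails for the tangential component). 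If instead you take $\psi\in\mathcal{D}(\Dom)$ in the paper's sense (already divergence-free and compactly supported), then $\p_2\psi=\psi$ and the issue disappears --- but in two dimensions every such $\psi$ is of the form $\nabla^{\perp}\chi$ with $\chi\in C^\infty_c(\Dom)$, and this leads you straight to the paper's argument.

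\textbf{The paper's route.} Rather than confronting the Stokes system and its pressure, the paper tests \eqref{weak formulation linear} against $\phi=\nabla^{\perp}\chi$, $\chi\in C^\infty_c(\Dom)$. These are compactly supported and divergence-free, hence lie in $\Do(A)$ and annihilate the boundary term. One obtains that $\omega_w:=\operatorname{curl} w$ is a distributional solution of the \emph{scalar heat equation} $\partial_t\omega_w=\Delta\omega_w$ in $(0,T)\times\Dom$ --- the pressure disappears because $\operatorname{curl}\nabla\rho=0$. Standard interior regularity for the heat equation then gives $\omega_w\in C([t_1-\epsilon,t_2+\epsilon];C^\infty(B(x_0,r+\epsilon)))$. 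Finally, since $\div w=0$, one has $\Delta w=\nabla^{\perp}\omega_w$, so $w$ is recovered on each ball by solving a purely \emph{elliptic} Dirichlet problem for $\phi w$ with right-hand side built from $\omega_w$ and lower-order commutators; an elementary bootstrap on nested balls yields the claim. This avoids entirely the local De~Rham/pressure machinery and the Stokes interior theory you propose to invoke; in 2D the vorticity reduction is both simpler and sharper.
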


\begin{proof}
    Denote $\omega_w=\operatorname{curl}w\in C([0,T];H^{-1}(\Dom))\ \mathbf{P}-a.s.$ Since $\mathrm{dist}({B(x_0,r)}, \partial \Dom)>0,\ 0<t_1\leq t_2<T$ we can find $\epsilon$ small enough such that $0<t_1-2\epsilon<t_2+2\epsilon<T,\ \mathrm{dist}({B(x_0,r+2\epsilon)}, \partial \Dom)>0$.
    Let us consider $\psi\in C^{\infty}_c(\Dom)$ and use $\nabla^{\perp}\psi$ as test function in \eqref{weak formulation linear}. This implies that $\omega_w$ is a distributional solution of the heat equation \begin{align*}
        \partial_t \omega_w=\Delta \omega_w.
    \end{align*} 
Since $\omega_w$ solves the heat equation in distributions, a standard localization argument and regularity results for the heat equation (see e.g.\ \cite[Chapter 6, Section 1]{TaylorPDE3}) imply that
\begin{align*}
    \omega_w\in C([t_1-\epsilon,t_2+\epsilon],C^{\infty}(B(x_0,r+\epsilon)))\quad \mathbf{P}-a.s.
\end{align*}
Let us now consider a test function $\phi\in C^{\infty}_c(B(x_0,r+\epsilon))$ identically equal to one on $B(x_0,r+\epsilon/2)$. Since $\operatorname{div} w=0$, we have that $\hat{w}=\phi w$ solves the elliptic problem \begin{align*}
    \Delta\hat{w}=\nabla^{\perp}\omega_w\phi +\Delta \phi w+2\nabla\phi\cdot\nabla w,\quad \hat{w}|_{\partial B(x_0,r+\epsilon)}=0.
\end{align*}
Since $w\in C([t_1-\epsilon,t_2+\epsilon];L^2(B(x_0,r+\epsilon)))\ \mathbf{P}-a.s.$ by \autoref{regularity stokes}, it follows that \begin{align*}
    \nabla^{\perp}\omega_w\phi +\Delta \phi w+2\nabla\phi\cdot\nabla w\in C([t_0-\epsilon,T];H^{-1}(B(x_0,r+\epsilon)))\quad \mathbf{P}-a.s.
\end{align*}
Therefore, by standard elliptic regularity theory \begin{align*}
    \hat{w}\in C([t_1-\epsilon,t_2+\epsilon];H^{1}(B(x_0,r+\epsilon)))\quad \mathbf{P}-a.s.
\end{align*} From the fact that $\phi\equiv 1$ on $B(x_0,r+\epsilon/2)$ it follows that 
\begin{align*}
    w\in C([t_1-\epsilon/4,t_2+\epsilon/4];H^{1}(B(x_0,r+\epsilon/4)))\quad \mathbf{P}-a.s.
\end{align*}
Therefore, the required regularity of $w$ is established by inductively reiterating this argument and by considering test functions $\phi\in C^{\infty}_c(B(x_0,r+\frac{\epsilon}{2^{2n}}))$ identically equal to one on $B(x_0,r+\frac{\epsilon}{2^{2n+1}})$. 
\end{proof}

\subsection{Auxiliary Navier--Stokes Type Equations}\label{sec interior regularity nonlinear auxiliary}
In order to deal with the interior regularity of \eqref{modified NS} we perform a Serrin type argument, see \cite{lemarie2018navier,serrin1961interior}. The regularity of $w$ guaranteed by \autoref{corollary inteorior regularity linear} will play a crucial role to treat the linear terms appearing in \eqref{modified NS}. We start with the following lemma.
\begin{lemma}\label{Lemma uniform boundendness}
    Let \autoref{assumptions boundary noise} be satisfied.
    Let $v$ be the unique solution of  \eqref{modified NS} in the sense of \autoref{Def NS determ modified}, where $w$ is as in \autoref{corollary inteorior regularity linear}. Then, for all $0<t_1\leq t_2<T,$ $x_0\in \Dom $, $r>0$ such that $\mathrm{dist}({B(x_0,r)}, \partial \Dom)>0$,
    \begin{align*}
        v\in C([t_1,t_2], H^{3/2}({B(x_0,r)};\mathbb{R}^2)) \quad \mathbf{P}-a.s.
    \end{align*}
\end{lemma}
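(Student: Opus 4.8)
The plan is to run a bootstrap on $v$, interior-localizing against smooth cutoffs and using the known regularity of $w$ from \autoref{corollary inteorior regularity linear} as a perturbative input. First I would fix $0<t_1\le t_2<T$, a ball $B(x_0,r)$ with $\mathrm{dist}(B(x_0,r),\partial\Dom)>0$, and choose $\epsilon>0$ small so that $0<t_1-2\epsilon$, $t_2+2\epsilon<T$ and $\mathrm{dist}(B(x_0,r+2\epsilon),\partial\Dom)>0$. Since $v\in C([0,T];H)\cap L^2(0,T;V)\hookrightarrow L^4(0,T;\mathbb{L}^4)$ and $w$ is smooth on $B(x_0,r+2\epsilon)$ uniformly in $t\in[t_1-2\epsilon,t_2+2\epsilon]$, the nonlinear and coupling terms $v\cdot\nabla v$, $v\cdot\nabla w$, $w\cdot\nabla v$, $w\cdot\nabla w$ all make sense as distributions on the parabolic cylinder. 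The strategy is then the classical Serrin argument: test \eqref{auxiliary weak formulation nonlinear} against $\nabla^\perp\psi$ with $\psi\in C^\infty_c$, or work directly with the velocity equation against compactly supported test functions, to see that on the interior cylinder $v$ solves, in the sense of distributions,
\begin{align*}
\partial_t v=\Delta v-\p\big(v\cdot\nabla v+v\cdot\nabla w+w\cdot\nabla v+w\cdot\nabla w\big)-\nabla\pi
\end{align*}
with no boundary contribution, the pressure being recovered locally from the divergence-free constraint.

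Next I would localize: pick $\phi\in C^\infty_c(B(x_0,r+2\epsilon))$ with $\phi\equiv 1$ on $B(x_0,r+\epsilon)$ and a smooth temporal cutoff $\eta$ supported in $(t_1-2\epsilon,t_2+2\epsilon)$ equal to $1$ on $[t_1-\epsilon,t_2+\epsilon]$, and derive the equation satisfied by $\hat v=\eta\phi v$. The forcing in this localized equation consists of: (i) the commutator terms $[\Delta,\phi]v$ and $\eta'\phi v$, which lie in $C([\cdot];H^{-1})$ since $v\in C([0,T];H)$; (ii) the localized nonlinearity $\eta\phi\,\p(v\cdot\nabla v)$, which by $v\in L^4(0,T;\mathbb{L}^4)$ lies in $L^2$ in time with values in a negative Sobolev space; and (iii) the mixed terms with $w$, which are as regular as $v$ (one derivative less than $\min$ of the two) because $w$ is $C^\infty$ there. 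Applying deterministic parabolic maximal regularity / energy estimates to $\hat v$ on the whole space (or on a fixed larger ball with zero boundary data), I obtain a gain: starting from $v\in L^2_tV\cap L^\infty_t H$ one lands in, say, $L^2(t_1-\tfrac{3\epsilon}{2},t_2+\tfrac{3\epsilon}{2};H^{3/2}_{\mathrm{loc}})\cap C(\cdots;H^{1/2}_{\mathrm{loc}})$ or a comparable space, using that the critical nonlinearity $v\cdot\nabla v$ with $v\in L^4_t\mathbb{L}^4$ in 2D is exactly at the borderline where Serrin's argument produces smoothing. Reiterating with shrinking balls $B(x_0,r+2^{-k}\epsilon)$ and shrinking time windows pushes the spatial regularity up by a fixed amount at each step; in finitely many steps — in fact already at the first genuine step — one reaches $v\in C([t_1,t_2];H^{3/2}(B(x_0,r);\mathbb{R}^2))$, which is the claim. (The precise exponent $3/2$ is dictated by what one step of the bootstrap with an $L^4_t\mathbb{L}^4$ input yields; higher regularity would be the content of a subsequent lemma.)

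The main obstacle I anticipate is handling the pressure and the nonlinearity \emph{locally}: unlike the vorticity trick used for the linear equation $w$ in \autoref{corollary inteorior regularity linear} — where taking $\operatorname{curl}$ killed the pressure and left a clean heat equation — the nonlinear term $v\cdot\nabla v$ does not vanish under $\operatorname{curl}$, so one must either carry the local pressure (solving a local Neumann/Poisson problem for $\pi$ with a right-hand side built from $v\otimes v$ and commutators, and controlling it in the relevant negative-order spaces) or pass to the local vorticity equation $\partial_t\omega_v+\mathrm{div}(v\,\omega_v)=\Delta\omega_v$ and bootstrap $\omega_v$, then recover $v$ from $\omega_v$ via a local div-curl elliptic system exactly as in the proof of \autoref{corollary inteorior regularity linear}. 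The vorticity route is cleaner: $\omega_v\in C([0,T];H^{-1})$ initially, $v\,\omega_v$ is controlled in $L^2$ in time with values in a negative Sobolev space using $v\in L^4_t\mathbb{L}^4$ and the (improving) regularity of $\omega_v$, smoothing from the heat operator gives a gain on $\omega_v$, and then the elliptic system $\Delta\hat v=\nabla^\perp(\phi\omega_v)+(\text{commutators})$, $\hat v|_{\partial B}=0$, upgrades $v$; the interaction with the already-smooth $w$ in $\mathrm{div}((v+w)\omega_{v})$ and the source $\mathrm{div}((v+w)\omega_w)$ is harmless since $w$ and $\omega_w$ are $C^\infty$ on the relevant ball by \autoref{corollary inteorior regularity linear}. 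Tracking the exact numerology so that one obtains precisely $H^{3/2}$ (and not merely $H^{1+\delta}$ for small $\delta$) is a careful but routine bookkeeping exercise in Sobolev embeddings and parabolic trace/interpolation, and is where I would spend most of the writing effort.
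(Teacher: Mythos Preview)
Your proposal is correct and follows essentially the same route as the paper: pass to the vorticity $\omega=\operatorname{curl}v$, localize with smooth space--time cutoffs, run the Serrin argument on the resulting transport--diffusion equation (drift $v+w$, with the $w$-contributions harmless by \autoref{corollary inteorior regularity linear}), and then recover $v$ from $\omega$ via the local elliptic system $\Delta(\phi v)=\phi\nabla^\perp\omega+\text{commutators}$ with zero Dirichlet data on a slightly larger ball. One small correction on the bookkeeping: the paper reaches $C_tH^{3/2}$ in \emph{two} iterations, not one --- the first pass gives $\tilde\omega\in C_tL^2\cap L^2_tH^1$ locally (hence $v\in C_tH^1\cap L^2_tH^2$ via the elliptic step), and only then, with the improved regularity allowing the full right-hand side to sit in $L^2_tH^{-1/2}$, does a second pass yield $\hat\omega\in C_tH^{1/2}$ and thus $v\in C_tH^{3/2}$.
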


\begin{proof}
As described in \autoref{lemma weak solution linear equivalence}, arguing as in the proof of \cite[Theorem 7]{flandoli2023stochastic}, we can extend the weak formulation satisfied by $v$ to time dependent test functions $\phi\in C^1([0,T]; H)\cap C([0,T];\Do(A))$ obtaining that for each $t\in [0,T]$
\begin{align*}
    \langle v(t),\phi(t)\rangle-\langle u_0,\phi(0)\rangle& =  \int_0^t \langle v(s),\partial_s \phi(s)\rangle \,\dd s- \int_{0}^{t}\left\langle v\left(
s\right)  ,A\phi(s)\right\rangle \,\dd s\\ & +\int_{0}^{t}b\left(
v\left(  s\right)  +w\left(  s\right)  ,\phi(s),v\left(  s\right)  +w\left(
s\right)  \right)  \,\dd s \quad \mathbf{P}-a.s.
\end{align*}
Choosing $\phi=-\nabla^{\perp}\chi,\ \chi\in C^{\infty}_c((0,T)\times \Dom)$ in the weak formulation above and denoting by 
\begin{align*}
    \omega=\operatorname{curl}v&\in C([0,T];H^{-1})\cap L^2((0,T)\times \Dom)\quad \mathbf{P}-a.s., \\ 
    \omega_w=\operatorname{curl}w&\in  C([t_1,t_2], C^{\infty}({B(x_0,r)}))\quad \mathbf{P}-a.s.
\end{align*} it follows that \begin{align*}
    -\int_0^t \langle \omega(s),\partial_s\chi(s)\rangle+\langle \omega(s),\Delta \chi(s)\rangle \,\dd s &=\int_0^t \langle \operatorname{curl}(w(s)\otimes w(s)),\nabla \chi(s)\rangle \,\dd s \\ & +\int_0^t \langle \operatorname{curl}(w(s)\otimes v(s)),\nabla \chi(s)\rangle \,\dd s\\ &  +\int_0^t\langle \operatorname{curl}(v(s)\otimes w(s)),\nabla \chi(s)\rangle  \,\dd s\\ &+\int_0^t \langle \omega(s), v(s)\cdot\nabla \chi(s)\rangle \,\dd s.
\end{align*} 
This means that $\omega$ is a distributional solution in $(0,T)\times \Dom$ of the partial differential equation
\begin{align*}
    \partial_t \omega+v\cdot\nabla \omega
    &=\Delta \omega-\operatorname{div}\bigg(\operatorname{curl}(w(s)\otimes w(s))\\
    &+\operatorname{curl}(w(s)\otimes v(s))+\operatorname{curl}(v(s)\otimes w(s))\bigg).
\end{align*}
Since $\mathrm{dist}({B(x_0,r)}, \partial \Dom)>0,\ 0<t_1\leq t_2<T$ we can find $\epsilon$ small enough such that $0<t_1-2\epsilon<t_1\leq t_2<t_2+2\epsilon<T,\ \mathrm{dist}({B(x_0,r+2\epsilon)}, \partial \Dom)>0$. Let us consider $\psi\in C^{\infty}_c((0,T)\times \Dom)$ supported in $[t_1-\epsilon,t_2+\epsilon]\times B(x_0,r+\epsilon)$ such that it is equal to one in $[t_1-\epsilon/2,t_2+\epsilon/2]\times B(x_0,r+\epsilon/2)$. Let us denote by $\Tilde{\omega}=\omega \psi\in L^2((0,T)\times \mathbb{R}^2)$ supported in $[t_1-\epsilon,t_2+\epsilon]\times B(x_0,r+\epsilon)$, then $\Tilde{\omega}$ is a distributional solution in $(0,T)\times \mathbb{R}^2$ of
\begin{align}\label{distributional solution tilde omega 1}
    \partial_t \Tilde{\omega}&= \Delta \Tilde{\omega}-v\cdot \nabla\Tilde{\omega}-w\cdot\nabla\Tilde{\omega}+g
\end{align}
with \begin{align*}
 g & =\partial_t\psi \omega-2\nabla\psi\cdot\nabla \omega-\Delta \psi \omega+v\cdot\nabla\psi \omega-\psi w\cdot\nabla\omega_w-\psi v\cdot\nabla \omega_w+w\cdot\nabla\psi \omega.
\end{align*}
Due to \autoref{corollary inteorior regularity linear}
the terms \begin{align*}
    -\psi w\cdot\nabla\omega_w-\psi v\cdot\nabla \omega_w+w\cdot\nabla\psi \omega \in L^2((0,T)\times \mathbb{R}^2)\quad \mathbf{P}-a.s.
\end{align*}
Therefore $g\in L^2(0,T;H^{-1}(\mathbb{R}^2))+L^1(0,T;L^2(\mathbb{R}^2))\ \mathbf{P}-a.s.$ Then, arguing as in the first step of the proof of \cite[Theorem 13.2]{lemarie2018navier}, the fact that $\Tilde{\omega}$ is a distributional solution of \eqref{distributional solution tilde omega 1} implies that $\Tilde{\omega}\in C([0,T];L^2(\mathbb{R}^2))\cap L^2(0,T;H^1(\mathbb{R}^2)).$
Therefore \begin{align*}
  \omega\in & C([t_1-\epsilon/4,t_2+\epsilon/4],L^2(B(x_0,r+\epsilon/4)))\\ &\cap L^2(t_1-\epsilon/4,t_2+\epsilon/4,H^1(B(x_0,r+\epsilon/4)))\quad \mathbf{P}-a.s.  
\end{align*} Introducing $\phi\in C^{\infty}_c(B(x_0,r+\epsilon/4))$ equal to one in $B(x_0,r+\epsilon/8)$, since $\omega=\operatorname{curl }v$, then $\phi v$ satisfies
\begin{align}\label{elliptic u}
    \Delta(\phi v)=\nabla^{\perp}\omega\phi+\Delta \phi v+2\nabla\phi\cdot\nabla v ,\quad (\phi v)|_{\partial  B(x_0,r+\epsilon/4)}=0.  
\end{align}
From the regularity of $\omega$, by standard elliptic regularity theory (see for example \cite{ambrosio2019lectures}), it follows that $\phi v\in C([t_1-\epsilon/4,t_2+\epsilon/4];H^1(B(x_0,r+\epsilon/4);\mathbb{R}^2))\cap L^2(t_1-\epsilon/4,t_2+\epsilon/4;H^2(B(x_0,r+\epsilon/4);\mathbb{R}^2))\ \mathbf{P}-a.s$. Therefore, since $\phi\equiv 1$ on $B(x_0,r+\epsilon/8)$ \begin{align}\label{preliminary interior regularity v}
    v\in  & C([t_1-\frac{\epsilon}{16},t_2+\frac{\epsilon}{16}];H^1(B(x_0,r+\frac{\epsilon}{16});\mathbb{R}^2))\notag \\ &\cap L^2(t_1-\frac{\epsilon}{16},t_2+\frac{\epsilon}{16};H^2(B(x_0,r+\frac{\epsilon}{16});\mathbb{R}^2))\quad \mathbf{P}-a.s.
\end{align}
Let us now consider $\hat{\psi}\in C^{\infty}_c((t_1-\frac{\epsilon}{16},t_2+\frac{\epsilon}{16})\times B(x_0,r+\frac{\epsilon}{16}))$ such that it is equal to one in $[t_1-\frac{\epsilon}{32},t_2+\frac{\epsilon}{32}]\times B(x_0,r+\frac{\epsilon}{32})$. Let us denote by $\hat{\omega}=\omega \hat{\psi}\in C([0,T];L^2(\mathbb{R}^2))\cap L^2(0,T; H^1(\mathbb{R}^2))$ supported in $(t_1-\frac{\epsilon}{16},t_2+\frac{\epsilon}{16})\times B(x_0,r+\frac{\epsilon}{16})$, then $\hat{\omega}$ is a distributional solution in $(0,T)\times \mathbb{R}^2$ of
\begin{align}\label{distributional solution tilde omega 2}
    \partial_t \hat{\omega}&= \Delta \hat{\omega}+\hat{g}
\end{align}
with \begin{align*}
 \hat{g} & =-v\cdot \nabla\hat{\omega}-w\cdot\nabla\hat{\omega}+\partial_t\hat{\psi} \omega-2\nabla\hat{\psi}\cdot\nabla \omega-\Delta \hat{\psi} \omega+v\cdot\nabla\hat{\psi} \omega\\ &-\hat{\psi} w\cdot\nabla\omega_w-\hat{\psi} w\cdot\nabla\omega_w-\hat{\psi} v\cdot\nabla \omega_w+w\cdot\nabla\hat{\psi}\omega.
\end{align*}
By \autoref{corollary inteorior regularity linear} and relation \eqref{preliminary interior regularity v} it follows that $\hat{g}\in L^2(0,T;H^{-1/2}(\mathbb{R}^2))\ \mathbf{P}-a.s.$  Therefore $\hat{\omega}\in  C([0,T];H^{1/2}(\mathbb{R}^2))\cap L^2(0,T;H^{3/2}(\mathbb{R}^2)) \ \mathbf{P}-a.s.$ and arguing as above 
\begin{align*}
    v\in  & C([t_1-\frac{\epsilon}{64},{t}_2+\frac{\epsilon}{64}], H^{3/2}({B(x_0,{r}+\frac{\epsilon}{64})};\mathbb{R}^2))\\ &\cap L^2({t}_1-\frac{\epsilon}{64},{t}_2+\frac{\epsilon}{64},  H^{5/2}({B(x_0,{r}+\frac{\epsilon}{64})};\mathbb{R}^2))\quad \mathbf{P}-a.s.
    \end{align*}
    This concludes the proof of \autoref{Lemma uniform boundendness}.
\end{proof}
\begin{corollary}\label{corollary inteorior regularity nonlinear}
        Let \autoref{assumptions boundary noise} be satisfied.
        Let $v$ be the unique weak solution of  \eqref{modified NS} in the sense of \autoref{Def NS determ modified}, where $w$ is as in \autoref{corollary inteorior regularity linear}.  Then, for all $0<t_1\leq t_2<T,$ $x_0\in \Dom $, $r>0$ such that $\mathrm{dist}({B(x_0,r)}, \partial \Dom)>0$,
    \begin{align*}
        v\in C([t_1,t_2]; C^{\infty}({B(x_0,r)};\mathbb{R}^2))\quad \mathbf{P}-a.s.
    \end{align*}
\end{corollary}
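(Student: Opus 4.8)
The plan is to upgrade the regularity of $v$ by a Serrin-type bootstrap on the vorticity $\omega=\operatorname{curl}v$, alternating \emph{interior parabolic} regularity for $\omega$ with \emph{interior elliptic} regularity for the div--curl system $\operatorname{div}v=0$, $\operatorname{curl}v=\omega$, in the spirit of \cite{serrin1961interior,lemarie2018navier} and of the proof of \autoref{Lemma uniform boundendness}. Fix $\epsilon>0$ with $0<t_1-2\epsilon<t_1\le t_2<t_2+2\epsilon<T$ and $\mathrm{dist}(B(x_0,r+2\epsilon),\partial\Dom)>0$, set $I_n=[t_1-\epsilon 2^{-n},t_2+\epsilon 2^{-n}]$ and $B_n=B(x_0,r+\epsilon 2^{-n})$, so that $I_n\times B_n$ decreases to $[t_1,t_2]\times\overline{B(x_0,r)}$. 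The claim, to be proven by induction on $n\ge 0$, is that
\begin{equation*}
v\in C(I_n;H^{3/2+n}(B_n;\mathbb{R}^2))\cap L^2(I_n;H^{5/2+n}(B_n;\mathbb{R}^2))\qquad \mathbf{P}\text{-a.s.}
\end{equation*}
the case $n=0$ being \autoref{Lemma uniform boundendness} on the slightly larger cylinder $I_0\times B_0$, together with the $L^2(H^{5/2})$-bound established inside its proof; correspondingly $\omega\in C(H^{1/2+n})\cap L^2(H^{3/2+n})$ locally (abbreviating the cylinder).

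For the inductive step I would first rewrite the vorticity equation: since $v$ and $w$ are divergence free, taking the curl of the equation for $v=u-w$ and using the two-dimensional identity $\operatorname{curl}((v+w)\cdot\nabla(v+w))=(v+w)\cdot\nabla(\omega+\omega_w)$, one obtains, in the sense of distributions on $I_n\times B_n$ (the convective term being read in divergence form $\operatorname{div}((v+w)\,\omega)$, which is legitimate since $\operatorname{div}v=0$ and $(v+w)\,\omega\in L^2$),
\begin{equation*}
\partial_t\omega+(v+w)\cdot\nabla\omega=\Delta\omega-(v+w)\cdot\nabla\omega_w ,
\end{equation*}
where $\omega_w=\operatorname{curl}w$ is of class $C(I_0;C^\infty(\overline{B_0}))$ by \autoref{corollary inteorior regularity linear}. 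Next I would localize with a space--time cutoff $\psi$ supported in $I_n\times B_n$ and equal to $1$ on $I_{n+1}\times B_{n+1}$: the function $\widehat\omega=\psi\omega$ solves $\partial_t\widehat\omega-\Delta\widehat\omega=\widehat g$ on $(0,T)\times\mathbb{R}^2$ with vanishing initial value, where $\widehat g$ collects the transport term $-(v+w)\cdot\nabla\widehat\omega$, the commutator terms produced by $\psi$, and $-\psi(v+w)\cdot\nabla\omega_w$. Using that $v+w\in C(H^{3/2+n})$ is a pointwise multiplier on $H^{n+1/2}(\mathbb{R}^2)$ (here $3/2+n>1$, so $H^{3/2+n}(\mathbb{R}^2)$ acts as a multiplier on $H^\sigma$ for $0\le\sigma\le 3/2+n$) and that $\omega_w$ is smooth in space, one checks that $\widehat g\in L^2(H^{1/2+n})$; interior parabolic regularity for the heat equation (see e.g.\ \cite[Chapter 6]{TaylorPDE3}) then gives $\widehat\omega\in C(H^{3/2+n})\cap L^2(H^{5/2+n})$, hence $\omega\in C(H^{3/2+n})\cap L^2(H^{5/2+n})$ locally. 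Finally, since $\operatorname{div}v=0$, a spatial cutoff $\phi$ and \eqref{elliptic u} show that $\phi v$ solves $\Delta(\phi v)=\phi\nabla^\perp\omega+2\nabla\phi\cdot\nabla v+\Delta\phi\,v$ with zero boundary value; its right-hand side lies in $C(H^{1/2+n})\cap L^2(H^{3/2+n})$ locally, by the newly gained regularity of $\omega$ and the stage-$n$ regularity of $v$ for the lower-order terms, so interior elliptic regularity (see e.g.\ \cite{ambrosio2019lectures}) upgrades $v$ to $C(H^{5/2+n})\cap L^2(H^{7/2+n})$ on $I_{n+1}\times B_{n+1}$, which is the claim at stage $n+1$.

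Since $I_n\times B_n\supseteq[t_1,t_2]\times\overline{B(x_0,r)}$ for every $n$, the inductive claim yields $v\in C([t_1,t_2];H^{k}(B(x_0,r);\mathbb{R}^2))$ for all $k$, and the Sobolev embedding $H^k(B(x_0,r))\hookrightarrow C^m(\overline{B(x_0,r)})$ in dimension two (for $k$ large relative to $m$) gives $v\in C([t_1,t_2];C^\infty(\overline{B(x_0,r)};\mathbb{R}^2))$, $\mathbf{P}$-a.s., which is the assertion. I expect the only genuine difficulties to be bookkeeping ones: organizing the nested space--time cutoffs so that at each stage the localized parabolic (resp.\ elliptic) problem has vanishing initial datum (resp.\ boundary datum) and interior-supported data — already carried out in \autoref{corollary inteorior regularity linear} and \autoref{Lemma uniform boundendness} — and checking that the Sobolev-multiplication estimates close already at the first step, where $v$ is only known to lie in $C(H^{3/2})$ and the convective term must be handled in divergence form; both go through because $3/2>1$. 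No higher regularity in time can be obtained by this scheme, consistently with the discussion following \autoref{interior regularity theorem} and with the fact that $\omega_w$ is itself only continuous in time.
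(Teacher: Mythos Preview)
Your proposal is correct and follows essentially the same Serrin-type scheme as the paper: localize the vorticity equation, invoke interior parabolic regularity for the heat equation to upgrade $\omega$, then recover $v$ via the elliptic div--curl system \eqref{elliptic u}, and iterate on shrinking space--time cylinders. The only cosmetic difference is that you organize the bootstrap along the half-integer Sobolev scale using the multiplication property of $H^{3/2+n}(\mathbb{R}^2)$, whereas the paper first extracts $v\in C(L^\infty)$ from \autoref{Lemma uniform boundendness} via Sobolev embedding and then runs the iteration in integer steps as in \cite[Theorem 13.1]{lemarie2018navier}; both bookkeepings close for the same reasons.
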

\begin{proof}
Since $\mathrm{dist}({B(x_0,r)}, \partial \Dom)>0,\ 0<t_1\leq t_2<T$ we can find $\epsilon$ small enough such that $0<t_1-2\epsilon<t_1\leq t_2<t_2+2\epsilon<T,\ \mathrm{dist}({B(x_0,r+2\epsilon)}, \partial \Dom)>0$ and $\psi\in C^{\infty}_c((0,T)\times\Dom)$ supported in $[t_1-\epsilon,t_2+\epsilon]\times B(x_0, r+\epsilon)$ such that it is equal to one in $[t_1+\epsilon/2,t_2+\epsilon/2]\times B(x_0,r+\epsilon/2)$.
From \autoref{Lemma uniform boundendness} and Sobolev embedding theorem we know that $v\in C([t_1-\epsilon,t_2+\epsilon];L^{\infty}(B(x_0,{r}+\epsilon);\mathbb{R}^2))\ \mathbf{P}-a.s.$
Denoting by
\begin{align*}
    &\omega=\operatorname{curl}v\in C([0,T];H^{-1})\cap L^2((0,T)\times \Dom), \\ & \omega_w=\operatorname{curl}w\in  C([t_1-2\epsilon,t_2+\epsilon], C^{\infty}({B(x_0,r+2\epsilon)}))\quad \mathbf{P}-a.s.
    \end{align*}
    and $\Tilde{\omega}=\omega \psi\in L^2((0,T)\times \mathbb{R}^2)$ supported in $[t_1-\epsilon,t_2+\epsilon]\times B(x_0,r+\epsilon)$, then, arguing as in the proof of \autoref{Lemma uniform boundendness}, it follows that $\Tilde{\omega}$ is a distributional solution in $(0,T)\times B(x_0, r+\epsilon)$ of
\begin{align}\label{distributional solution tilde omega 3}
    \partial_t \tilde{\omega}&= \Delta \tilde{\omega}+\tilde{g}
\end{align}
with \begin{align*}
 \tilde{g} & =-v\cdot \nabla\tilde{\omega}-w\cdot\nabla\tilde{\omega}+\partial_t{\psi} \omega-2\nabla{\psi}\cdot\nabla \omega-\Delta {\psi} \omega+v\cdot\nabla{\psi} \omega\\ &-{\psi} w\cdot\nabla\omega_w-{\psi} v\cdot\nabla \omega_w+w\cdot\nabla{\psi}\omega.
\end{align*}
From the regularity of $\omega,\ v,\ \tilde\omega,\ \omega_w,\ w$, then $\tilde g \in L^2(t_1-\epsilon,t_2+\epsilon;H^{-1}(B(x_0,r+\epsilon)))\ \mathbf{P}-a.s$. By standard regularity theory for the heat equation, see for example Step 2 in \cite[Theorem 13.1]{lemarie2018navier}, a solution of \eqref{distributional solution tilde omega 3} with $\tilde g \in L^2(t_1-\epsilon,t_2+\epsilon;H^{k-1}(B(x_0,r+\epsilon))),\ k\in\mathbb{N}$, belongs to $C([{t}_1-\epsilon/2,{t}_2+\epsilon/2];H^k(B(x_0,{r}+\epsilon/2)))\cap L^2({t}_1-\epsilon/2,{t}_2+\epsilon/2;H^{k+1}(B(x_0,{r}+\epsilon/2)))$. Therefore \begin{align*}
    \tilde \omega &\in C([{t}_1-\epsilon/2,{t}_2+\epsilon/2];L^2(B(x_0,{r}+\epsilon/2)))\\ &\cap L^2({t}_1-\epsilon/2,{t}_2+\epsilon/2;H^{1}(B(x_0,{r}+\epsilon/2)))\quad \mathbf{P}-a.s.
\end{align*} 
which implies 
\begin{align*}
     \omega &\in C([{t}_1+\epsilon/4,{t}_2-\epsilon/4;L^2(B(x_0,{r}+\epsilon/4)))\\ &\cap L^2({t}_1-\epsilon/4,{t}_2+\epsilon/4;H^{1}(B(x_0,{r}+\epsilon/4)))\quad \mathbf{P}-a.s.
\end{align*} 
since $\psi\equiv 1$ on $({t}_1-\epsilon/2,{t}_2+\epsilon/2)\times B(x_0,{r}+\epsilon/2).$ Considering now $\phi\in C^{\infty}_c(\Dom)$ supported on $B(x_0,r+\epsilon/4)$ such that $\phi\equiv 1 $ on $B(x_0,{r}+\epsilon/8)$, since $\operatorname{curl}v=\omega$ then 
$\phi v$ satisfies 
\begin{align}\label{elliptic v}
    \Delta(\phi v)=\nabla^{\perp}\omega \phi+\Delta \phi v+2\nabla\phi\cdot\nabla v,\quad (\phi v)|_{\partial B(x_0,r+\epsilon/4)}=0.  
\end{align}
Since 
\begin{align*}
 \nabla^{\perp}\omega \phi+\Delta \phi v+2\nabla\phi\cdot\nabla v    &\in C([{t}_1+\epsilon/4,{t}_2-\epsilon/4;H^{-1}(B(x_0,{r}+\epsilon/4)))\\ &\cap L^2({t}_1-\epsilon/4,{t}_2+\epsilon/4;L^{2}(B(x_0,{r}+\epsilon/4)))\quad \mathbf{P}-a.s., 
\end{align*}
by standard elliptic regularity theory (see for example \cite{ambrosio2019lectures}), \begin{align*}
    \phi v & \in C([{t}_1+\epsilon/4,{t}_2-\epsilon/4;H^1(B(x_0,{r}+\epsilon/4)))\\ &\cap L^2({t}_1-\epsilon/4,{t}_2+\epsilon/4;H^{2}(B(x_0,{r}+\epsilon/4)))\quad \mathbf{P}-a.s..
\end{align*}
Since $\phi \equiv 1$ on $B(x_0,{r}+\epsilon/8)$ then 
\begin{align*}
v& \in C([{t}_1+\frac{\epsilon}{16},{t}_2-\frac{\epsilon}{16};H^{1}(B(x_0,{r}+\frac{\epsilon}{16})))\\ & \cap L^2({t}_1-\frac{\epsilon}{16},{t}_2+\frac{\epsilon}{16};H^2(B(x_0,{r}+\frac{\epsilon}{16})))\quad \mathbf{P}-a.s.    
\end{align*}
Reiterating the argument as in Step 3 in \cite[Theorem 13.1]{lemarie2018navier} the thesis follows.
\end{proof}

\begin{proof}[Proof of \autoref{interior regularity theorem}]
The claim follows by  \autoref{corollary inteorior regularity linear}, \autoref{corollary inteorior regularity nonlinear} and a localization argument. Moreover, to obtain the claimed smoothness up to time $t=T$, let us consider the extension by $0$ of $h_b$ on $[0,T+1]$, i.e.\
    \begin{equation*}
        \widetilde{h}_b(t)=
        \left\{
        \begin{aligned}
            &h_b(t),\quad & \text{if }\ &t\in (0,T),\\
            &0, & \text{if }\ & t\in (T,T+1).
        \end{aligned}\right.
    \end{equation*} 
    Let $\widetilde{u}$ be the unique weak solution \eqref{Intro equation} provided by \autoref{main thm} with $T$ replaced by $T+1$. Then, by  \autoref{corollary inteorior regularity linear}, \autoref{corollary inteorior regularity nonlinear} and a standard covering argument, for all $t_0\in (0,T)$, $\Dom_0\subset \Dom $ such that $\mathrm{dist}({\Dom_0}, \partial \Dom)>0$,
    \begin{align}
    \label{eq:regularity_u_tilde}
        \widetilde{u}\in C([t_0,T];C^{\infty}(\Dom_0;\mathbb{R}^2))\quad \mathbf{P}-a.s.
    \end{align} 

    Now, let $u$ be the unique weak solution of \eqref{Intro equation} provided by \autoref{main thm}. By uniqueness, we have $u=\widetilde{u}|_{[0,T]}$ and the conclusion follows from \eqref{eq:regularity_u_tilde}.
\end{proof}

\subsection*{Acknowledgements} The authors thank Professor Franco Flandoli for useful discussions and valuable insight into the subject. In particular, A.A. would like to thank professor Franco Flandoli for hosting and financially contributing to his research visit at Scuola Normale di Pisa in January 2023, where this work started.
E.L. would like to express sincere gratitude to Professor Marco Fuhrman for igniting his interest in this particular field of research.  E.L. want to thank Professor Matthias Hieber and Dr. Martin Saal for useful discussions.

\appendix 

\section{$H^{\infty}$-calculus for the Stokes operator}\label{appendinx H infty calulus}
In this appendix we prove \autoref{t:bounded_H_infty}. 
Here we use the transference result proven in \cite{KW17_stokes}. In this section we also need the concept of $\rsec$-sectoriality, which can be again found in \cite[Chapters 3 and 4]{pruss2016moving} and \cite[Chapter 10]{Analysis2}. 

To discuss the main idea in the proof of \autoref{t:bounded_H_infty}, let us define the operator $B_q v:=- \Delta v$ on $L^q(\Dom;\R^2)$ with domain
\begin{align*}
\Do(B_q):= \big\{f=(f_1,f_2)\in W^{2,q}(\Dom;\R^2)\,:\,
&   f|_{\Gamma_b}=0, \\  
& f_2 |_{\Gamma_u}=\partial_z f_1|_{\Gamma_u}=0\big\}.
\end{align*}
Note that $B_q$ has the same boundary conditions of $A_q$. However, $B_q$ is considerably more simple than $A_q$ since, to study its spectral properties, it is possible to use reflection arguments which are not available for the Stokes operator, cf.\ \Cref{ss:laplace_H}.

We prove \autoref{t:bounded_H_infty} by using the transference techniques developed in \cite{KW17_stokes}. By \cite[Theorem 9]{KW17_stokes}, we divide the proof into the following steps:

\begin{enumerate}[leftmargin=*]
\item\label{it:Stokes_H_calculus2} Boundedness of the $H^{\infty}$-calculus for $A_2$ and $B_2$ (i.e.\ in the Hilbertian case).
\item\label{it:Laplacian_H_calculus} Boundedness of the $H^{\infty}$-calculus for $B_q$ for all $q\in (1,\infty)$.
\item\label{it:Stokes_R_sectoriality} $\rsec$-sectoriality of $A_q$.
\item\label{it:transference} Conclusion via transference and interpolation \cite[Theorems 5 and 9]{KW17_stokes}.
\end{enumerate}

\subsection{The Hilbertian case}
Here we analyse the $L^2$-case of \autoref{t:bounded_H_infty}, i.e.\ the operators $A:=A_2$ and $B:=B_2$ acting on $\Ls^2$ and $L^2(\Dom;\R^2)$, respectively. 

\begin{proposition}
\label{prop:self_adjoint} $A$ and $B$ are positive self-adjoint operators, and
\begin{equation}
\label{eq:kato_characterization}
\begin{aligned}
     \Do(A^{\gamma})=\Do((A^*)^{\gamma})=\Hs^{2\gamma}(\Dom),\quad \Do(B^{\gamma})=\Do((B^*)^{\gamma})= H^{2\gamma}(\Dom;\R^2)
\end{aligned}
\end{equation}
for all $\gamma\in (0,\tfrac{1}{4})$.
\end{proposition}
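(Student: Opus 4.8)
The plan is to realise $A$ and $B$ as the operators associated with coercive symmetric sesquilinear forms, and then to obtain the fractional domains by complex interpolation. On $L^2(\Dom;\R^2)$ I would take the symmetric form $\mathfrak{b}(u,v):=\int_{\Dom}\nabla u:\nabla v\,\dd x\dd z$ with form domain $\mathbb{W}:=\{f\in H^1(\Dom;\R^2):f|_{\Gamma_b}=0,\ f_2|_{\Gamma_u}=0\}$, and on $\mathbb{L}^2$ the restriction $\mathfrak{a}$ of $\mathfrak{b}$ to $\mathbb{V}:=\mathbb{W}\cap\mathbb{L}^2$. Both $\mathbb{W}$ and $\mathbb{V}$ are closed subspaces of $H^1(\Dom;\R^2)$, hence complete for the form norm, and dense in $L^2(\Dom;\R^2)$, resp.\ $\mathbb{L}^2$, since they contain $\mathcal{D}(\Dom)$. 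As every $f\in\mathbb{W}$ vanishes on $\Gamma_b=\T\times\{0\}$, integration along vertical segments gives the Poincaré inequality $\|f\|_{L^2(\Dom)}\lesssim\|\partial_z f\|_{L^2(\Dom)}\leq\mathfrak{b}(f,f)^{1/2}$, so both forms are coercive. By the first representation theorem for closed symmetric forms they define invertible, positive, self-adjoint operators $\widetilde{B}$ on $L^2(\Dom;\R^2)$ and $\widetilde{A}$ on $\mathbb{L}^2$ with $\Do(\widetilde{B}^{1/2})=\mathbb{W}$ and $\Do(\widetilde{A}^{1/2})=\mathbb{V}$.

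Next I would identify $\widetilde{B}$ with $B$ and $\widetilde{A}$ with $A$. For $u\in\Do(B)$ and $v\in\mathbb{W}$, Green's identity applied componentwise together with the boundary conditions defining $\Do(B)$ (the Dirichlet conditions kill the boundary integral over $\Gamma_b$ and the $z$-component contribution over $\Gamma_u$, while $\partial_z u_1|_{\Gamma_u}=0$ removes the last term) yields $\mathfrak{b}(u,v)=\langle-\Delta u,v\rangle$, so $B\subseteq\widetilde{B}$; likewise $A\subseteq\widetilde{A}$, using that $\p_2$ is self-adjoint on $L^2(\Dom;\R^2)$. Conversely, $\widetilde{B}$ and $\widetilde{A}$ are surjective: given $f$, the Lax--Milgram weak solution of $\mathfrak{b}(u,\cdot)=\langle f,\cdot\rangle$ (resp.\ the weak solution of the associated resolvent Stokes problem) belongs to $H^2(\Dom;\R^2)$ by $L^2$-elliptic regularity. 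For $B$ this is elementary, since the two scalar problems are a mixed Dirichlet--Neumann, resp.\ a pure Dirichlet, Laplacian on $\T\times(0,a)$, handled by even/odd reflection across $\Gamma_b$ and $\Gamma_u$; for $A$ one argues exactly as in Step~3 of the proof of \autoref{Regularity of the Neumann map} (tangential difference quotients in $x$, then reading off $\partial_{zz}$ from the equation). Hence the Lax--Milgram solution lies in $\Do(B)$, resp.\ $\Do(A)$, so $B+1$ and $A+1$ are surjective; being injective self-adjoint extensions, $\widetilde{B}=B$ and $\widetilde{A}=A$. This shows that $A,B$ are positive self-adjoint (in particular $A^*=A$, $B^*=B$), with $\Do(A^{1/2})=\mathbb{V}$ and $\Do(B^{1/2})=\mathbb{W}$.

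For the fractional powers, a positive self-adjoint operator has a bounded $H^{\infty}$-calculus of angle $0$ (see e.g.\ \cite[Ch.~10]{Analysis2}), so complex interpolation identifies the fractional domains: $\Do(A^{\gamma})=[\mathbb{L}^2,\mathbb{V}]_{2\gamma}$ and $\Do(B^{\gamma})=[L^2(\Dom;\R^2),\mathbb{W}]_{2\gamma}$ for $\gamma\in(0,\tfrac12)$. When $\gamma<\tfrac14$, i.e.\ $2\gamma<\tfrac12$, the boundary conditions defining $\mathbb{W}$ are of order $<\tfrac12$ and hence invisible to interpolation of that order, so $[L^2(\Dom;\R^2),\mathbb{W}]_{2\gamma}=H^{2\gamma}(\Dom;\R^2)$ by the standard interpolation theory of Sobolev spaces with boundary conditions (see e.g.\ \cite{BeLo} and the references in \Cref{sss:notation}). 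For $A$, since the Helmholtz projection $\p_2$ is bounded and compatible on $\{H^{s,2}(\Dom;\R^2)\}_{s\in[0,1]}$, it is a retraction onto $\{\Hs^{s,2}(\Dom)\}_{s\in[0,1]}$; together with $\mathbb{V}\embed\Hs^{1,2}(\Dom)$ this gives $\Do(A^{\gamma})\embed[\mathbb{L}^2,\Hs^{1,2}(\Dom)]_{2\gamma}=\Hs^{2\gamma,2}(\Dom)$, while the converse inclusion $\Hs^{2\gamma,2}(\Dom)\embed[\mathbb{L}^2,\mathbb{V}]_{2\gamma}$ for $2\gamma<\tfrac12$ is the classical statement that the low-order fractional powers of the no-slip-type Stokes operator coincide with the solenoidal Bessel potential spaces (see e.g.\ \cite{pruss2016moving} and the references therein). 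Altogether $\Do(A^{\gamma})=\Hs^{2\gamma,2}(\Dom)$ and $\Do(B^{\gamma})=H^{2\gamma}(\Dom;\R^2)$ for $\gamma\in(0,\tfrac14)$, which is \eqref{eq:kato_characterization}.

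The form/Kato step and, modulo the $L^2$-regularity already available through \autoref{Regularity of the Neumann map}, the identification step are routine. The delicate point is the last step: one must treat the solenoidal constraint simultaneously with the invisibility of the boundary conditions, and it is precisely here that the bound $\gamma<\tfrac14$ enters --- for $\gamma\geq\tfrac14$ the condition $v|_{\Gamma_b}=0$ becomes visible and the clean description breaks down. Since $\p_2$ does not preserve these boundary conditions, the solenoidal statement cannot be deduced naively from its scalar analogue, and one must invoke (or reprove) the classical computation of the small fractional powers of the Stokes operator.
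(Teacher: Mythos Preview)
Your approach is correct and essentially parallel to the paper's, though you package the self-adjointness step differently: you realise $A$ and $B$ as the operators associated with coercive symmetric forms (Kato's first representation theorem), while the paper verifies symmetry directly and then shows $\Do(A^*)\subseteq\Do(A)$ using the solvability result \autoref{Proposition well posed L2}. Both routes land on self-adjointness plus $L^2$-elliptic regularity, and from there the complex interpolation argument is the same in spirit --- the paper interpolates $[L^2,\Do(B)]_{\gamma}$ and cites Seeley for the boundary-condition threshold, whereas you interpolate $[L^2,\Do(B^{1/2})]_{2\gamma}$, which is equivalent by reiteration and arguably cleaner since the form domain $\mathbb{W}$ carries only Dirichlet-type conditions. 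For the Stokes operator, both you and the paper ultimately defer to the Fujita--Morimoto argument for the delicate inclusion $\Hs^{2\gamma}\embed\Do(A^{\gamma})$, and you correctly flag this as the nontrivial step.

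One small slip: your density justification ``since they contain $\mathcal{D}(\Dom)$'' works for $\mathbb{V}\subset\mathbb{L}^2$ but not for $\mathbb{W}\subset L^2(\Dom;\R^2)$, because $\mathcal{D}(\Dom)$ consists only of divergence-free fields and is therefore not dense in $L^2(\Dom;\R^2)$. The fix is immediate --- $\mathbb{W}$ contains all of $C^{\infty}_c(\Dom;\R^2)$ --- but the sentence as written is wrong. Also, since your form is already coercive by Poincar\'e, the shift to ``$B+1$'' in the surjectivity step is unnecessary; $B$ itself is onto.
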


The above result and \cite[Proposition 10.2.23]{Analysis2} imply that $A$ and $B$ have bounded $H^{\infty}$-calculus of angle $0$. 
Below we mainly focus on the operator $A$. The argument to treat $B$ is analogous and simpler.

Consider the elliptic problem associated to $A$, i.e.\
\begin{equation}\label{elliptic problem}
\left\{
\begin{aligned}
-\Delta u+\nabla \pi  & =f, \qquad& \text{ on }& \Dom,\\
\mathrm{div}\, u   &=0, & \text{ on }& \Dom,  \\
u     &=0, & \text{ on }& \Gamma_b,\\
\partial_{z}u_1   &=0,\qquad & \text{ on }& \Gamma_u,\\
u_2     &=0,\qquad& \text{ on }& \Gamma_u.
\end{aligned}\right.
\end{equation}

If $f\in V^*$, the definition of weak solutions $u\in V$ is standard and similar to the one of \eqref{eq:psi_f_problem}. The well-posedness of \eqref{elliptic problem} is analysed below.

\begin{proposition}\label{Proposition well posed L2}
    For each $f\in V^*$ there exists a unique solution of problem \eqref{elliptic problem}. 
    Moreover if $f\in H$ then $u\in \Do(A)$ and
    \begin{align*}
        \lVert u\rVert_{\Do(A)}+\lVert \pi\rVert_{H^1/\mathbb{R}}\leq C\lVert f\rVert.
    \end{align*}
\end{proposition}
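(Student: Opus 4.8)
The plan is to obtain existence and uniqueness of the weak solution by the Lax--Milgram lemma, and then to upgrade regularity by a tangential difference quotient argument in the spirit of Step 3 of the proof of \autoref{Regularity of the Neumann map}. For the first part, consider the bilinear form $a(u,\varphi)=\int_{\Dom}\nabla u:\nabla\varphi\,\dd x\dd z$ on $V\times V$. It is bounded, and it is coercive because elements of $V$ vanish on $\Gamma_b$, so that the Poincaré inequality gives $\|\nabla u\|_{L^2(\Dom;\R^2)}\eqsim\|u\|_V$. Hence for every $f\in V^*$ there is a unique $u\in V$ with $\int_{\Dom}\nabla u:\nabla\varphi\,\dd x\dd z=\langle f,\varphi\rangle_{V^*,V}$ for all $\varphi\in V$ and $\|u\|_V\lesssim\|f\|_{V^*}$; in particular $\langle-\Delta u-f,\varphi\rangle=0$ for every $\varphi\in\mathcal{D}(\Dom)$, so by the De Rham theorem (\cite[Corollary III.5.1, Lemma IV.1.1]{G11_book}, cf.\ also \cite[Proposition 1.1, Proposition 1.2]{temam2001navier}) there is a unique $\pi\in L^2(\Dom)/\R$ with $-\Delta u+\nabla\pi=f$ in $\mathcal{D}'(\Dom;\R^2)$ and $\|\pi\|_{L^2(\Dom)/\R}\lesssim\|f\|_{V^*}$. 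Uniqueness of $(u,\pi)$ is immediate from that of $u$.

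For the regularity statement assume $f\in H$. Extending $u$ and $f$ periodically in the $x$ variable and writing $\tau_h v(x,z)=\frac{v(x+h,z)-v(x,z)}{h}$, the function $\phi=\tau_{-h}\tau_h u$ is an admissible test function in $V$, since translations in the periodic direction preserve the divergence-free constraint and all the boundary conditions defining $V$. Testing the weak formulation against it and moving one difference quotient by discrete integration by parts gives
\begin{align*}
\|\nabla\tau_h u\|_{L^2(\Dom)}^2&=\int_{\Dom} f\cdot\tau_{-h}\tau_h u\,\dd x\dd z\leq\|f\|\,\|\tau_{-h}\tau_h u\|_{L^2(\Dom)}\\
&\leq\|f\|\,\|\nabla\tau_h u\|_{L^2(\Dom)},
\end{align*}
where in the last step we used $\|\tau_{-h}g\|_{L^2(\Dom)}\leq\|\partial_x g\|_{L^2(\Dom)}$ with $g=\tau_h u\in H^1(\Dom;\R^2)$ together with $\|\partial_x\tau_h u\|_{L^2(\Dom)}\leq\|\nabla\tau_h u\|_{L^2(\Dom)}$. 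Thus $\|\nabla\tau_h u\|_{L^2(\Dom)}\leq\|f\|$ uniformly in $h$, which yields $\partial_x\nabla u\in L^2(\Dom)$ with $\|\partial_x\nabla u\|_{L^2(\Dom)}\leq\|f\|$.

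It remains to recover the missing second derivatives and the pressure regularity. Testing the weak formulation against $\phi=\partial_x\psi$ with $\psi\in\mathcal{D}(\Dom)$ and invoking \cite[Proposition 1.1, Proposition 1.2]{temam2001navier} gives $\partial_x\pi\in L^2(\Dom)$ with $\|\partial_x\pi\|_{L^2(\Dom)}\lesssim\|f\|$. For the normal derivative of $\pi$ we use the second component of the momentum equation, $\partial_z\pi=f_2+\partial_{xx}u_2+\partial_{zz}u_2$, where $\partial_{zz}u_2=-\partial_{xz}u_1\in L^2(\Dom)$ by $\div u=0$ and the previous step; hence $\nabla\pi\in L^2(\Dom;\R^2)$, i.e.\ $\pi\in H^1(\Dom)/\R$ with $\|\pi\|_{H^1(\Dom)/\R}\lesssim\|f\|$. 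Finally the first component of the momentum equation gives $\partial_{zz}u_1=\partial_x\pi-f_1-\partial_{xx}u_1\in L^2(\Dom)$, so altogether $u\in H^2(\Dom;\R^2)$ with $\|u\|_{H^2(\Dom;\R^2)}\lesssim\|f\|$.

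To conclude, integrating by parts in the weak formulation against test functions in $V$ that need not vanish near $\Gamma_u$ (e.g.\ constructed from stream functions with prescribed $1$-jet on $\Gamma_u$) identifies the remaining natural boundary condition $\partial_z u_1|_{\Gamma_u}=0$ in the trace sense; combined with $u|_{\Gamma_b}=0$, $u_2|_{\Gamma_u}=0$ and $\div u=0$ this shows $u\in\Do(A)$, and then $A u=-\p\Delta u=f$ (as $\p f=f$ since $f\in H=\Ls^2$ and $\p\nabla\pi=0$). The estimate $\|u\|_{\Do(A)}+\|\pi\|_{H^1(\Dom)/\R}\lesssim\|f\|$ then follows since $\|\cdot\|_{\Do(A)}\eqsim\|\cdot\|_{H^2(\Dom;\R^2)}$ by \autoref{t:bounded_H_infty}. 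The only genuinely technical point is the tangential difference quotient estimate and the bookkeeping used to recover $\nabla\pi$ and the normal second derivatives of $u$ from the equations; the rest is the classical Lax--Milgram/De Rham/elliptic regularity scheme for the stationary Stokes system (the argument for $B$ being entirely analogous and simpler).
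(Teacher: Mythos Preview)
Your proof is correct and follows essentially the same route as the paper: Lax--Milgram for weak existence and uniqueness, De Rham/Temam for the pressure, tangential difference quotients for $\partial_x\nabla u\in L^2$, and then recovery of $\nabla\pi$ and the remaining normal derivatives from the momentum equation and the divergence constraint. One small caveat: your final appeal to \autoref{t:bounded_H_infty} for the norm equivalence $\|\cdot\|_{\Do(A)}\eqsim\|\cdot\|_{H^2}$ is circular (that theorem is proved in the appendix using the present proposition), but it is also unnecessary, since you have already established $Au=f$ and $\|u\|_{H^2}\lesssim\|f\|$, so $\|u\|_{\Do(A)}=\|u\|+\|Au\|\lesssim\|f\|$ directly.
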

\begin{proof}
Existence of weak solutions follows immediately by Lax-Milgram Lemma, since Poincaré inequality holds in $V$. Therefore we can endow $V$ with the norm $\lVert \nabla u\rVert_V:=\lVert \nabla u\rVert_{L^2}$ equivalent to the standard $H^1$ norm. Let now $f\in H$, therefore the weak formulation satisfied by $u$ reduces to 
\begin{align}
    \langle \nabla u,\nabla \phi\rangle=\langle f,\phi\rangle\quad \forall\phi \in V \label{weak formulation 2 auxiliary}.
\end{align}
Considering $\phi\in \mathcal{D}(\Dom)$ it follows that 
\begin{align*}
    \langle \Delta u+f,\phi\rangle_{\mathcal{D}(\Dom)', \mathcal{D}(\Dom)}=0,
\end{align*}
therefore by \cite[Proposition 1.1, Proposition 1.2]{temam2001navier} it exists $\pi \in L^2(D)$ such that 
\begin{align}\label{distribution equation L2}
    -\Delta u+\nabla\pi=f
\end{align} in the sense of distributions and \begin{align*}
    \lVert  \pi\rVert_{L^2/\mathbb{R}}\leq C \lVert \Delta u\rVert_{H^{-1}}+\lVert f\rVert\lesssim \lVert f\rVert.
\end{align*}
The higher regularity follows by the standard Niremberg's method of finite difference quotients.  Therefore,  fix $h>0$, extending periodically either $u$ and $f$ in the $x$ direction and consider $\phi=\tau_{h}\tau_{-h} u$ as a test function in \eqref{weak formulation 2 auxiliary}, where $\tau_h v=\frac{v(x+h,z)-v(x,z)}{h}.$ Then by change of variables it follows that \begin{align*}
    \lVert \tau_{-h}\nabla u\rVert\leq C\lVert f\rVert.
\end{align*}
The relation above implies that 
\begin{align*}
     \lVert \partial_x\nabla u\rVert\leq C\lVert f\rVert.
\end{align*}
Let us now consider $\phi=\partial_x\psi,\ \psi\in \mathcal{D}(\Dom)$ as test function in \eqref{weak formulation 2 auxiliary}. Therefore arguing as above it follows that $\partial_x \pi\in L^2(\Dom)$ and $\lVert \partial_x\pi\rVert_{L^2}\lesssim \lVert f\rVert $. Since equation \eqref{distribution equation L2} is satisfied in the sense of distribution and $u$ is divergence free it follows that \begin{align*}
    \partial_z \pi=f_2+\partial_{xx}u_2-\partial_{xz}u_1\in L^2(\Dom).
\end{align*}
This implies that $\lVert \nabla\pi\rVert_{L^2}\lesssim \lVert f\rVert.$
Lastly $u_1$ satisfies \begin{align*}\label{elliptic equation auxiliary}
        -\partial_{zz}u_1&=-\partial_x\pi+\partial_{xx}u_1+f_1\in L^2(\Dom),    
\end{align*}
which completes the proof.
\end{proof}

We are ready to prove \autoref{prop:self_adjoint}.

\begin{proof}[Proof of \autoref{prop:self_adjoint}]
\emph{Step 1: $A$ and $B$ are a positive self-adjoint operators}. As above we only discuss the operator $A$. The positivity of $A$ is clear. Next, note that, integrating by parts
\begin{align*}
    \langle A u,v\rangle=\langle u,A v\rangle\quad \forall u,v\in \Do(A).
\end{align*}  
This means that $A$ is symmetric.
It remains to show that $\Do(A^*)=\Do(A)$ and $\forall u\in \Do(A),\ A^*u=Au$.
By definition \begin{align*}
    \Do(A^*)=\{u \in H:\quad & F: D(A)\subseteq H\rightarrow \mathbb R,\quad F(v)=\langle u,Av\rangle\\ & \text{ has a linear bounded extension on } H \}.
    \end{align*} For each $u\in \Do(A^*),\ F(v)=\langle u,Av\rangle=\langle f_u,v\rangle$ therefore $A^*u=f_u$. In particular, $\forall u\in \Do(A^*)\ \langle u,Av\rangle=\langle A^*u,v\rangle$. Thanks to the fact that $A$ is symmetric we have $\Do(A)\subseteq \Do(A^*)$. Given now $v\in \Do(A^*),\ f_v=A^*v\in H$,  let us consider the boundary value problem \eqref{elliptic problem} with forcing term equal to $f_v$. By \autoref{Proposition well posed L2} it has a unique solution $(w,\pi)\in \Do(A)\times H^1(\Dom)$, this implies that $Aw=f_v=A^*v$. For each $z\in H$, let us consider the boundary value problem \eqref{elliptic problem} with forcing term equal to $z$. By \autoref{Proposition well posed L2} it exists a unique $S_z\in \Do(A)$ such that $As_z=z$. Therefore $\langle z,w-v\rangle=\langle As_z,w-v\rangle=\langle s_z,Aw-A^*v\rangle=0$ thanks to the fact that $A$ is symmetric. Since $z$ is arbitrary, then $v=w$ and the claim follows.

\emph{Step 2: Proof of \eqref{eq:kato_characterization}}. We begin by proving the first identity in \eqref{eq:kato_characterization}. Note that $\Do(B^{\gamma})=\Do((B^*)^{\gamma})$ for $\gamma <1/2$ follows from \cite[Theorem 1.1]{K61} and Step 1.  
By Step 1 and \cite[Proposition 10.2.23]{Analysis2}, $B$ has bounded $H^{\infty}$-calculus and in particular $B$ has the bounded imaginary powers property, \cite[Subsection 3.4]{pruss2016moving}. By \cite[
Theorem 3.3.7]{pruss2016moving}, 
$
\Do(B^{\gamma})=[L^2(\Dom;\R^2),\Do(B)]_{\gamma}
$ for all $\gamma<1$. The latter gives $
\Do(B^{\gamma})=H^{2\gamma}(\Dom;\R^2)$ in case $\gamma<1/4$ by \cite{Se72}. The second identity in  \eqref{eq:kato_characterization} follows analogously, where one uses the argument in \cite{FM70} (see also \cite[Proposition 5.5, Chapter 17]{TayPDE3}) to deduce 
$\Do(A^{\gamma})=\Hs^{2\gamma}(\Dom)$ from the first identity in 
\eqref{eq:kato_characterization}.
\end{proof}

\subsection{Bounded $H^{\infty}$-calculus for Laplace operators}
\label{ss:laplace_H}
In this subsection we prove the boundedness of the $H^{\infty}$-calculus for $B_q$. The basic idea is to use the product structure of the domain $\Dom$ and to write $B_{q}u=(\Lr u_1,\Ld u_2)$ where
\begin{align*}
\Do(\Ld)&:=\big\{f\in W^{2,q}(\Dom)\,:\, f|_{\Gamma_b}=f|_{\Gamma_u}=0 \big\}, & & \Ld f:= \Delta f,\\
\Do(\Lr)&:=\big\{f\in W^{2,q}(\Dom)\,:\, \partial_z f|_{\Gamma_b}=0,\  f|_{\Gamma_u}=0 \big\}, & & \Lr f:= \Delta f.
\end{align*}

\begin{proposition}[Bounded $H^{\infty}$-calculus for Laplace operators]
Let $q\in (1,\infty)$ and let $\Dom$ be as above. 
Then $-\Ld$ and $-\Lr$ have a bounded $H^{\infty}$-calculus of angle $0$.
In particular $B_q$ has a bounded $H^{\infty}$-calculus of angle $0$. 
\end{proposition}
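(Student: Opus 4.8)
The plan is to use the product structure $\Dom=\Tor\times(0,a)$ to reduce the boundedness of the $H^\infty$-calculus for $-\Ld$ and $-\Lr$ to the classical corresponding statement for the Laplacian on a flat torus, via explicit reflection arguments in the $z$-variable (this is where the comment preceding the step list applies: reflections are available here, unlike for the Stokes operator). The only external input is the well-known fact, a consequence of the Mikhlin multiplier theorem in the UMD space $L^q$ (see e.g.\ \cite[Chapter 10]{Analysis2}, \cite{pruss2016moving}): for every $q\in(1,\infty)$ and every flat torus, the Laplacian on $L^q$ of that torus, restricted to a closed subspace spanned by Fourier modes whose frequencies are bounded away from the origin, is an invertible sectorial operator with a bounded $H^\infty$-calculus of angle $0$.

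\emph{Step 1 (one-dimensional factors).} Consider $-\partial_z^2$ on $L^q(0,a)$ with either Dirichlet conditions at both endpoints, or a Dirichlet condition at one endpoint and a Neumann condition at the other (the two mixed cases being conjugate via the isometry $z\mapsto a-z$, which commutes with $-\partial_z^2$). In the Dirichlet--Dirichlet case, odd reflection across both endpoints defines a bounded, boundedly invertible extension operator from $L^q(0,a)$ onto the subspace $Y_{\mathrm D}\subseteq L^q(\Tor_{2a})$ of functions odd about the origin, intertwining the given operator with $-\partial_z^2$ on $\Tor_{2a}$. In the mixed case, even reflection across the Neumann endpoint followed by anti-periodic extension of period $2a$ yields a bounded extension onto the subspace $Y_{\mathrm R}\subseteq L^q(\Tor_{4a})$ of functions even about the Neumann endpoint and anti-periodic of period $2a$, again intertwining the operators; one checks that the Neumann and Dirichlet conditions are precisely the compatibility relations making the extension of a $W^{2,q}(0,a)$-function a $W^{2,q}$-function on $\Tor_{4a}$. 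In both cases the corresponding projection is a bounded idempotent on $L^q$ of the torus --- explicitly built from the reflection $z\mapsto -z$ and a half-period translation --- that commutes with the torus Laplacian, so $Y_{\mathrm D}$ and $Y_{\mathrm R}$ are complemented and resolvent-invariant, and by construction their frequencies are bounded away from $0$. Invoking the fact above together with the stability of the bounded $H^\infty$-calculus under passage to complemented, resolvent-invariant subspaces, both one-dimensional operators have a bounded $H^\infty$-calculus of angle $0$.

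\emph{Step 2 (lifting to $\Dom$ and to $B_q$).} Since reflections in $z$ commute with $\partial_x^2$, the same extension operators, acting in the $z$-slot only, identify $-\Ld$ (resp.\ $-\Lr$) with the restriction of $-\Delta$ on the two-dimensional torus $\Tor\times\Tor_{2a}$ (resp.\ $\Tor\times\Tor_{4a}$) to the complemented, resolvent-invariant subspace of functions that are $z$-odd (resp.\ $z$-even about the Neumann edge and $z$-anti-periodic of period $2a$). On these subspaces the $z$-frequency never vanishes, so the torus Laplacian restricts to an invertible operator, and the fact above gives that $-\Ld$ and $-\Lr$ have a bounded $H^\infty$-calculus of angle $0$. (Alternatively, one may keep $x$ and $z$ separated and apply the Kalton--Weis sum theorem to the commuting pair $-\partial_x^2$ and the one-dimensional $z$-operator, both of $H^\infty$-angle $0$ and one of them invertible, cf.\ \cite{pruss2016moving,Analysis2}.) Finally, $B_q$ acts on $L^q(\Dom;\R^2)=L^q(\Dom)\oplus L^q(\Dom)$ diagonally, with one block of $\Lr$-type and one of $\Ld$-type (after the obvious sign normalization); a finite direct sum of operators with a bounded $H^\infty$-calculus of a common angle again has a bounded $H^\infty$-calculus of that angle, so $B_q$ has a bounded $H^\infty$-calculus of angle $0$.

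The step requiring the most care is the mixed Neumann/Dirichlet reflection: one must pin down the correct period $4a$ (not $2a$) and verify that the combined even/anti-periodic extension of a $W^{2,q}(0,a)$-function is $W^{2,q}$ across every identification point of $\Tor_{4a}$, that its Laplacian is precisely the corresponding extension of the original right-hand side, and that the associated symmetry projection is $L^q$-bounded and commutes with the resolvent of the torus Laplacian. The remaining ingredients --- the torus multiplier theorem, the stability of the $H^\infty$-calculus under complemented invariant subspaces (or the Kalton--Weis sum theorem), and the direct-sum statement --- are routine.
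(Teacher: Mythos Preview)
Your proposal is correct and follows essentially the same approach as the paper: reduce via reflection to the periodic torus Laplacian, invoke its bounded $H^\infty$-calculus, and use stability under passage to complemented resolvent-invariant subspaces. The only organizational difference is that for the mixed Neumann/Dirichlet case the paper performs a single even reflection across the Neumann endpoint to obtain the Dirichlet problem on the doubled interval and then invokes the already-established Dirichlet result (thus reaching $\Tor_{4a}$ in two steps), whereas you go to $\Tor_{4a}$ in one combined even/anti-periodic extension; the content is the same.
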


The above statement also holds for the Neumann Laplacian, but it will not be needed below.

\begin{proof}
We divide the proof into three steps. In the first step, we exploit the product structure of our domain to reduce the problem to a one dimensional situation.

\emph{Step 1: Reduction to the 1d case}. 
Then the Dirichlet and the Robin Laplacian in 1d are given by
\begin{align*}
\Do(\Ldd)&:=\big\{f\in W^{2,q}(0,a)\,:\, f(0)=f(a)=0 \big\}, & & \Lrd f:= \partial_x^2 f,\\
\Do(\Lrd)&:=\big\{f\in W^{2,q}(0,a)\,:\, \partial_x f(0)= f(a)=0 \big\}, & & \Lrd f:= \partial_x^2 f.
\end{align*}
Let us consider $\Ldd$, the other case is analogue. In this step we assume that $-\Ldd$ has a bounded $H^{\infty}$-calculus of angle $0$.
Let $\Lt$ be the Laplacian on the periodic torus $\T$ with domain $W^{2,q}(\T)$. The boundedness of the $H^{\infty}$-calculus for $-\Lt$ such operator follows from the periodic version of \cite[Theorem 10.2.25]{Analysis2} and $\angH(\Lt)=0$. 

On $L^2(\Dom)$ considers the operator
\begin{align*}
(\Ldd^{(x)}f)(x,z)= (\Ldd f(\cdot,z))(x), \qquad
(\Lt^{(y)}f)(x,z)= (\Ldd f(x,\cdot))(z),    
\end{align*}
with the corresponding natural domains. It is clear that both $-\Ldd^{(x)}$ and $-\Lt^{(z)} $ have bounded $H^{\infty}$-calculus of angle $0$. Now by sum of commuting operators \cite[Corollary 4.5.8]{pruss2016moving}, the sum operator
$
-A_q:=-\Ldd^{(x)}-\Lt^{(z)}
$
has a bounded $H^{\infty}$-calculus of angle $0$ with domain
\begin{equation*}
\Do(A_q)=\Do(\Ldd^{(x)}) \cap \Do(\Lt^{(z)})= \Do(\Ld)    
\end{equation*}
where the last equality follows from elliptic regularity.

\emph{Step 2: $-\Ld$ has a bounded $H^{\infty}$-calculus of angle $0$.}
By rescaling and translation we may replace $(0,a)$  by $(-\pi,\pi)$. Let $\Lt$ be the Laplacian on the periodic torus $\T=(-\pi,\pi)$ (as measure space) with domain $W^{2,q}(\T)$. Let 
\begin{equation*}
Y:=\big\{f\in L^2(\T)\,:\, f=\sum_{n\geq 0} f_n \sin(n x) \text{ where } (f_n)_{n\geq 1} \in \ell^2  \big\}.    
\end{equation*}
It is clear that $Y\subseteq L^2(\T)$ is closed, and 
\begin{equation*}
(\lambda- \Lt)^{-1}: Y\to Y\ \ \text{ for all }\ \ \lambda\in \rho(\Ld).    
\end{equation*}
Now note that $\Ld$ is the part of $\Lt$ on $Y$, i.e.\
\begin{align*}
\Do(\Ld)
&=\{f\in \Do(\Lt) \cap Y\,:\, \Lt f\in Y\}, \\
 \Ld f
 &= \Lt f \ \ \text{ for all }f \in \Do(\Ld).
\end{align*}
Now the claim of Step 1 follows from \cite[Proposition 10.2.18]{Analysis2} and the periodic version of \cite[Theorem 10.2.25]{Analysis2}.

\emph{Step 2: $-\Lr$ has a bounded $H^{\infty}$-calculus of angle $0$.}
As in the above step, by rescaling we replace $(0,a)$ by $(0,\pi)$. Consider the reflection map
\begin{align*}
 Rf(z):=
\begin{cases}
 f(z) \quad &z\in (0,\pi),\\
 f(-z)\quad &z\in (-\pi,0).
\end{cases}    
\end{align*}
Let $\Ld$ be the Dirichlet Laplacian on $(-\pi,\pi)$. Then one can readily check that $\rho(\Ld)\subseteq \rho(\Lr)$ and for all $\lambda\in \rho(\Ld)$
\begin{equation*}
(\lambda-\Lr)^{-1} f =\big[(\lambda- \Ld)^{-1} Rf \big] \Big|_{(0,\pi)}.    
\end{equation*}
Now the claim follows from Step 1 and the definition of $H^{\infty}$-calculus. 
\end{proof}

\subsection{$\rsec$-sectoriality for the Stokes operator}
\label{ss:maximal_regularity_stokes}
For the notion of $\rsec$-boundedness of a family of linear operators we refer to \cite[Chapter 8]{Analysis2}. For a family of linear operators $\mathscr{J}$, the $\rsec$-bound is denoted by $\rsec(\mathscr{J})$. 
As in \cite[Chapter 10]{Analysis2}, we said that operator $T$ on a Banach space $X$ is said to be $\rsec$-sectorial if there exists $\phi\in (0,\pi)$ such that $\rho(A)\subseteq \{\lambda\in \C\,|\, |\arg \lambda|\geq \pi-\phi \}$ and 
\begin{equation*}
\rsec(\lambda (\lambda-T)^{-1}\,|\, |\arg \lambda|>\pi-\phi )<\infty.    
\end{equation*}
The $\rsec$-sectoriality angle is the infimum over all $\phi\in (0,\pi)$ for which the above holds.
The main result of this subsection reads as follows. 

\begin{proposition}
\label{prop:max_reg_stokes}
For all $q\in (1,\infty)$, the operator $A_q$ is $\rsec$-sectorial with $\rsec$-sectoriality angle $<\pi/2$.
\end{proposition}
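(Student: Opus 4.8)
The plan is to establish $\rsec$-sectoriality of $A_q$ by localizing the parameter-dependent Stokes resolvent problem to model problems on $\R^2$ and on a half-space, patching them with a Neumann-series argument valid for large spectral parameter, and then treating the bounded part of the sector by spectral theory inherited from the Hilbertian case of \autoref{prop:self_adjoint}. Concretely, $\rsec$-sectoriality of $A_q$ with angle $<\pi/2$ is equivalent to the existence of $\omega<\pi/2$ and $\rsec$-bounds, uniform over $\lambda$ in the sector $\Sigma:=\{\lambda\in\C\setminus\{0\}:|\arg\lambda|<\pi-\omega\}$, for the solution operators $f\mapsto\lambda u$ (together with the lower-order maps $f\mapsto\lambda^{1/2}\nabla u$ and $f\mapsto\nabla^2 u$) of
\begin{equation*}
\lambda u-\Delta u+\nabla\pi=f,\quad \operatorname{div}u=0\ \text{ in }\Dom,\qquad u|_{\Gamma_b}=0,\quad u_2|_{\Gamma_u}=\partial_z u_1|_{\Gamma_u}=0,
\end{equation*}
with $f\in L^q(\Dom;\R^2)$; the Helmholtz projection can be dropped from the data since $\pi$ is an unknown.

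First I would fix a finite cover of $\overline{\Dom}$ by balls with a subordinate partition of unity, each ball being either interior or meeting exactly one of the flat pieces $\Gamma_b=\T\times\{0\}$, $\Gamma_u=\T\times\{a\}$. Multiplying the resolvent problem by a cut-off and translating, the localized problem becomes, up to a trivial extension, the Stokes resolvent problem on $\R^2$ (interior balls), on $\R^2_+$ with full Dirichlet conditions (balls near $\Gamma_b$), or on $\R^2_+$ with the perfect-slip condition $u_2=\partial_z u_1=0$ (balls near $\Gamma_u$). The standard nuisance is that the cut-off of a solenoidal field is no longer solenoidal; I would move the resulting divergence $\nabla\varphi\cdot u$ to the right-hand side (through a Bogovski\u\i\ corrector, or by carrying an inhomogeneous divergence through the model estimates) so that all extra terms produced are of lower order in $\lambda$ and in the number of derivatives. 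For the $\R^2$ model problem the Stokes operator is $-\Delta$ on solenoidal fields, hence $\rsec$-sectorial of angle $0$ by Fourier-multiplier techniques. For the Dirichlet half-space, $\rsec$-sectoriality of angle $<\pi/2$ is classical; I would invoke \cite[Section 7.3]{pruss2016moving} (see also \cite[Theorem 7.2.1]{pruss2016moving}). For the perfect-slip half-space I would reduce to $\R^2$ by the reflection $u_1(x,-z)=u_1(x,z)$, $u_2(x,-z)=-u_2(x,z)$, $\pi(x,-z)=\pi(x,z)$: since $f\mapsto(f_1^{\mathrm{ev}},f_2^{\mathrm{odd}})$ is a $\lambda$-independent bounded extension $L^q(\R^2_+)\to L^q(\R^2)$, composing with the whole-space Stokes resolvent and restricting back transfers the $\rsec$-bounds; alternatively one checks the Lopatinskii--Shapiro condition for this boundary operator and applies \cite[Theorem 7.2.1]{pruss2016moving} directly.

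Next I would reassemble the local solution operators against the cut-offs to obtain an approximate resolvent $R_\lambda\colon\Ls^q\to\Do(A_q)$ with $(\lambda-A_q)R_\lambda=I+K_\lambda$ on $\Ls^q$, where $K_\lambda$ collects the commutator and pressure-correction terms; by the lower-order nature of these terms, $\rsec(\{K_\lambda:\lambda\in\Sigma,\ |\lambda|\ge\mu_0\})\to0$ as $\mu_0\to\infty$. A Neumann series (using sub- and multiplicativity of the $\rsec$-bound) then gives, for $\mu_0$ large, $\{\lambda\in\Sigma:|\lambda|\ge\mu_0\}\subseteq\rho(A_q)$ with the required $\rsec$-bounds on $\{\lambda(\lambda-A_q)^{-1}\}$ there (and similarly for the intermediate operators). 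For the complementary bounded part of $\Sigma$: since $\Dom$ is bounded, $\Do(A_q)\hookrightarrow\Ls^q$ is compact, so $\sigma(A_q)$ consists of isolated eigenvalues; by elliptic regularity each eigenfunction is smooth, hence lies in $\Do(A_2)$, so $\sigma(A_q)=\sigma(A_2)\subseteq(0,\infty)$ by \autoref{prop:self_adjoint} and the Poincaré inequality (as in the proof of \autoref{Proposition well posed L2}). Thus $0\in\rho(A_q)$ and $\overline{\Sigma}\subseteq\rho(A_q)$, and on the compact set $\{\lambda\in\overline{\Sigma}:|\lambda|\le\mu_0\}$ the resolvent is holomorphic and therefore defines an $\rsec$-bounded family (see \cite[Chapter 8]{Analysis2}). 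Combining the two regimes yields $\rsec$-sectoriality of $A_q$ with angle $<\pi/2$.

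The main obstacle I anticipate is the localization step: controlling the nonlocal pressure, repairing the broken divergence constraint, and — above all — verifying that every commutator contribution is genuinely of lower order, so that the total error family $\{K_\lambda\}$ has small $\rsec$-bound for $|\lambda|$ large, which is exactly what makes the Neumann series converge in the $\rsec$-sense. Among the model problems the perfect-slip half-space near $\Gamma_u$ is comparatively painless thanks to the reflection, whereas the Dirichlet half-space near $\Gamma_b$ is the piece that genuinely relies on external input from \cite{pruss2016moving}.
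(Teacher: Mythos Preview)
Your proposal is correct and follows the same localization-to-model-problems strategy as the paper, with two differences in execution. First, the paper does not work directly with the $\lambda$-resolvent problem and a Neumann series: instead it proves maximal $L^q$-regularity for the time-dependent Stokes problem \eqref{eq:linear_stokes} via the same localization (invoking \cite[Theorem 7.2.1]{pruss2016moving} for both the no-slip and the pure-slip half-space models, so your reflection shortcut is not needed) and then applies Weis's equivalence \cite{We} to obtain $\rsec$-sectoriality of a shifted operator $\lambda_q+A_q$; this spares one from tracking $\rsec$-bounds through the perturbation argument by hand. Second, to remove the shift (your bounded-$|\lambda|$ regime) the paper does not use the compact-resolvent plus eigenfunction-regularity argument: it instead locates $\sigma(A_q)$ by a Sobolev bootstrap for $q>2$ and duality $(A_q)^*=A_{q'}$ for $q<2$, combined with holomorphy of the resolvent and \cite[Proposition 4.1.12]{pruss2016moving}. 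Both routes ultimately reduce to the Hilbertian spectral information of \autoref{prop:self_adjoint}; your spectral-consistency argument is arguably more transparent, while the paper's maximal-regularity route packages the large-$|\lambda|$ analysis more economically.
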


\begin{proof}
Fix $q\in (1,\infty)$. For simplicity we first prove the statement for a shifted Stokes operator and in a second step we conclude by a simple translation argument. 

\emph{Step 1: There exists $\lambda_q$ such that $\lambda_q+A_q$ is $\rsec$-sectorial with $\rsec$-sectoriality angle $<\pi/2$.} 
Due to the the well-known equivalence of maximal $L^q$-regularity and $\rsec$-sectoriality proven by L.\ Weis \cite{We} (see also \cite[Subsection 4.2, Chapter 3]{pruss2016moving}), it is enough to show that, for all $f\in L^q(0,1;\Ls^q)$, the Stokes problem on $\Dom$,
\begin{equation}
\label{eq:linear_stokes}
\left\{
\begin{aligned}
\partial_t u & =\Delta u-\nabla P +f, &\ \ \ \text{ on }&(0,1)\times\Dom,\\
\mathrm{div}\, u&=0,	&\text{ on }&(0,1)\times\Dom,	\\
u&=0, & \text{ on }&(0,1)\times\Gamma_b, \\  
 u_2&= \partial_z u_1=0,& \text{ on } &(0,1)\times\Gamma_u,\\
 u(0)&=0,	&\text{ on }&\Dom,
\end{aligned}
\right.
\end{equation}
admits a unique solution in the class 
\begin{equation}
\label{eq:max_reg_class}
u\in W^{1,q}(0,1;\Ls^q)\cap L^q(0,1;\Ws^{2,q}(\Dom)), \quad P\in  L^q(0,1;W^{1,q}(\Dom)).
\end{equation}

The proof follows a standard localization argument. Let $(\phi_j)_{j=1}^N$ be a smooth partition of the unity such that, for all $j\in \{1,\dots,N\}$, $\mathrm{diam}(\mathrm{supp}\,\phi_k)<\frac{1}{2}$
\begin{equation*}
\text{ either }\quad  \mathrm{supp}\,\phi_j \cap (\Tor\times \{0\})=\varnothing \quad \text{ or }\quad 
  \mathrm{supp}\,\phi_j\cap (\Tor\times \{a\})=\varnothing.    
\end{equation*}

Fix $k\in \{1,\dots,N\}$. 
Multiplying \eqref{eq:linear_stokes} by $\phi_k$, we obtain for $u_{k}:=\phi_k u$ and $P_k=\phi_k P$ either
\begin{equation}
\label{eq:linear_stokes_1}
\left\{
\begin{aligned}
\partial_t u_k  &=\Delta u_k-\nabla P_k +\phi_k f + \mathcal{L}_k (u,P), & \text{ on }&(0,1)\times\R	\times(0,\infty),\\
\mathrm{div}\, u_k&=\nabla\phi_k \cdot u, & \text{ on }&(0,1)\times \R\times(0,\infty),	\\
u_k&=0, &  \text{ on }&(0,1)\times\R\times \{0\}, \\  
 u_k(0)&=0,	& \text{ on }&\R\times(0,\infty),
\end{aligned}
\right.
\end{equation}
or
\begin{equation}
\label{eq:linear_stokes_2}
\left\{
\begin{aligned}
\partial_t u_k  &=\Delta u_k-\nabla P_k +\phi_k f+  \mathcal{L}_k (u,P), & \text{ on }&(0,1)\times \R\times (-\infty,a),\\
\mathrm{div}\, u_k&=\nabla \phi_k \cdot u,	&\text{ on }&(0,1)\times\R\times (-\infty,a),\\	
 \phi_k u_2&=0, & \text{ on } &(0,1)\times\R\times \{a\},\\
\partial_z (\phi_k u_1)&= u_1 \partial_z \phi_k,& \text{ on } &(0,1)\times\R\times \{a\},\\
 u_k(0)&=0,	&\text{ on }&\R\times (-\infty,a).
\end{aligned}
\right.
\end{equation}
Here $\mathcal{L}_k$ denotes a lower order operator w.r.t.\ to the maximal regularity space for $(u,P)$ in \eqref{eq:max_reg_class}. 

Maximal $L^p(L^q)$-regularity estimates for \eqref{eq:linear_stokes_1} and \eqref{eq:linear_stokes_2} are proven in \cite[Theorem 7.2.1]{pruss2016moving} in the case of no-slip or pure-slip, respectively (see conditions (7.16) and (7.17) on \cite[p.\ 323]{pruss2016moving}). 
Now a-priori estimates for solutions as in \eqref{eq:max_reg_class} in the maximal $L^p(L^q)$-regularity class $W^{1,q}(0,1;\Ls^q)\cap L^q(0,1;\Ws^{2,q}(\Dom))$ follows by repeating the localization argument of \cite[Subsection 3.4 in Chapter 7]{pruss2016moving} to adsorb the lower order terms. 

It remains to discuss the existence of solutions as in \eqref{eq:max_reg_class}. Arguing as in step 3 of \autoref{Regularity of the Neumann map} and using $L^2$-theory, one can prove existence of smooth solutions to equation \eqref{eq:linear_stokes} in case of smooth data $f$. Hence the existence follows from a standard density argument and the a-priori estimates obtained above for solutions in the class $W^{1,q}(0,1;\Ls^q)\cap L^q(0,1;\Ws^{2,q}(\Dom))$.

\emph{Step 2: Conclusion}. By Step 1, it remains to remove the shift $\lambda_q$. 
Arguing as in \cite[Proposition 2.2]{AH21}, by holomorphicity of the resolvent and \cite[Proposition 4.1.12]{pruss2016moving}, it is enough to show that 
\begin{equation*}
\rho(A_q)\subseteq \{\lambda\in \C\,|\, |\arg z|>\psi\}, \quad \text{ for some }\psi<\pi/2.    
\end{equation*}

In the case $q>2$, noticing that $(\lambda- A_q) = (\lambda-\lambda_q-A_q) +\lambda_q $ and that $\rho(\lambda_q+A_q)\subseteq \{\lambda\in \C\,|\, |\arg z|>\phi\}$ for some $\phi<\pi/2$ by Step 1, the conclusion can be obtained by using a standard bootstrap method via Sobolev embeddings. 

In the case $q<2$ one uses $(A_q)^*=A_{q'}$.
\end{proof}

\subsection{Proof of \autoref{t:bounded_H_infty}}
As the proof of \autoref{t:bounded_H_infty} follows the one of \cite[Theorem 16]{KW17_stokes}, we only provide a sketch.

\begin{proof}[Proof of \autoref{t:bounded_H_infty} - Sketch]
\emph{Step 1: There exists $\beta>0$ for which the following estimates hold}:
\begin{equation}
\label{eq:decay_littlewood_paley}
\begin{aligned}
\sup_{1\leq t,s\leq 2} \rsec(\varphi(2^{j+\ell } s A_2) \psi (t2^{j} B_2))&\lesssim 2^{-\beta \ell},\\
\sup_{1\leq t,s\leq 2} \rsec(\varphi(2^{j+\ell }s A_2)^*\p_{p}^*\psi (t2^{j} B_2)^* )&\lesssim 2^{-\beta \ell}.
\end{aligned}
\end{equation}

Recall that $\rsec(\mathscr{J})$ stand for the $\rsec$-bound of the family of operators $\mathscr{J}$, see \cite[Chapter 9]{Analysis2} for details on $\rsec$-boundedness.

By elliptic regularity we have $\p: H^{1}(\Dom;\R^2)\to \Hs^{1}(\Dom)$. Interpolating we obtain 
$\p: H^{s}(\Dom;\R^2)\to \Hs^{s}(\Dom;\R^2)$ for all $s\in (0,1)$. Hence $\p: \Do(B^{\gamma})\to \Do(A^{\gamma})$ for all
$\gamma\in (0,1/4)$. The estimate \eqref{eq:decay_littlewood_paley} now follows from \cite[Proposition 10]{KW17_stokes} and \eqref{eq:kato_characterization}.

\emph{Step 2: Boundedness of the $H^{\infty}$-calculus}. Next we argue as in the proof of \cite[Theorem 5]{KW17_stokes}. 
Let $q\in (1,\infty)$ be as in the statement of \autoref{t:bounded_H_infty} and fix $p\in (q,\infty)$.
By $\rsec$-sectoriality of $A_p$ and $B_p$ (i.e. \autoref{prop:max_reg_stokes}) and \cite[Proposition 10.3.2]{Analysis2}, 
\begin{equation}
\label{eq:rsec_Ap}
\rsec (\varphi(t A_p)\,:\, t>0 )\leq c_0  \quad \text{ and }\quad  \rsec(\psi(s B_p)\,:\, s>0)\leq c_0.
\end{equation}
Note that $(A_r)_{r\in (1,\infty)}$, $(B_r)_{r\in (1,\infty)}$ are consistent family of operators. Hence, by complex interpolation and \cite[Proposition 8.4.4]{Analysis2}, we have that \eqref{eq:decay_littlewood_paley} holds for some $\beta=\beta(r,p)>0$ and with $(A_2,B_2)$ replaced by $(A_q,B_q)$. 
Now the boundedness of the $H^{\infty}$-calculus follows from \cite[Theorem 9]{KW17_stokes}.

\emph{Step 3: Description of the fractional powers}. To obtain the description of the fractional powers of $A_q$ and $B_q$ one can argue as in the proof of \eqref{eq:kato_characterization} by using the bounded imaginary power property and \cite{FM70,Se72}.
\end{proof}

\bibliography{biblio}{}
\bibliographystyle{plain}

\end{document}